\tikzstyle{NE-lines}=[pattern=north east lines, pattern color=black!45]
\DeclareMathOperator{\hatt}{hat}
\DeclareMathOperator{\img}{Im}
\newcommand{\etal}{et~al.}
\mathchardef\mhyphen="2D
\DeclareMathOperator{\End}{End}                
\DeclareMathOperator{\Cay}{Cay}                
\DeclareMathOperator{\Sym}{\mathfrak{S}}       
\DeclareMathOperator{\I}{I}                    
\DeclareMathOperator{\Modinv}{\hat{I}}         
\DeclareMathOperator{\Ascseq}{A}               
\newcommand{\Ah}{\hat{\Ascseq}}
\newcommand{\dA}{\Ascseq_d}                    
\DeclareMathOperator{\Modasc}{\hat{\Ascseq}}   
\newcommand{\dModasc}{\Modasc_d}               
\newcommand{\kAscseq}[1]{\Ascseq_{#1}}         
\newcommand{\kModasc}[1]{\hat{\Ascseq}_{#1}}   
\DeclareMathOperator{\wD}{wD}                  
\DeclareMathOperator{\F}{F}                    
\DeclareMathOperator{\irsub}{D^{\scalebox{.5}{$\nearrow$}}} 
\DeclareMathOperator{\drsub}{D^{\scalebox{.5}{$\searrow$}}} 
\DeclareMathOperator{\Bur}{Bur}                
\DeclareMathOperator{\WI}{WI}                  
\DeclareMathOperator{\fishpattern}{\mathfrak{f}} 
\DeclareMathOperator{\burget}{\mathtt{t}}        
\DeclareMathOperator{\sort}{sort}                
\newcommand{\twoplustwo}{\mathbf{2\hspace{-0.2em}+\hspace{-0.2em}2}}   
\newcommand{\beh}{\hat{\be}}
\newcommand{\tah}{\hat{\tau}}
\newcommand{\omh}{\hat{\omega}}
\newcommand{\hatinv}{h}               
\DeclareMathOperator{\Asc}{Asc}          
\newcommand{\dAsc}{\Asc_d}               
\DeclareMathOperator{\asc}{asc}          
\newcommand{\dasc}{\asc_d}               
\DeclareMathOperator{\wDes}{wDes}        
\DeclareMathOperator{\wdes}{wdes}        
\DeclareMathOperator{\nub}{nub}          
\DeclareMathOperator{\identity}{id}      
\DeclareMathOperator{\mind}{dmin}        
\DeclareMathOperator{\rlMinP}{rlMinP}    
\newcommand{\dhat}{\mathrm{hat}_d}
\newcommand{\khat}[1]{\mathrm{hat}_{#1}}
\newcommand{\maxhat}{\khat{\max}}
\DeclareMathOperator{\Act}{Act}
\DeclareMathOperator{\Ascbot}{AscBot}
\DeclareMathOperator{\indx}{ind}         
\DeclareMathOperator{\A}{A}
\newcommand{\D}{\mathtt{D}}
\newcommand{\U}{\mathtt{U}}
\newcommand{\Path}{\mathcal{P}}
\newcommand{\Q}{\mathcal{Q}}
\newcommand{\QQ}{\mathbb{Q}}
\newtheorem{theorem}{Theorem}[section]
\newtheorem{theorem*}{Theorem}[section]
\newtheorem{proposition}[theorem]{Proposition}
\newtheorem{lemma}[theorem]{Lemma}
\newtheorem{corollary}[theorem]{Corollary}
\newtheorem{problem}[theorem]{Problem}
\newtheorem*{openproblem*}{Open Problem}
\newtheorem{question}[theorem]{Question}
\newtheorem{conjecture}[theorem]{Conjecture}
\theoremstyle{definition}
\newtheorem{definition}[theorem]{Definition}
\newtheorem*{remark*}{Remark}
\newtheorem*{example*}{Example}
\newcommand{\ben}{\begin{enumerate}}
\newcommand{\een}{\end{enumerate}}
\newcommand{\ble}{\begin{lem}}
\newcommand{\ele}{\end{lem}}
\newcommand{\bth}{\begin{thm}}
\renewcommand{\eth}{\end{thm}}
\newcommand{\bpr}{\begin{prop}}
\newcommand{\epr}{\end{prop}}
\newcommand{\bco}{\begin{cor}}
\newcommand{\eco}{\end{cor}}
\newcommand{\bcon}{\begin{conj}}
\newcommand{\econ}{\end{conj}}
\newcommand{\bde}{\begin{defn}}
\newcommand{\ede}{\end{defn}}
\newcommand{\bex}{\begin{exa}}
\newcommand{\eex}{\end{exa}}
\newcommand{\barr}{\begin{array}}
\newcommand{\earr}{\end{array}}
\newcommand{\btab}{\begin{tabular}}
\newcommand{\etab}{\end{tabular}}
\newcommand{\beq}{\begin{equation}}
\newcommand{\eeq}{\end{equation}}
\newcommand{\bea}{\begin{eqnarray*}}
\newcommand{\eea}{\end{eqnarray*}}
\newcommand{\bal}{\begin{align*}}
\newcommand{\bce}{\begin{center}}
\newcommand{\ece}{\end{center}}
\newcommand{\bpi}{\begin{picture}}
\newcommand{\epi}{\end{picture}}
\newcommand{\bpp}{\begin{picture}}
\newcommand{\epp}{\end{picture}}
\newcommand{\bfi}{\begin{figure} \begin{center}}
\newcommand{\efi}{\end{center} \end{figure}}
\newcommand{\bprf}{\begin{proof}}
\newcommand{\eprf}{\end{proof}\medskip}
\newcommand{\bsl}{\begin{slide}{}}
\newcommand{\esl}{\end{slide}}
\newcommand{\bfr}{\begin{frame}}
\newcommand{\efr}{\end{frame}}
\newcommand{\eqqed}[1]{$\rule{1ex}{0ex}\hfill{\dil#1}\hfill\qed$}
\newcommand{\hso}[1]{\hspace{-1pt}}
\newcommand{\emp}{\emptyset}
\newcommand{\sbe}{\subseteq}
\newcommand{\alh}{\hat{\al}}
\newcommand{\case}[4]{\left\{\barr{ll}#1&\mbox{#2}\\#3&\mbox{#4}\earr\right.}
\def\<{\langle}
\def\>{\rangle}
\newcommand{\ra}{\rightarrow}
\newcommand{\al}{\alpha}
\newcommand{\be}{\beta}
\newcommand{\ga}{\gamma}
\newcommand{\de}{\delta}
\newcommand{\ep}{\epsilon}
\newcommand{\si}{\sigma}
\DeclareMathOperator{\id}{id}
\newcommand{\dil}{\displaystyle}
\begin{document}
\title{Modified difference ascent sequences\\and Fishburn structures}
\author[1]{Giulio Cerbai\thanks{G.C. is member of the Gruppo Nazionale Calcolo Scientifico--Istituto Nazionale di Alta Matematica (GNCS-INdAM).}}
\author[1]{Anders Claesson}
\author[2]{Bruce E. Sagan}
\affil[1]{Department of Mathematics, University of Iceland,
Reykjavik, Iceland, \texttt{akc@hi.is}, \texttt{giulio@hi.is}.}
\affil[2]{Department of Mathematics, Michigan State University,
East Lansing, MI 48824-1027, USA, \texttt{sagan@math.msu.edu}}
\date{\today\\[10pt]
	\begin{flushleft}
	\small Key Words: ascent sequence, Burge transpose, difference ascent sequence, Fishburn permutation, modified ascent sequence, permutation patterns, weak ascent sequence
	                                       \\[5pt]
	\small AMS subject classification (2020):  05A19  (Primary) 05A05  (Secondary)
	\end{flushleft}}
\maketitle

\begin{abstract}
  Ascent sequences and their modified version play a central role
  in the bijective framework relating several combinatorial
  structures counted by the Fishburn numbers. Ascent sequences are
 positive integer sequences defined by imposing a bound on the
  growth of their entries in terms of the number of ascents
  contained in the corresponding prefix, while modified ascent
  sequences are the   image of ascent sequences under the so-called
  hat map.
  By relaxing the notion of ascent, Dukes and Sagan have recently
  introduced difference ascent sequences. Here we define modified
  difference ascent sequences and study their combinatorial
  properties. Inversion sequences are a superset of the difference
  ascent sequences and we extend the hat map to this domain.
  Our extension depends on a parameter which we specialize to obtain
  a new set of permutations counted by the Fishburn numbers and
  characterized by a subdiagonality property.
\end{abstract}

\thispagestyle{empty} 

\section{Introduction}

Fishburn structures is a collective term for combinatorial objects
counted by the Fishburn numbers. These numbers appear as sequence A22493
in the OEIS~\cite{oeis} and the $n$th Fishburn number is defined as the
coefficient of $x^n$ in the series
\begin{equation*}
  \sum_{n\geq 0}\prod_{k=1}^n\bigl(1-(1-x)^k\bigr).
\end{equation*}
This generating function first appeared 2001 in a paper by
Zagier~\cite{Z:genfun} concerned with bounds on the dimension of the
space of Vassiliev's knot invariants. Eight years later, Bousquet-Mélou,
Claesson, Dukes and Kitaev~\cite{BMCDK:2fp} proved that this series also
enumerates unlabeled interval orders, thus resolving a long standing open
problem. Peter C.\ Fishburn pioneered the study of interval
orders~\cite{Fis70a, Fis70b, Fis85} and it is in honor of him Claesson
and Linusson~\cite{CL11} named the coefficients of Zagier's series.

Bousquet-M\'{e}lou~{\etal}~\cite{BMCDK:2fp} laid the foundation of a
bijective framework relating interval orders, Stoimenow matchings, and
Fishburn permutations, defined by avoidance of a single bivincular
pattern of length three. To link these objects, as well as to count
them, they introduced an auxiliary set of sequences that embody their
recursive structure more transparently, the ascent sequences.
They defined them as certain nonnegative integer sequences whose
growth of their entries is bounded by the number of ascents contained
in the corresponding prefix. Research into Fishburn structures
(sparked by the work of Bousquet-M\'{e}lou~{\etal}) has
blossomed over the last 15 years.  The structures studied are mostly the
ones previously mentioned but also include Fishburn
matrices~\cite{Fis70b, DP:fmat}, descent correcting
sequences~\cite{CL11} and inversion sequences avoiding the covincular
pattern $
\begin{tikzpicture}[scale=0.21, baseline=6.5pt]
\fill[NE-lines] (0,2) rectangle (3,1);
\draw (0.001,0.001) grid (2.999,2.999);
\filldraw (1,2) circle (5pt);
\filldraw (2,1) circle (5pt);
\end{tikzpicture}\,.  $
Recently, Cerbai and Claesson~\cite{CC:ftree} introduced Fishburn
trees and Fishburn covers to obtain simplified versions of the existing
bijections.

The bijection relating ascent sequences with Fishburn permutations is easy
to describe. Ascent sequences encode the recursive construction
of Fishburn permutations by insertion of a new maximum element.
On the other hand, their relation with $(\twoplustwo)$-free posets is better
expressed in terms of a modified version, that is, their bijective image
under the hat map. Roughly speaking, the hat map goes through the ascent tops
of a given ascent sequence; at each step it increases by one all the entries
in the corresponding prefix that are currently greater than or equal to the current ascent top.
Modified ascent sequences interact better with Fishburn trees too, as they
are simply obtained by reading the labels of Fishburn trees with the
in-order traversal. Further, Fishburn trees arise from the max-decomposition
of modified ascent sequences.
In fact, even though they only appeared marginally in the original
paper~\cite{BMCDK:2fp}, modified ascent sequences have recently assumed
a key role in the understanding of Fishburn structures~\cite{CC:tpb,CC:ftree,Ce:pat1,Ce:pat2}.

In 2023, B\'enyi, Claesson and Dukes~\cite{BCD:was} generalized ascent
sequences to weak ascent sequences. They are defined analogously to the
classical case, but (strict) ascents are replaced with weak ascents.
In the spirit of the original framework, the authors provided
bijections with several classes of matrices, posets and permutations.
Among them, weak ascent sequences encode the active site
construction of weak Fishburn permutations, a superset of Fishburn
permutations defined by avoidance of a single bivincular pattern of
length four.

By relaxing the bound on the growth of the rightmost entry further,
that is, by replacing ascents or weak ascents with difference $d$ ascents, Dukes and
Sagan~\cite{DS:das} arrived at $d$-ascent sequences.
This allowed them to generalize the ascent and weak ascent constructions
whose corresponding combinatorial objects now depended on the paramenter $d$.
They also provided natural injections from $d$-ascent sequences to various structures, for example,
permutations avoiding a bivincular pattern of length $d+3$, leaving
the problem of improving these maps to bijections open.
This was done very recently by Zang and Zhou~\cite{ZZ:dperm}, who
introduced what we will call $d$-Fishburn permutations (they used the term $d$-permutations) and proved that their
recursive structure is embodied by $d$-ascent sequences in the same
way as ascent sequences encode Fishburn permutations.

In this paper, we generalize the hat map to $d$-ascent sequences,
obtaining modified $d$-ascent sequences in the process.
We present a recursive construction of modified $d$-ascent sequences
and use it to study their combinatorial properties.
Our framework is in fact more flexible: it extends to inversion sequences,
a superset of $d$-ascent sequences. Further, our definition of the
hat map depends on a parameter whose specific choices lead to
interesting examples.
Fishburn permutations are obtained~\cite{BMCDK:2fp} by applying the
Burge transpose~\cite{CC:tpb} to modified ascent sequences,
and we prove that the same construction holds for modified
$d$-ascent sequences and $d$-Fishburn permutations.
Finally, we initiate the study of pattern avoidance on $d$-Fishburn
permutations.

We start by giving the necessary tools and definitions in
Section~\ref{section_prel}.

In Section~\ref{section_dmod}, we introduce the $d$-hat map and use it to
define the set of modified $d$-ascent sequences. We then provide a
recursive description of modified $d$-ascent sequences and show in
Proposition~\ref{dhat_is_cayley} that they are Cayley permutations
whose set of indices of left-most copies is equal to the $d$-ascent set of the
unmodified sequence.

Section~\ref{section_props_of_dhat} is devoted to the study of
certain properties of the $d$-hat map. Our main result,
Corollary~\ref{dhat_is_bij}, shows that $d$-hat is injective on
modified $d$-ascent sequences. We then consider which statistics
are preserved by $d$-hat in Section~\ref{section_stats}.

In Section~\ref{section_modinv}, we define modified inversion sequences
and the $\maxhat$ map. We show that, under $\maxhat$, a permutation
corresponds bijectively to the inversion sequence recording its
recursive construction by insertion of a new rightmost maximum value.

This approach is pushed further in Section~\ref{sec_subd}. We restrict the
$\maxhat$ map to ascent sequences and weak descent sequences,
characterizing the corresponding sets of permutations as those that are
subdiagonal in a certain sense.

In Section~\ref{sec_dperm}, we prove that $d$-Fishburn permutations
can be obtained as the bijective image of $d$-ascent sequences
under the composition of the $d$-hat map with the Burge transpose,
lifting a classical result by Bousquet-M\'{e}lou~{\etal}~\cite{BMCDK:2fp}
to any $d\ge 0$.

In Section~\ref{sec_pattav}, we enumerate $d$-Fishburn permutations
avoiding $231$ using a bijection with certain Dyck paths and
the cluster method.

Section~\ref{section_final} contains some final remarks and
suggestions for future work.

\section{Preliminaries}\label{section_prel}


For any nonnegative integer number $n$, let $\End_n$ be the set of \emph{endofunctions},
$\al:[n]\to[n]$, where $[n]=\lbrace 1,2,\ldots, n\rbrace$. We sometimes
identify an endofunction $\al$ with the word $\al=a_1\ldots a_n$, where
$a_i=\al(i)$ for each $i\in[n]$. We will use the convention that Greek
letters will usually be used for sequences and the corresponding Roman
letters will be used for their elements
so, for example, $a_i$ will be
the $i$th element of $\al$ unless otherwise indicated.
Let $\End=\cup_{n\geq 0}\End_n$. In general, given a definition of $E_n$
(of elements of size $n$) we let $E=\cup_{n\geq 0}E_n$. Or, conversely,
given a set $E$ whose elements are equipped with a notion of size, we
will denote by $E_n$ the set of elements in $E$ that have size $n$. 

A \emph{Cayley permutation} is an endofunction $\al$ where
$\img\al=[k]$, for some $k\le n$. In other words, $\al$ is a Cayley
permutation if it contains at least one copy of each integer between~$1$
and its maximum element. The set of Cayley permutations of length $n$ is
denoted by $\Cay_n$. For example, $\Cay_1=\left\lbrace 1\right\rbrace$,
$\Cay_2=\left\lbrace 11,12,21\right\rbrace$ and
$$
\Cay_3=
\left\lbrace 111,112,121,122,123,132,211,212,213,221,231,312,321 \right\rbrace.
$$
There is a well-known one-to-one correspondence between ordered
set partitions and Cayley permutations: The Cayley permutation
$\al=a_1\ldots a_n$ encodes the ordered set partition if $[n]$ into subsets $B_1\ldots B_k$ where $k=\max\alpha$
and $i\in B_{a_i}$ for every $i\in [n]$.

An endofunction $\al\in\End_n$ is an \emph{inversion sequence} if
$a_i\le i$ for each $i\in[n]$. We let $\I_n$ denote the set of inversion
sequences of length $n$. For example,
$$
\I_1=\{1\},\quad\I_2=\{11,12\},\quad
\I_3=\left\lbrace 111,112,113,121,122,123\right\rbrace.
$$


Let $\al:[n]\to [n]$ be an endofunction. We call $i\in [n]$ an
\emph{ascent} of $\al$ if $i=1$ or $i\ge2$ and
$$
a_i>a_{i-1}.
$$ 
We define the {\em ascent set} of $\al$ to be
$$
\Asc\al=\{i\in[n] \mid \text{$i$ is an ascent of $\al$}\}
$$ 
and
$$
\asc\al=\#\Asc\al
$$
where, for any set $S$, $\#S$ denotes the cardinality of $S$.
Note that our conventions differ
from some others in the literature in that we are taking the indices
of ascent tops, rather than bottoms, and that $1$ is always an ascent
which is done for the purpose of simplifying the definition of an ascent sequence.
It will sometimes be convenient to order $\Asc\al$ and other similar sets below increasingly to obtain the {\em ascent list}
$$
\Asc\al=(i_1,i_2,\ldots,i_k),
$$
where $k=\asc\al$.  Our notation will not distinguish between the set and its sequence.

From now on, let $\al_i=a_1\ldots a_i$ denote the prefix of $\al$ of length~$i$. Call $\al$ an \emph{ascent sequence} if for all $i\in[n]$ we have
$$
a_i \le 1 + \asc\al_{i-1}.
$$
Note that when $i=1$ we have $a_1\le 1+\asc\epsilon=1$, where~$\epsilon$
denotes the empty sequence. Since the entries of $\al$ are positive
integers, this forces $a_1=1$.
Let $\Ascseq_{0}$ be the set of ascent sequences and let $\Ascseq_{0,n}$
denote the set of ascent sequences of length~$n$. For instance,
$$
\Ascseq_{0,3} = \{111, 112, 121, 122, 123\}.
$$
Clearly, every $\al\in\Ascseq_{0,n+1}$ is of the form $\al=\be a$,
where $\be\in\Ascseq_{0,n}$ and $1\le a\le 1+\asc\be$.
Note that $\Ascseq_{0,n}\subseteq\I_n$. On the other hand, some ascent
sequences are not Cayley permutations, the smallest example of which is
$12124$. Note also that we depart slightly from the original definition
of ascent sequences~\cite{BMCDK:2fp} and other papers on the topic
in that our sequences use the positive, rather than nonnegative, integers.
The reason is that we want to bring all the
families of sequences considered in this paper under the umbrella of
endofunctions of $[n]$ so as to relate them with Cayley permutations and inversion sequences.

The set $\Modasc_0$ of \emph{modified ascent sequences}~\cite{BMCDK:2fp}
is the bijective image of $\Ascseq_0$ under the $\al\mapsto\alh$ mapping,
defined as follows. Given an ascent sequence $\al$, let
$$
M(\al,j)=\al^+,\text{ where }\al^+(i)=a_i+
\begin{cases}
1 & \text{if $i<j$ and $a_i\geq a_{j}$,}\\
0 & \text{otherwise,}
\end{cases}
$$
and extend the definition of $M$ to multiple indices $j_1$,$j_2$,\dots,$j_k$
by
$$
M(\al,j_1,j_2,\dots,j_k)= M\bigl(M(\al,j_1,\dots,j_{k-1}),j_k\bigr).
$$
Then
$$
\alh=M(\al,\Asc\al),
$$
where in this context $\Asc\al$ is the ascent list of~$\al$.
For example, if $\al=121242232$, then $\Asc\al=(1,2,4,5,8)$ and we get
the following where at each stage the entry governing the modification
is underlined while the entries which are modified italicized:
\begin{align*}
  \al              &= 121242232 \\
  M(\al,1)         &= \underline{1}21242232\\
  M(\al,1,2)       &= 1\underline{2}1242232\\
  M(\al,1,2,4)     &= 1\mathbf{3}1\underline{2}42232\\
  M(\al,1,2,4,5)   &= 1312\underline{4}2232\\
  M(\al,1,2,4,5,8) &= 1\mathbf{4}12\mathbf{5}22\underline{3}2=\alh
\end{align*}
More informally, to determine $\alh$, we scan the ascents of $\al$
from left to right; at each step, every element strictly to the left of
and weakly larger than the current ascent top  is incremented by one.
The construction described above can easily be inverted 
since $\Asc\al=\Asc\alh$.
Thus the
mapping $\Ascseq_0 \rightarrow\Modasc_0$ by $\al \mapsto \alh$ is a bijection.

It is easy to turn this into a definition of $\Modasc_0$ which is recursive
by length and will be given later (see Definition~\ref{def_rec_modasc}).
Finally, in~\cite{CC:tpb} it was proved that
\begin{equation}\label{eq_modasc_char}
\Modasc_0=\lbrace \al\in\Cay\mid \Asc\al=\nub\al\rbrace,
\end{equation}
where
$$
\nub\al = \{\min \al^{-1}(j) \mid 1\leq j\leq \max\al \}
$$
is the set of positions of leftmost copies.
The term ``nub'' comes from a Haskell function that removes duplicate
elements from a list, keeping only the first occurrence of each
element. One may also think of nub as a short for ``not used before.''
Interestingly, the nub (under the name ``sequence of first occurrences'')
has recently appeared in an entirely different context as part of the
work of Liang and Sagan~\cite{ls:lclc} on proving log-concavity and
log-convexity results using distributive lattices.

Equation~\eqref{eq_modasc_char}
can be equivalently expressed in terms of Cayley-mesh patterns,
introduced by the first author~\cite{Ce:sort}, as
$$
\Modasc_0\;=\;\Cay\left(
\begin{tikzpicture}[scale=0.50, baseline=19pt]
\fill[NE-lines] (2,0) rectangle (3,3);
\draw [semithick] (0,0.85) -- (4,0.85);
\draw [semithick] (0,1.15) -- (4,1.15);
\draw [semithick] (0,1.85) -- (4,1.85);
\draw [semithick] (0,2.15) -- (4,2.15);
\draw [semithick] (1,0) -- (1,3);
\draw [semithick] (2,0) -- (2,3);
\draw [semithick] (3,0) -- (3,3);
\filldraw (1,2) circle (5pt);
\filldraw (2,1) circle (5pt);
\filldraw (3,2) circle (5pt);
\end{tikzpicture},
\begin{tikzpicture}[scale=0.50, baseline=19pt]
\fill[NE-lines] (1,0) rectangle (2,3);
\fill[NE-lines] (0,0.85) rectangle (0.85,1.15);
\draw [semithick] (0,0.85) -- (3,0.85);
\draw [semithick] (0,1.15) -- (3,1.15);
\draw [semithick] (0,1.85) -- (3,1.85);
\draw [semithick] (0,2.15) -- (3,2.15);
\draw [semithick] (1,0) -- (1,3);
\draw [semithick] (2,0) -- (2,3);
\filldraw (1,2) circle (5pt);
\filldraw (2,1) circle (5pt);
\end{tikzpicture}
\right).
$$
In the above pair of forbidden Cayley-mesh patterns, the
leftmost one indicates an ascent that is not a leftmost
copy; and the one on the right stands for a leftmost
copy that is not an ascent.
Unlike $\Ascseq_0$, not every modified ascent sequence is
an inversion sequence. For instance, the modified ascent sequence
of $\al=1212$ is $\alh=1312$.


Dukes and Sagan~\cite{DS:das} have recently introduced \emph{difference $d$
ascent sequences}. Let $\al\in\End_n$. Given a nonnegative number $d\ge 0$,
we call $i\in[n]$ a \emph{$d$-ascent} if
$i=1$ or $i\ge2$ and
$$
a_i>a_{i-1} - d.
$$
As with ordinary ascents, we have the {\em $d$-ascent set} (or \emph{list})
$$
\dAsc\al=\{ i\in [n] \mid \text{$i$ is a $d$-ascent of $\al$}\}.
$$
and {\em $d$-ascent number}
$$
\dasc\al=\#\dAsc\al.
$$
Note that a $0$-ascent is simply an ascent, while a $1$-ascent is what
is called a {\em weak ascent}:
$$
a_i>a_{i-1}-1\,\iff\, a_i\ge a_{i-1}.
$$
The analogue of the definition of an ascent sequence in the weak case
is as expected.
Call $\al$ a \emph{$d$-ascent sequence} if for all $i\in[n]$ we have
$$
a_i \le 1 + \asc_d\al_{i-1}.
$$
Once again, the above restriction forces $a_1=1$. From now on, denote by
$\kAscseq{d,n}$ the set of $d$-ascent sequences of length $n$.
Clearly, for $d=0$ we recover the set of ascent sequences,
while for $d=1$
we obtain the set of weak ascent sequences of B\'enyi
{\etal}~\cite{BCD:was}.
Note also that $\dAsc\al\subseteq\Asc_{d+1}\al$ for each $d$,
from which the chain of containments
\begin{equation}\label{chain_dasc}
\kAscseq{0}\subseteq\kAscseq{1}\subseteq\kAscseq{2}\subseteq\kAscseq{3}\subseteq\cdots
\end{equation}
follows immediately.

We now connect $d$-ascent sequences and inversion sequences.
\begin{lemma}\label{dasc_iff_invseq}
We have
$$
\I=\bigcup_{d\ge 0}\kAscseq{d}.
$$
\end{lemma}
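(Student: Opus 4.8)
The plan is to establish the set equality by proving the two inclusions separately, working at a fixed length $n$ throughout since both sides are graded by length and the union is taken levelwise.

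First I would dispose of the inclusion $\bigcup_{d\ge 0}\kAscseq{d}\subseteq\I$, which is immediate from the definitions. Fixing $d\ge 0$ and $\al\in\kAscseq{d,n}$, I would observe that $\dAsc\al_{i-1}\subseteq[i-1]$, so that $\dasc\al_{i-1}\le i-1$ for every $i\in[n]$. Feeding this into the defining inequality $a_i\le 1+\dasc\al_{i-1}$ yields $a_i\le i$, which is exactly the condition for $\al$ to be an inversion sequence. Hence $\kAscseq{d}\subseteq\I$ for each $d$, and the inclusion survives taking the union.

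For the reverse inclusion $\I\subseteq\bigcup_{d\ge 0}\kAscseq{d}$, the idea is that inflating $d$ turns \emph{every} index into a $d$-ascent. Given $\al\in\I_n$, its entries satisfy $1\le a_i\le i\le n$, so $a_{i-1}-a_i\le n-1$ for all $i\ge 2$. Choosing $d=n$ (or indeed any $d$ exceeding every difference $a_{i-1}-a_i$) forces $a_i>a_{i-1}-d$ for every $i\ge 2$; together with the convention that $i=1$ is always a $d$-ascent, this makes every index of $\al$ a $d$-ascent, whence $\dAsc\al_{i-1}=[i-1]$ and $\dasc\al_{i-1}=i-1$ for all $i$. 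The defining bound then reads $a_i\le 1+(i-1)=i$, which holds precisely because $\al$ is an inversion sequence. Thus $\al\in\kAscseq{n,n}$, so $\al$ lies in the union.

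The only step requiring a little care is the reverse inclusion, and specifically pinning down a value of $d$ that makes all indices $d$-ascents; here the boundedness of inversion-sequence entries by $n$ is exactly what permits a uniform choice of $d$ for each fixed sequence. Everything else is a direct unwinding of the definitions, and the chain of containments~\eqref{chain_dasc} confirms that once a working $d$ is found, membership persists for all larger $d$, consistent with passing to the union.
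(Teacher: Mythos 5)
Your proposal is correct and follows essentially the same route as the paper: the inclusion $\kAscseq{d}\subseteq\I$ falls out of the bound $\dasc\al_{i-1}\le i-1$, and the reverse inclusion is proved by showing $\I_n\subseteq\kAscseq{n,n}$, using the fact that entries of an inversion sequence of length $n$ are too close together for any index to fail to be an $n$-ascent. The only cosmetic difference is that you bound the differences $a_{i-1}-a_i$ directly, while the paper bounds $a_{i-1}\le n-1$ and compares with $a_i\ge 1$; both yield the same conclusion with the same choice $d=n$.
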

\begin{proof}
We will prove that each side of the equality is contained in the other.
We first show that $\kAscseq{d}\sbe\I$ for all $d\ge0$ which will give one
of the desired inclusions. If $\al=a_1 \ldots a_n\in\kAscseq{d}$ then $a_1=1$
as required for an inversion sequence. For $i\ge 2$, we have
$$
a_i\le 1+\dasc\al_{i-1}\le 1+(i-1)=i.
$$
Thus $\al\in\I$.

For the other direction, it suffices to show that $\I_n\sbe\kAscseq{n,n}$.
So take $\al=a_1 \ldots a_n\in\I_n$. We have $a_1=1$ as
needed. And for $i\ge2$ we have $a_{i-1}\le i-1 \le n-1$. Hence
$a_i > -1\ge a_{i-1}-n$. Thus every index $i\ge2$ is an $n$-ascent and so
$$
a_i\le i = 1+\asc_n\al_{i-1}
$$
showing that $\al\in\kAscseq{n,n}$.
\end{proof}

We can now calculate the cardinality of certain $\A_{d,n}$.
\begin{proposition}\label{card_d+3}
For all $d\ge0$ we have
$$
\#\A_{d,n}=
\begin{cases}
  n!         & \text{if $n\le d+2$,}\\
  (d+3)!-d!  & \text{if $n=d+3$.}
\end{cases}
$$
\end{proposition}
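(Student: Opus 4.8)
The plan is to leverage the containment $\A_{d,n}\subseteq\I_n$ established in Lemma~\ref{dasc_iff_invseq}, together with the elementary count $\#\I_n=n!$ (an inversion sequence satisfies $1\le a_i\le i$, so each entry admits exactly $i$ choices). With this in hand, the whole proposition reduces to deciding which inversion sequences of the relevant length fail to be $d$-ascent sequences, and it turns out this is governed entirely by the last few entries.

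The key observation I would record first is that in any inversion sequence, every index $j$ with $2\le j\le d+1$ is automatically a $d$-ascent: since $a_{j-1}\le j-1\le d$ we get $a_{j-1}-d\le 0<1\le a_j$, so $a_j>a_{j-1}-d$. Consequently, for any index $i\le d+2$ the prefix $\al_{i-1}$ has all of its positions $1,\dots,i-1$ as $d$-ascents, whence $\asc_d\al_{i-1}=i-1$ and the defining inequality $a_i\le i=1+\asc_d\al_{i-1}$ holds for free. When $n\le d+2$ this applies to every $i\in[n]$, giving $\I_n\subseteq\A_{d,n}$; combined with the reverse containment we obtain $\A_{d,n}=\I_n$ and hence $\#\A_{d,n}=n!$, which settles the first case.

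For $n=d+3$ the same observation shows that the $d$-ascent-sequence condition can fail only at the last index $i=d+3$, so I would count the sequences $\al\in\I_{d+3}$ with $a_{d+3}>1+\asc_d\al_{d+2}$. Here $\asc_d\al_{d+2}$ equals $d+1$ plus $1$ exactly when position $d+2$ is itself a $d$-ascent. If it is, then $\asc_d\al_{d+2}=d+2$ and failure would need $a_{d+3}>d+3$, impossible for an inversion sequence; so failure forces position $d+2$ \emph{not} to be a $d$-ascent. A short computation (using $1\le a_{d+2}$ and $a_{d+1}\le d+1$) shows this happens precisely when $a_{d+1}=d+1$ and $a_{d+2}=1$, in which case $\asc_d\al_{d+2}=d+1$ and the failure condition simplifies to $a_{d+3}>d+2$, i.e.\ $a_{d+3}=d+3$.

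Thus the inversion sequences that are \emph{not} $d$-ascent sequences are exactly those with $a_{d+1}=d+1$, $a_{d+2}=1$, and $a_{d+3}=d+3$, with the initial segment $a_1\dots a_d$ an arbitrary inversion sequence of length $d$; there are $d!$ of these, so $\#\A_{d,d+3}=(d+3)!-d!$. The only delicate point is the case analysis isolating this unique failure pattern at the final coordinate; once it is pinned down the count is immediate, and one can sanity-check the formula against $\#\A_{0,3}=6-1=5$, matching the list $\{111,112,121,122,123\}$ (the removed sequence being $113$).
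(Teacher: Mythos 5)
Your proposal is correct and follows essentially the same route as the paper: both reduce to counting the inversion sequences that fail the $d$-ascent condition, note that positions up to $d+1$ are automatically $d$-ascents (so failure can only occur at the last index when $n=d+3$), and then pin down the unique failure pattern $a_{d+1},a_{d+2},a_{d+3}=d+1,1,d+3$ with an arbitrary inversion sequence of length $d$ in front, giving the count $(d+3)!-d!$. The only cosmetic difference is that you organize the second case as an explicit dichotomy on whether position $d+2$ is a $d$-ascent, whereas the paper reaches the same conclusion by contradiction.
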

\begin{proof}
By the previous lemma $\A_{d,n}\sbe\I_n$. Since $\#I_n=n!$, to prove
the first statement of the proposition, it suffices to show that if
$n\le d+2$ then every inversion sequence of length $n$ is a $d$-ascent
sequence.

Let $\al\in\I_n$ where $n\le d+2$.  We claim that for every proper
prefix $\al_i$, $i\le d+1$, we have $\dAsc\al_i=[i]$.
Indeed, consider any element $a_j\in\al_i$. Then, since~$\al$ is an
inversion sequence, 
$$
a_{j-1}\le j-1\le i-1\le d.
$$
Also $a_j\ge1$. So $a_{j-1}-a_j\le d-1<d$, which forces $j\in\dAsc\al_i$,
proving the claim. Now, for all $a_k\in \al$ we have 
$$
a_k\le k = 1 + \asc_d\al_{k-1},
$$
hence $\al$ is a $d$-ascent sequence, as desired.
To prove the second part of the proposition, we just need to show
that when $n=d+3$ there are exactly $d!$ elements of $\I_{d+3}$ which
are not $d$-ascent sequences.

Let $\al=a_1\ldots a_{d+3}$ be such a sequence. We show that the last
three entries of~$\al$ are
$$
a_{d+1},a_{d+2},a_{d+3}=d+1,1,d+3,
$$
while the prefix $\be=a_1\ldots a_d$ can be any inversion sequence of
size~$d$. If we had $\dAsc(\be a_{d+1}a_{d+2})=[d+2]$, then, using an
argument like that of the previous paragraph, we would have~$\al\in\dA$,
which is a contradiction. On the other hand, it follows from the proof
of the first part that $\dAsc(\be a_{d+1})=[d+1]$. So it must be that
$d+2\not\in\dAsc(\be a_{d+1}a_{d+2})$, i.e. $a_{d+2}\le a_{d+1}-d$.
Together with the fact that $a_{d+1}\le d+1$ and $a_{d+2}\ge 1$, this
forces
$$
a_{d+1}=d+1
\quad\text{and}\quad
a_{d+2}=1.
$$
Now, since we assumed that~$\al$ is not a $d$-ascent sequence, but we know that its
prefix $\be a_{d+1}a_{d+2}$ is, it must be that
$$
a_{d+3}>\asc(\be a_{d+1}a_{d+2}) +1 = d+2.
$$
Since $\al\in\I_{d+3}$ we also have $a_{d+3}\le d+3$. It follows that
there is only one choice for the last element of $\al$, namely $a_{d+3}=d+3$.
In the end, we have $\al=\be (d+1)1(d+3)$, where~$\be$ is any inversion
sequence of size~$d$. Since there are $d!$ choices for such~$\be$,
the proposition is proved.
\end{proof}

\section{Modified \texorpdfstring{$d$}{d}-ascent sequences}\label{section_dmod}

We wish to extend the hat map $\al\mapsto\alh$, originally defined on
$\Ascseq_0$, to the set $\A_d$.
Let $\al\in\kAscseq{d}$, for some $d\ge 0$.
The \emph{$d$-hat} of $\al$ is defined as
$$
\dhat(\al)=M\bigl(\al,\dAsc\al\bigr),
$$
where $\dAsc\al$ is the $d$-ascent list of~$\al$.
To illustrate, suppose $d=2$.  Then it is easy to check that 
$\al=12131532$ is a $2$-ascent sequence with
$$
\Asc_2(\al)=(1,2,3,4,6,8).
$$
So, using the same notation as for the example computing $\alh$ in the ascent sequence case,
\begin{align*}
  \al              &= 12131532 \\
  M(\al,1)         &= \underline{1}2131532\\
  M(\al,1,2)       &= 1\underline{2}131532\\
  M(\al,1,2,3)     &= \mathbf{2}\mathbf{3}\underline{1}31532\\
  M(\al,1,2,3,4)   &=2\mathbf{4}1\underline{3}1532\\
M(\al,1,2,3,4,6) &= 24131\underline{5}32\\
  M(\al,1,2,3,4,6,8) &= \mathbf{35}1\mathbf{4}1\mathbf{64}\underline{2} = \hatt_2(\al).
\end{align*}

The $d$-hat map is a natural generalization of the hat map, obtained
by replacing ascents with $d$-ascents. As a special case, we have
$\khat{0}(\al)=\alh$ for each $\al\in\Ascseq_0$.
More generally, to compute $\dhat(\al)$ scan the $d$-ascents of
$\al$ from left to right; at each step, increment by one every element
strictly to the left of and weakly larger than the current $d$-ascent top.
From now on, given $d\ge 0$, we let
$$
\dModasc=\dhat(\dA)
$$
denote the set of \emph{modified $d$-ascent sequences}.

Let us set up some standard notation we shall use throughout the rest
of this paper. We will consider $d$-ascent sequences $\al=\be a$,
where~$a$ is the last letter of~$\al$ and~$\be$ is a $d$-ascent
sequence of size one less than~$\al$. If~$d$ is clear from context,
we let $\alh=\dhat(\al)$ and $\beh=\dhat(\be)$. We also use ``$+$'' as a
superscript that denotes the operation of adding one to the entries
$c\ge a$ of a given sequence, where~$a$ is a threshold determined by
the context. For instance, we denote by $\beh^+a$ the sequence obtained
by adding one to each entry of~$\beh$ that is greater than or equal to~$a$. Clearly, letting~$b$ denote the last letter of~$\be$,
by definition of $\dhat$ we have for every $n\ge 1$ and
$\al\in\kAscseq{d,n}$ 
\begin{equation}
\label{notation1}
\alh=
\case{\beh a}{if $a\le b-d$,}{\beh^+a}{if $a>b-d$.}
\end{equation}
Finally, we will denote
the entries of the above sequences by
\begin{equation}
\label{notation2}
\begin{array}{lllll}
\al=a_1\ldots a_n, && \alh=a'_1\ldots a'_n,\\
\be=b_1\ldots b_{n-1}, && \beh=b'_1\ldots b'_{n-1}, && \beh^+=b''_1\ldots b''_{n-1},
\end{array}
\end{equation}
where~$n$ is the size of~$\al$. The behavior of~$\dhat$ on the last two
letters of $\al\in\dA$ is described more explicitly in the next lemma.

\begin{lemma}\label{final}
Let $\al=a_1\ldots a_n\in\dA$, for some $d\ge 0$ and $n\ge 2$. Let
$\dhat(\al)=\alh=a'_1\ldots a'_n$. Then
$$
a'_{n-1},a'_n=
\begin{cases}
a_{n-1}+1,a_n & \text{if $a_{n-1}-d < a_n \le a_{n-1}$;}\\
a_{n-1},a_n & \text{otherwise.}
\end{cases}
$$
\end{lemma}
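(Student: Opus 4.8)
The plan is to track how the two rightmost entries, at positions $n-1$ and $n$, evolve as we apply the modifying map $M$ successively at each $d$-ascent of $\al$, scanned from left to right. The single fact driving the whole argument is that $M(\cdot,j)$ alters only entries at positions strictly to the left of $j$: by definition it adds one to $a_i$ precisely when $i<j$ and $a_i$ is at least the current value sitting at position $j$, leaving position $j$ and everything to its right untouched.

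First I would observe that the last entry is never modified. Every $d$-ascent $j$ of $\al$ lies in $[n]$, so $j\le n$; since $M(\cdot,j)$ changes nothing at positions $\ge j$, it in particular never changes position $n$. Hence $a'_n=a_n$, and moreover the value at position $n$ equals $a_n$ at every stage of the computation. Next, the entry at position $n-1$ can only be affected when we process a $d$-ascent $j$ with $j>n-1$, that is, when $j=n$; any $d$-ascent $j\le n-1$ modifies only positions $i<j\le n-1$ and so leaves position $n-1$ alone. Consequently the value at position $n-1$ remains equal to $a_{n-1}$ right up until the (at most one) moment at which we process $j=n$.

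It then remains to analyze the single step $j=n$. This step occurs exactly when $n$ is a $d$-ascent, i.e.\ when $a_n>a_{n-1}-d$. If it does occur, then at that instant position $n-1$ still holds $a_{n-1}$ and position $n$ still holds $a_n$, so $M(\cdot,n)$ increments position $n-1$ iff $a_{n-1}\ge a_n$. Combining the two conditions gives $a_{n-1}-d<a_n\le a_{n-1}$, which is exactly the first case of the lemma and yields $a'_{n-1}=a_{n-1}+1$. In the complementary case the increment fails to happen: either $a_n>a_{n-1}$, so that $n$ is a $d$-ascent but the threshold test $a_{n-1}\ge a_n$ is false, or $a_n\le a_{n-1}-d$, so that $n$ is not a $d$-ascent and the step $j=n$ never occurs; in both sub-cases $a'_{n-1}=a_{n-1}$.

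There is no serious obstacle here; the only point requiring care---and the real content of the argument---is the bookkeeping claim that, at the instant we reach $j=n$, positions $n-1$ and $n$ still carry their original values $a_{n-1}$ and $a_n$. Once this is pinned down via the left-only nature of $M$, the case distinction of the lemma drops out directly from the definition of a $d$-ascent and of the threshold comparison inside $M$.
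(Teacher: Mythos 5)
Your proof is correct and rests on the same mechanism as the paper's: since applying $M$ at an index modifies only entries strictly to its left, the only step that can change position $n-1$ is the one at $j=n$, which occurs exactly when $n$ is a $d$-ascent (i.e.\ $a_n>a_{n-1}-d$), and the increment there happens iff $a_{n-1}\ge a_n$ — precisely the case split in the lemma. The paper packages this as an induction using its recursion $\alh=\beh a_n$ (no $d$-ascent at $n$) versus $\alh=\beh^+a_n$ ($d$-ascent at $n$), while you unroll the iterated definition of $M$ over the $d$-ascent list directly, but this is the same argument in substance.
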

\begin{proof}
We use induction on the size of~$\al$. Let $\al=\be a_n$.
The last element of $\alh$ is~$a_{n}$ by definition of $\dhat$.
Similarly, the last letter of $\beh=\dhat(\be)$ is $a_{n-1}$.

Suppose initially that $a_n>a_{n-1}-d$. Then~$n$ is a $d$-ascent and
so $\alh=\beh^+ a_n$. Now, if $a_{n-1}\ge a_n$ then $a'_{n-1}=a_{n-1}+1$
and $\alh$ ends with $a_{n-1}+1,a_n$. Otherwise, if $a_{n-1}<a_n$ then $a_{n-1}$ will not be incremented and~$\alh$ ends with $a_{n-1},a_n$.

Finally, if $a_n\le a_{n-1}-d$ then~$n$ is not a $d$-ascent.
So in this case $\alh=\beh a_n$ and the last two elements
are $a_{n-1},a_n$ again.
\end{proof}

Our next goal is to provide a recursive definition of~$\dModasc$
which does not depend on constructing $\dA$ first.
In the classical case, such a
definition of $\kModasc{0}$ is as follows~\cite{CC:tpb}, where
we use $\alh$ and $\beh$ to denote generic elements of~$\Ah_0$.
Note that this definition permits the computation of an element $\alh$ in $\hat{\A}_0$ directly from a given $\beh$ in $\hat{\A}_0$ without needing to know $\al$ itself.
\begin{definition}\label{def_rec_modasc}
We have $\kModasc{0,0}=\{\epsilon\}$ and $\kModasc{0,1}=\{1\}$.
Let $n\ge 2$. Then every $\alh\in\kModasc{0,n}$ is of one of two forms
depending on whether the last letter forms an ascent with the
penultimate letter:
\begin{itemize}
\item $\alh = \beh a$ \,and\, $1\leq a \leq b$, or
\item $\alh = \beh^+ a$ \,and\, $b< a \leq 1+\asc\beh$,
\end{itemize}
where $\beh\in\kModasc{0,n-1}$ and the last letter of $\beh$ is $b$.
\end{definition}

We wish to highlight a detail that explains why the definition given above
is consistent with letting $\alh=M\bigl(\al,\Asc\al\bigr)$.
Given $\al\in\Ascseq_0$, to compute $\alh$ we increase entries in the
current prefix if and only if we encounter an ascent of $\al$. On the
other hand, Definition~\ref{def_rec_modasc} is stated directly in terms
of the ascents of the modified sequence, i.e. in terms of $\Asc\beh$.
Since it is known~\cite{BMCDK:2fp} that
\begin{equation}\label{eq_asc0_pres}
\Asc\al=\Asc\khat{0}(\al),
\end{equation}
i.e. the ascent set is preserved under the hat map, these two
approaches are in fact equivalent.

In the same spirit, we wish to give a recursive definition of $\dModasc$.
The problem in generalizing Definition~\ref{def_rec_modasc} is that
in general the $d$-ascent set, as well as its cardinality, is not preserved
under $\dhat$. For instance, for $d=1$ we have $\khat{1}(11)=21$ and
$$
\{1,2\}=\Asc_1(11)\neq\Asc_1(21)=\{1\}.
$$
A suggestion for an alternative approach comes from the classical
case $d=0$. Let $\al\in\kAscseq{0}$ and let $\alh=\khat{0}(\al)$.
Then (see~\cite[Theorem 7.3]{CC:tpb} and~\cite[Section 4.1]{BMCDK:2fp},
respectively),
\begin{equation}\label{asc0_max}
\Asc\al=\nub\alh
\quad\text{and}\quad
\asc\al=\max\alh.
\end{equation}
In fact, the corresponding equalities hold for every $d\ge 0$,
as we show in the next proposition.

\begin{proposition}\label{dhat_is_cayley}
Given $d\ge 0$, let $\al\in\dA$ and let $\alh=\dhat(\al)$.
Then $\alh$ is a Cayley permutation with
$$
\dAsc\al=\nub\alh
\quad\text{and}\quad
\dasc\al=\max\alh.
$$
\end{proposition}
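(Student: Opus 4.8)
The plan is to establish all three assertions at once by induction on the length $n$ of $\al$, feeding on the recursive description \eqref{notation1} of $\dhat$. Write $\al=\be a$ with $\be\in\kAscseq{d,n-1}$, let $b$ be the last letter of $\be$ (equivalently of $\beh$, since $\dhat$ leaves the final position untouched), and set $k=\dasc\be$. The induction hypothesis is that $\beh$ is a Cayley permutation with $\dAsc\be=\nub\beh$ and $\dasc\be=\max\beh=k$; the base cases $n\le 1$ hold since $\dhat(\epsilon)=\epsilon$ and $\dhat(1)=1$. A useful preliminary remark is that $\dhat$ never decreases an entry, so $b\le\max\beh=k$. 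I would then split on whether the new position $n$ is a $d$-ascent, that is, on whether $a>b-d$, exactly as in \eqref{notation1}.

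In the non-$d$-ascent case $a\le b-d$ we have $\alh=\beh a$, and here $a\le b-d\le b\le k$, so the appended letter is a value already occurring in $\beh$ (whose image is $[k]$). Thus $\beh a$ remains a Cayley permutation with unchanged maximum $k$, and since $n$ is not a $d$-ascent we get $\dasc\al=\dasc\be=k=\max\alh$. Moreover position $n$ is not a leftmost copy and the earlier positions are untouched, so $\nub\alh=\nub\beh=\dAsc\be=\dAsc\al$.

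The $d$-ascent case $a>b-d$ carries the real content. Now $\alh=\beh^+a$, where $\beh^+$ increments every entry $\ge a$ of $\beh$, and $\al\in\dA$ forces $a\le 1+\dasc\be=k+1$. The key observation is that $x\mapsto x+[x\ge a]$ is an order isomorphism from $[k]$ onto $[k+1]\setminus\{a\}$; hence $\img\beh^+=[k+1]\setminus\{a\}$, and appending the letter $a$ exactly fills the single missing value. Therefore $\alh$ is a Cayley permutation with $\max\alh=k+1=\dasc\be+1=\dasc\al$. Because the same order isomorphism preserves all coincidences among entries, $\nub\beh^+=\nub\beh$ as sets of positions; and since $a\notin\img\beh^+$, position $n$ is the new leftmost copy of the value $a$. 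Hence $\nub\alh=\nub\beh\cup\{n\}=\dAsc\be\cup\{n\}=\dAsc\al$, closing the induction.

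The case analysis itself is routine; the step I expect to be the crux is the gap-filling observation in the $d$-ascent case, namely that incrementing the entries $\ge a$ of the Cayley permutation $\beh$ deletes precisely the value $a$ from its image, which the final letter then restores. This single fact keeps the three statements coupled: it simultaneously promotes $\beh$ to a Cayley permutation of maximum $k+1$, matches the unit increase of $\dasc$ caused by the new $d$-ascent at position $n$, and makes position $n$ the leftmost copy demanded by $\nub$. The only other point needing care is the bound $a\le\max\beh$ in the non-$d$-ascent case, which prevents the appended letter from introducing a spurious maximum or a spurious leftmost copy.
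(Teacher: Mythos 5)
Your proof is correct and follows essentially the same route as the paper: induction on the length of $\al$, splitting on whether the final position is a $d$-ascent via \eqref{notation1}, with the crux being that incrementing the entries $\ge a$ of $\beh$ removes exactly the value $a$ from the image, which the appended letter restores. The only (cosmetic) difference is that you treat the $d$-ascent case uniformly through the order isomorphism $x\mapsto x+[x\ge a]$, where the paper separates the sub-cases $a=\max\beh+1$ and $a\le\max\beh$.
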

\begin{proof}
We use induction on the size of~$\al$. It is easy to see that the
statement holds if~$\al$ has length zero or one.
Let $n\ge 2$ and let $\al\in\kAscseq{d,n}$. As usual, let $\al=\be a$,
for some $\be\in\kAscseq{d,n-1}$ and $1\le a\le 1+\dasc\be$.
By induction, $\beh=\dhat(\be)$ is a Cayley permutation with
$\dAsc\be=\nub\beh$ and $\dasc\be=\max\beh$.
Following the definition of $\dhat$, we consider two possibilities
according to whether or not~$a$ forms a $d$-ascent with the last
letter~$b$ of~$\be$.
\begin{itemize}
\item Suppose $a\le b-d$. Then $\alh=\beh a$. Note that
$\alh\in\Cay_{n}$ since $\beh\in\Cay_{n-1}$ and
$a\le b-d\le\max\be$. Furthermore,
\begin{align*}
\dAsc\al&=\dAsc\be=\nub\beh=\nub\alh\\
\shortintertext{and}
\dasc\al&=\dasc\be=\max\beh=\max\alh.
\end{align*}
\item Suppose $a>b-d$. Then $\alh=\beh^+a$. Once again, it is
easy to see that $\alh\in\Cay_{n}$ as follows.

First note that by the definition of $d$-ascent sequence and induction we have
$$
a\le \asc_d\be +1 =\max\beh  +1.
$$
If $a=\max\beh+1$, 
then
$\beh^+=\beh$ and
$$
\img\alh=\img\beh\cup\{a\}=[\max\beh+1]=[\max\alh].
$$
On the other hand, if $a\le\max\beh$, then the
only gap created in $\beh^+$ (by lifting the entries $c\ge a$)
is filled by~$a$. More formally,
\begin{align*}
\img\alh&=\img\beh^+\cup\{a\}\\
&=\{1,2,\dots,a-1\}\cup\{a+1,a+2,\dots,\max(\beh)+1\}\cup\{a\}\\
&=[\max\beh+1]\\
&=[\max\alh].
\end{align*}
Finally,
\begin{align*}
\dAsc\al&=\dAsc\be\uplus\{n\}\\
&=\nub\beh\uplus\{n\}\\
&=\nub\beh^+\uplus\{n\}\\
&=\nub\alh,\\
\shortintertext{where the last equality follows since~$a$ is a leftmost copy in $\alh$, and}
\dasc\al&=\dasc\be+1=\max\beh+1=\max\alh.
\end{align*}
\end{itemize}
This finishes the proof of the proposition.
\end{proof}


The equality $\dasc\be=\max\dhat(\be)$ proved in
Proposition~\ref{dhat_is_cayley} leads us to the following
recursive definition of modified $d$-ascent sequences, where we use
the same notational conventions as in Definition~\ref{def_rec_modasc}.
\begin{definition}\label{def_rec_dinv}
Let $d\ge 0$ be a nonnegative integer. Let $\kModasc{d,0}=\{\epsilon\}$
and $\kModasc{d,1}=\{1\}$. Suppose $n\ge 2$. Then every
$\alh\in\kModasc{d,n}$ is of one of two forms depending on whether the
last letter forms a $d$-ascent with the penultimate letter:
\begin{itemize}
\item $\alh = \beh a$ \,and\, $1\leq a \leq b-d$, or
\item $\alh = \beh^+ a$ \,and\, $b-d< a \leq 1+\max\beh$,
\end{itemize}
where $\beh\in\kModasc{d,n-1}$ and the last letter of $\beh$ is $b$.
\end{definition}
The reader will immediately realize that the previous definition is
obtained by replacing $\asc\beh$ with $\max\beh$ in
Definition~\ref{def_rec_modasc}. When $d=0$, the two definitions
are equivalent by equation~\eqref{asc0_max}.
Modified $d$-ascent sequences are built recursively by
insertion of a new rightmost entry~$a$, which is at most equal to one plus
the current maximum; the parameter~$d$ determines the cases where the
prefix is rescaled (by adding one to each entry $c\ge a$).
For convenience, the analogous definitions of $\dA$ and $\dModasc$
are illustrated below:
\begin{align*}
(\dA)\quad\al &=\be a,\ 1\le a\le 1+\dasc\be.\\
(\dModasc)\quad\alh &=\begin{cases}
\beh a,\ 1\le a\le b-d;\\
\beh^+a,\ b-d<a\le 1+\max\beh.
\end{cases}
\end{align*}
The equality $\dasc\be=\max\dhat(\be)$ acts as a bridge between the
two definitions.

Let us end this section with a remark. In general, the set $\Ah_{d,n}$
is not included in $\Ah_{d+1,n}$. For instance, we have
\begin{align*}
\Ah_{0,2}&=\{\khat{0}(11),\khat{0}(12)\}=\{11,12\}
\shortintertext{and}
\Ah_{1,2}&=\{\khat{1}(11),\khat{1}(12)\}=\{21,12\}.
\end{align*}

\section{Properties of \texorpdfstring{$d$}{d}-hat}\label{section_props_of_dhat}

We devote this section to the study of several aspects related
to the $d$-hat map just introduced.  Recall that $\hatt_d$ is a map whose domain is the set of $d$-ascent sequences.

Recall from Proposition~\ref{dhat_is_cayley} that
$\nub\dhat(\al)=\dAsc\al$. When $d=0$, using equation~\eqref{eq_asc0_pres}
we obtain the equality
$$
\nub\khat{0}(\al)=\Asc\khat{0}(\al)
$$
characterizing~$\Ah_0$ as a subset of $\Cay$ (see
equation~\eqref{eq_modasc_char}). Since we have established in
Proposition~\ref{dhat_is_cayley} that $\dModasc\subseteq\Cay$ for every
$d\ge 0$, a natural question arises:
\begin{center}
\emph{Is there an analogous equality characterizing $\dModasc$
when $d\ge 1$?}
\end{center}
As mentioned before Proposition~\ref{dhat_is_cayley}, the equality
$\dAsc\dhat(\al)=\dAsc\al$ does not hold for $d\ge 1$. However, we
show in Proposition~\ref{dasc_cont_nub} that one inclusion holds.
First, a simple lemma.

\begin{lemma}\label{dasc_beplus}
Let $\be\in\End$ and let $\be^+$ be the result of increasing every element of $\be$ which is at least $a$ by $1$ for some $a\ge 0$.
Then for all $d\ge0$
$$
\dAsc\be^+\subseteq\dAsc\be
\quad\text{and}\quad
\Asc\be^+=\Asc\be.
$$
\end{lemma}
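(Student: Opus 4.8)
The plan is to reduce both assertions to the elementary behaviour of the single-entry relabelling that produces $\be^+$ from $\be$. Writing $\be=b_1\ldots b_m$, the sequence $\be^+$ is obtained by applying to each entry the map $\phi$ given by $\phi(t)=t+1$ if $t\ge a$ and $\phi(t)=t$ otherwise, so that $\be^+=\phi(b_1)\ldots\phi(b_m)$. Since the index $1$ is, by our convention, a $d$-ascent (and an ascent) of every sequence, it lies in all four sets under consideration; it therefore suffices to compare memberships for indices $i\ge 2$, where the relevant condition involves only the consecutive pair $(b_{i-1},b_i)$ and its image $(b_{i-1}^+,b_i^+)=(\phi(b_{i-1}),\phi(b_i))$.

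First I would record the two properties of $\phi$ that drive everything. As a map on the integers, $\phi$ simply omits the value $a$ from its range, so it is strictly increasing; in particular $b_i>b_{i-1}$ holds if and only if $\phi(b_i)>\phi(b_{i-1})$. Moreover, a one-line check of the cases ``$x,y$ both $<a$'', ``both $\ge a$'', and ``$x<a\le y$'' shows that for integers $x\le y$ one has $\phi(y)-\phi(x)\in\{\,y-x,\ y-x+1\,\}$, and in particular $\phi(y)-\phi(x)\ge y-x$. In words, $\phi$ preserves strict order but may enlarge a gap by one when the two values straddle the threshold $a$.

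The equality $\Asc\be^+=\Asc\be$ is then immediate from strict monotonicity: for $i\ge 2$, the index $i$ is an ascent of $\be$ exactly when $b_i>b_{i-1}$, which by the first property holds exactly when $\phi(b_i)>\phi(b_{i-1})$, i.e.\ exactly when $i$ is an ascent of $\be^+$. For the containment $\dAsc\be^+\subseteq\dAsc\be$ I would argue by contraposition. Suppose $i\ge 2$ is not a $d$-ascent of $\be$, so that $b_i\le b_{i-1}-d$, and in particular $b_i\le b_{i-1}$ since $d\ge 0$. Applying the gap inequality with $x=b_i$ and $y=b_{i-1}$ gives $\phi(b_{i-1})-\phi(b_i)\ge b_{i-1}-b_i\ge d$, that is $b_i^+\le b_{i-1}^+-d$, so $i$ is not a $d$-ascent of $\be^+$ either. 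Taking contrapositives yields $\dAsc\be^+\subseteq\dAsc\be$.

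The only conceptual point worth flagging is \emph{why} the second relation is a containment while the first is an equality, and this is where a little care is needed rather than any genuine obstacle. Enlarging a gap can only destroy a $d$-ascent at a weak-descent position, by pushing the gap $b_{i-1}-b_i$ from below $d$ up to at least $d$, and can never create one; at genuine ascent positions the gap stays positive and the index remains a $d$-ascent on both sides. This asymmetry is exactly what produces the one-sided inclusion. The proof is short, and the estimate $\phi(y)-\phi(x)\ge y-x$ for $x\le y$ is the single fact doing all the work.
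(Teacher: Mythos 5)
Your proof is correct, and at its core it rests on the same elementary observation as the paper's: everything reduces to how adding one to the entries $\ge a$ affects a single consecutive pair $(b_{i-1},b_i)$, with index $1$ handled by convention. The organization, however, is genuinely different in two respects. For the containment $\dAsc\be^+\subseteq\dAsc\be$, the paper argues by contradiction: assuming $i\in\dAsc\be^+$ but $i\notin\dAsc\be$, it shows the two defining inequalities force $b'_{i-1}=b_{i-1}$ and $b'_i=b_i+1$, hence the straddle $b_{i-1}<a\le b_i$, and then derives $b_i<b_i$. You instead isolate the relabelling map $\phi$ and the forward estimate $\phi(y)-\phi(x)\ge y-x$ for $x\le y$, from which the contrapositive ($b_i\le b_{i-1}-d$ implies $b'_i\le b'_{i-1}-d$) is immediate; this replaces case-pinning plus contradiction by a one-line direct inequality. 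Second, the logical dependencies differ: the paper obtains $\Asc\be^+\subseteq\Asc\be$ as the $d=0$ instance of the containment and then proves only the reverse inclusion directly, whereas you get both inclusions of $\Asc\be^+=\Asc\be$ simultaneously from strict monotonicity of $\phi$, independently of the containment. Your version buys modularity and brevity, and it makes transparent why one relation is an equality and the other only an inclusion (gaps can grow but never shrink). The paper's version buys an explicit description of the unique configuration in which a $d$-ascent can be destroyed, namely the straddle $b_{i-1}<a\le b_i$ at a weak-descent position --- a phenomenon you note too, but as a remark rather than as a byproduct of the argument; this is the configuration responsible for examples like $\khat{1}(11)=21$, where the inclusion is strict.
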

\begin{proof}
Let $\be=b_1\ldots b_n$ and $\be^+=b'_1\ldots b'_n$, where $b'_i=b_i$,
if $b_i<a$, and $b'_i=b_i+1$, if $b_i\ge a$. Note that the first
position $i=1$ is a $d$-ascent by definition. On the other hand, let $i\ge 2$
and suppose that $i\in\dAsc\be^+$. We show that $i\in\dAsc\be$.
For a contradiction, suppose that $i\notin\dAsc\be$. More explicitly, we
have
\begin{align*}
i\notin\dAsc\be &\iff b_i\le b_{i-1}-d;\\
i\in\dAsc\be^+  &\iff b'_i>b'_{i-1}-d.
\end{align*}
Comparing the two inequalities forces $b'_i=b_i+1$ and $b'_{i-1}=b_{i-1}$. Therefore, we have
$b_{i-1}<a\le b_i$ and
$$
b_i\le b_i+d\le b_{i-1}<a\le b_i,
$$
which gives us the desired contradiction.

By the previous part of the proposition (and since an ascent is a
$0$-ascent), to prove the remaining equality $\Asc\be^+=\Asc\be$ we only
need to show that $\Asc\be^+\supseteq\Asc\be$.
Let $i\in\Asc\be$. If $i=1$, then $i\in\Asc\be^+$. If instead $i\ge 2$,
then $b_i>b_{i-1}$ and thus~$b_i$ will be increased in~$\be^+$ if~$b_{i-1}$
is increased. In any case, we have $b'_i>b'_{i-1}$, hence $i\in\Asc\be^+$.
This completes the proof.
\end{proof}

\begin{corollary}\label{dasc_hat_subset}
Let $d\ge 0$. Suppose that $\al\in\dA$ and let $\alh=\dhat(\al)$.
Then
$$
\dAsc\alh\subseteq\dAsc\al.
$$
\end{corollary}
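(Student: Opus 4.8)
The plan is to prove the inclusion by induction on the length $n$ of $\al$, feeding the recursive description of $\dhat$ from equation~\eqref{notation1} into the already-established Lemma~\ref{dasc_beplus}. The base cases $n\le 1$ are immediate, since then $\alh=\al$ and hence $\dAsc\alh=\dAsc\al$.

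For the inductive step I would write $\al=\be a$ with $\be\in\kAscseq{d,n-1}$ and let $b$ denote the last letter of $\be$. The key preliminary observation is that $\dhat$ never alters the last entry of a sequence, because the operator $M(\cdot,j)$ only increments positions strictly to the left of $j$; consequently the last letter of $\beh$ is again $b$, and by equation~\eqref{notation1} the first $n-1$ entries of $\alh$ coincide with those of $\beh$ when $a\le b-d$, and with those of $\beh^+$ when $a>b-d$. Since whether an interior position $i\le n-1$ is a $d$-ascent depends only on the length-$(n-1)$ prefix, this yields $\dAsc\alh\cap[n-1]=\dAsc\beh$ in the first case and $\dAsc\alh\cap[n-1]=\dAsc\beh^+$ in the second, while on the $\al$ side the analogous bookkeeping gives $\dAsc\al\cap[n-1]=\dAsc\be$.

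In the branch $a\le b-d$, the last two letters of $\alh$ are $b,a$ with $a\le b-d$ (this is the ``else'' clause of Lemma~\ref{final}), so position $n$ is not a $d$-ascent of either $\al$ or $\alh$. Combining $n\notin\dAsc\alh$ with the induction hypothesis $\dAsc\beh\subseteq\dAsc\be$ then gives $\dAsc\alh=\dAsc\beh\subseteq\dAsc\be=\dAsc\al$. In the branch $a>b-d$, position $n$ \emph{is} a $d$-ascent of $\al$, so the desired inclusion holds at $n$ automatically, irrespective of whether $n\in\dAsc\alh$; for the interior positions I would simply chain Lemma~\ref{dasc_beplus}, which gives $\dAsc\beh^+\subseteq\dAsc\beh$, with the induction hypothesis $\dAsc\beh\subseteq\dAsc\be$, obtaining $\dAsc\alh\cap[n-1]=\dAsc\beh^+\subseteq\dAsc\be=\dAsc\al\cap[n-1]$.

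The substantive input is entirely Lemma~\ref{dasc_beplus}: it is precisely the assertion that passing from $\beh$ to $\beh^+$ can only destroy interior $d$-ascents, never create them, which is what makes the $\beh^+$ branch go through. I do not expect a genuine obstacle here; the only point demanding care is the boundary position $n$, and it is benign in both branches — it is excluded from both sets when $a\le b-d$, and it belongs to $\dAsc\al$ for free when $a>b-d$. The remaining effort is the routine verification that $d$-ascents at interior positions are governed solely by the length-$(n-1)$ prefix.
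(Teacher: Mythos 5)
Your proof is correct and follows essentially the same route as the paper: induction on the length of $\al$, a case split on whether the last letter forms a $d$-ascent with $b$, and, in the $\beh^+$ branch, chaining Lemma~\ref{dasc_beplus} with the induction hypothesis $\dAsc\beh\subseteq\dAsc\be$. Your explicit bookkeeping separating the interior positions $[n-1]$ from the boundary position $n$ is a slightly more careful rendering of the paper's containment chain $\dAsc\alh\subseteq\dAsc\beh^+\cup\{n\}\subseteq\dAsc\be\cup\{n\}=\dAsc\al$, but the substance is identical.
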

\begin{proof}
We use induction on the size~$n$ of $\al$, taking the case $n\le 1$ for
granted. Assume $n\ge 2$. Let $\al=\be a$, where $\be\in\kAscseq{d,n-1}$
and $1\le a\le 1+\dasc\be$, and let $\beh=\dhat(\be)$. As usual, we
consider two cases according to whether or not the last letter~$b$
of~$\be$ forms a $d$-ascent with~$a$.

Suppose first that $1\le a\le b-d$. Then $\alh=\beh a$, where
$\dAsc\beh\subseteq\dAsc\be$ by induction. Now by Definition~\ref{def_rec_dinv}
$$
\dAsc\alh=\dAsc\beh\subseteq\dAsc\be=\dAsc\al.
$$

Otherwise, suppose that $b-d<a\le 1+\dasc\be$. Then
$\alh=\beh^+a$. 
Now using Lemma~\ref{dasc_beplus} and induction we have
$$
\dAsc\alh = \dAsc\beh^+\cup\{n\}
\subseteq\dAsc\beh\cup\{n\} 
\subseteq\dAsc\be\cup\{n\} 
=\dAsc\al.
$$
This completes the demonstration.
\end{proof}

Combining Proposition~\ref{dhat_is_cayley} and Corollary~\ref{dasc_hat_subset} immediately gives the following result.

\begin{proposition}\label{dasc_cont_nub}
Let $d\ge 0$.  We have, for any $\alh\in\dModasc$,

\eqqed{
\dAsc\alh\subseteq\nub\alh.
}
\end{proposition}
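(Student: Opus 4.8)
The plan is to chain together the two preceding results exactly as the transition sentence anticipates. The crucial observation is purely definitional: since $\dModasc=\dhat(\dA)$, any element $\alh\in\dModasc$ arises as $\alh=\dhat(\al)$ for some genuine $d$-ascent sequence $\al\in\dA$. This is what licenses the simultaneous use of both earlier statements, each of which is phrased for a pair $(\al,\alh)$ with $\al\in\dA$ and $\alh=\dhat(\al)$.

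First I would fix such a representative $\al\in\dA$ with $\alh=\dhat(\al)$. Corollary~\ref{dasc_hat_subset} then applies directly and gives
$$
\dAsc\alh\subseteq\dAsc\al,
$$
bounding the $d$-ascent set of the modified sequence by that of the original. Next, Proposition~\ref{dhat_is_cayley} identifies this right-hand side with the set of positions of leftmost copies of $\alh$, namely $\dAsc\al=\nub\alh$. Substituting this equality into the previous containment yields
$$
\dAsc\alh\subseteq\dAsc\al=\nub\alh,
$$
which is precisely the claimed inclusion, and the argument closes.

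There is essentially no obstacle here, as all the real work has been front-loaded into Corollary~\ref{dasc_hat_subset} (whose own proof is the inductive argument built on Lemma~\ref{dasc_beplus}) and into Proposition~\ref{dhat_is_cayley}. The only point requiring a moment's care is bookkeeping: one of the two facts is a containment $\dAsc\alh\subseteq\dAsc\al$ and the other an equality $\dAsc\al=\nub\alh$, so I must compose them in the correct order so as to land on $\nub\alh$ itself. Once the representative $\al$ is chosen, the conclusion is immediate, which is why the statement is recorded as a corollary rather than a full proposition with a self-contained proof.
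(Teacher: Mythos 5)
Your proposal is correct and follows exactly the paper's own argument: the paper also obtains the result by combining Corollary~\ref{dasc_hat_subset} ($\dAsc\alh\subseteq\dAsc\al$) with Proposition~\ref{dhat_is_cayley} ($\dAsc\al=\nub\alh$), using that every $\alh\in\dModasc$ has a preimage $\al\in\dA$ under $\dhat$. The only trivial discrepancy is your closing remark that the statement is ``recorded as a corollary''---the paper labels it a proposition, though, as you observe, it is stated with an immediate justification rather than a self-contained proof.
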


\subsection{Injectivity of \texorpdfstring{$\dhat$}{d-hat}}

Our next goal is to prove that $d$-hat is injective on~$\dA$ for
every $d\ge 0$. Let $\al\in\I$ be an inversion sequence.
By Lemma~\ref{dasc_iff_invseq},
the quantity
$$
\mind\al=\min\{d\ge 0\mid\al\in\dA\}
$$
is a nonnegative integer for every $\al$. Furthermore, by
equation~\eqref{chain_dasc} if~$\al$ is a $d$-ascent sequence for
some~$d$, then it is a $k$-ascent sequence for every $k\ge d$.
It is natural to study the set
$$
H(\al)=\{\dhat(\al)\mid d\ge\mind\al\}
$$
of all the (meaningful) $d$-hats of $\al$. Note that
$H(\al)\subseteq\Cay$ by Proposition~\ref{dhat_is_cayley}.
Next, we show that $H(\al)$ is finite.

\begin{lemma}\label{size_of_H}
Let $\al\in\I_n$. Then $\mind\al\le n$. Further,
we have $\dhat(\al)=\khat{n}(\al)$ for each $d\ge n-1$. 
\end{lemma}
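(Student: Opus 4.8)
The plan is to show that once $d\ge n$ the parameter $d$ no longer affects either the membership condition or the modification procedure, because every index of $\al$ has already become a $d$-ascent. I would begin with the bound $\mind\al\le n$, which amounts to checking $\al\in\kAscseq{n}$. This is essentially contained in the proof of Lemma~\ref{dasc_iff_invseq}, where it is shown that $\I_n\subseteq\kAscseq{n,n}$: for $\al\in\I_n$ and $i\ge 2$ we have $a_{i-1}\le i-1\le n-1$ and $a_i\ge 1$, so $a_i> a_{i-1}-n$, making every index an $n$-ascent. Hence $\asc_n\al_{i-1}=i-1$ and $a_i\le i=1+\asc_n\al_{i-1}$, so $\al\in\kAscseq{n}$ and therefore $\mind\al\le n$.

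For the second claim, the crux is the elementary observation that for every $d\ge n$ the full set $[n]$ is the $d$-ascent set of $\al$. Index $1$ is always a $d$-ascent, and for $i\ge 2$ the inversion-sequence bound gives
$$
a_{i-1}-a_i\le (i-1)-1\le n-2<n\le d,
$$
so $a_i>a_{i-1}-d$ and $i$ is a $d$-ascent. Consequently $\dAsc\al=[n]=\Asc_n\al$ as increasing lists, for every $d\ge n$. This is exactly the stabilization one expects: the relaxation built into the $d$-ascent condition is already saturated at $d=n$.

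Finally I would appeal to the definition of the modification operator. Since $M(\al,j_1,\dots,j_k)$ depends only on the sequence $\al$ and on the list of indices $j_1,\dots,j_k$, and since the preceding step shows $\dAsc\al=\Asc_n\al$ for all $d\ge n$, we conclude
$$
\dhat(\al)=M(\al,\dAsc\al)=M(\al,\Asc_n\al)=\khat{n}(\al).
$$
I do not anticipate a genuine obstacle here: the entire content is the estimate $a_{i-1}-a_i<d$, which forces every position to be a $d$-ascent. The only point that deserves an explicit sentence is that $d$ enters the $d$-hat map \emph{solely} through the ascent list, so that equal ascent lists produce identical outputs; once this is noted, the equality $\dhat(\al)=\khat{n}(\al)$ is immediate.
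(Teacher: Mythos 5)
Your proposal is correct and follows essentially the same route as the paper: both establish $\mind\al\le n$ via the containment $\I_n\subseteq\kAscseq{n,n}$ from the proof of Lemma~\ref{dasc_iff_invseq}, then observe that $\dAsc\al=\Asc_n\al=[n]$ for all $d\ge n$, so that $\dhat(\al)=\khat{n}(\al)$ follows from the definition of $d$-hat. The only difference is that you spell out the elementary inequality $a_{i-1}-a_i\le n-2<d$ and the remark that $d$ enters the map only through the ascent list, both of which the paper leaves implicit.
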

\begin{proof}
Recall from the proof of Lemma~\ref{dasc_iff_invseq} that
$\I_n\subseteq\kAscseq{n,n}$. The inequality $\mind\al\le n$
follows immediately. Finally, let $d\ge n-1$. Then
$$
\dAsc\al=\Asc_{n-1}\al=[n]
$$
and the equality $\dhat(\al)=\khat{n-1}(\al)$ follows directly from
the definition of $d$-hat.
\end{proof}

By Lemma~\ref{size_of_H}, we have
$$
H(\al)=
\left\lbrace\dhat(\al)\mid\mind\al\le d\le|\al|\right\rbrace,
$$
from which the following corollary is obtained immediately.

\begin{corollary}\label{H_finite}
Let $\al$ be an inversion sequence. Then $H(\al)$ is finite.
\hfill{\qed}
\end{corollary}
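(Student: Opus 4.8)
The plan is to obtain this corollary as an immediate consequence of Lemma~\ref{size_of_H}, which already carries the entire substance of the argument. Writing $n=|\al|$, the set $H(\al)=\{\dhat(\al)\mid d\ge\mind\al\}$ is indexed by the infinitely many integers $d\ge\mind\al$ for which $\al$ is a $d$-ascent sequence. The key observation is that the map $d\mapsto\dhat(\al)$ is eventually constant: Lemma~\ref{size_of_H} gives $\dhat(\al)=\khat{n}(\al)$ for every $d\ge n$. Hence every index $d>n$ produces the same Cayley permutation $\khat{n}(\al)$ already contributed by $d=n$, and no such index enlarges $H(\al)$.

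With this in hand I would record the finite description
$$
H(\al)=\left\lbrace\dhat(\al)\mid\mind\al\le d\le n\right\rbrace,
$$
which holds because $\mind\al\le n$ by the same lemma, so the displayed range is nonempty and captures every distinct value. The right-hand side is the image of the finite set of integers $\{\mind\al,\mind\al+1,\dots,n\}$ under the map $d\mapsto\dhat(\al)$, and the image of a finite set under any map is finite; indeed $\#H(\al)\le n-\mind\al+1\le n+1$. This yields the claim.

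There is essentially no obstacle here: all the genuine work---the uniform bound $\mind\al\le n$ and the stabilization $\dhat(\al)=\khat{n}(\al)$ for $d\ge n$---has been discharged in Lemma~\ref{size_of_H}. The only remaining point is the bookkeeping observation that an infinite index set collapses to finitely many outputs, which is exactly the eventual-constancy statement recalled above.
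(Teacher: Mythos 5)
Your proposal is correct and follows exactly the paper's route: it invokes Lemma~\ref{size_of_H} to get the stabilization $\dhat(\al)=\khat{n}(\al)$ for $d\ge n$ together with the bound $\mind\al\le n$, and then rewrites $H(\al)$ as the image of the finite index set $\{\mind\al,\dots,n\}$, which is precisely the display the paper records just before the corollary. Nothing is missing; the explicit cardinality bound $\#H(\al)\le n-\mind\al+1$ is a harmless small addition.
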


Let us now prove that the sets $H(\al)$ are disjoint. The injectivity
of $\dhat$ over $\dA$ will immediately follow as a corollary.

\begin{proposition}\label{Hset_disj}
Let $\al$ and $\si$ be inversion sequences and suppose that
$H(\al)\cap H(\si)\neq\emptyset$. Then $\al=\si$.
\end{proposition}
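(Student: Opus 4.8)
The plan is to prove the proposition by induction on the common length $n$ of the sequences, the crucial point being that the recursive inversion of $\dhat$ can read off each of its steps from $\gamma$ alone, independently of the parameter used to build it. Since $\dhat$ only modifies entries and never changes length, if $\gamma\in H(\al)\cap H(\si)$ then $\al$, $\si$ and $\gamma$ share a length $n$, and I may write $\gamma=\dhat(\al)=\khat{e}(\si)$ for some $d\ge\mind\al$ and $e\ge\mind\si$. The cases $n\le 1$ are trivial, the only inversion sequences of those lengths being $\epsilon$ and $1$, each fixed by every hat map.

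Assume $n\ge 2$ and write $\al=\be a$ and $\si=\tau s$, with $\be,\tau$ of length $n-1$. Two observations drive the argument. First, by \eqref{notation1} the hat map leaves the final entry untouched, so the last letter of $\gamma$ equals $a$ and also equals $s$; hence $s=a$ and $\si=\tau a$. Second, and this is the heart of the matter, whether position $n$ is a leftmost copy of $\gamma$ is a property of $\gamma$ alone, yet by Proposition~\ref{dhat_is_cayley} it simultaneously detects the relevant branch for both sequences:
$$
n\in\nub\gamma \iff n\in\dAsc\al \qquad\text{and}\qquad n\in\nub\gamma \iff n\in\Asc_e\si.
$$
Thus $\al$ and $\si$ fall into the same case of \eqref{notation1}, even though $d$ and $e$ may differ.

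Now split on $n\in\nub\gamma$. If $n\notin\nub\gamma$, both sequences are in the non-$d$-ascent branch, so $\gamma=\beh a=\tah a$; deleting the last letter gives $\beh=\tah$, and since $\beh=\dhat(\be)$ and $\tah=\khat{e}(\tau)$ this common sequence lies in $H(\be)\cap H(\tau)$. If instead $n\in\nub\gamma$, both are in the ascent branch, so $\gamma=\beh^+a=\tah^+a$ with the same threshold $a$; because adding one to the entries $\ge a$ leaves the value $a$ absent, this is undone by subtracting one from every entry $>a$ of the common prefix of $\gamma$, again producing $\beh=\tah\in H(\be)\cap H(\tau)$. In either case the inductive hypothesis yields $\be=\tau$, whence $\al=\be a=\tau a=\si$.

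The principal obstacle is precisely the possibility $d\ne e$: a priori the two preimages might be unwound with different parameters, so that the case splits disagree. The resolution is that Proposition~\ref{dhat_is_cayley} encodes the branch of \eqref{notation1} into the leftmost-copy structure of $\gamma$, which is parameter-free, so the inversion proceeds identically for $\al$ and $\si$. Granting this, reversing the ``$+$'' operation at the already-determined threshold $a$ is routine and the induction closes.
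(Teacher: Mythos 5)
Your proof is correct and follows essentially the same route as the paper's: induction on length, observing that the last letters of $\al$ and $\si$ must both equal the last letter of $\ga$, using Proposition~\ref{dhat_is_cayley} to show that $n\in\nub\ga$ decides the same branch of the recursion for both sequences regardless of the (possibly different) parameters, and then in the ascent branch inverting the ``$+$'' rescaling at the common threshold to reduce to the prefixes. The only difference is presentational: you spell out explicitly why the rescaling is invertible (the threshold value is absent from the rescaled prefix), which the paper states more tersely.
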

\begin{proof}
We use induction on the size. The statement clearly holds for inversion
sequences of size $n\le 1$, so suppose $n\ge 2$. Let $\al$ and $\si$
be in $\I_n$, with $H(\al)\cap H(\si)\neq\emptyset$. If
$\ga\in H(\al)\cap H(\si)$, then
$$
\dhat(\al)=\khat{k}(\si)=\ga,
$$
for some $d\ge\mind\al$ and $k\ge\mind\si$. We prove that $\al=\si$.
Denote by $y$ the last letter of $\ga$. Note that the last letters
of~$\al$ and~$\si$ are equal to~$y$ as well. That is, we have
$\al=\be y$ and $\si=\tau y$, where~$\be$ and~$\tau$ denote
the corresponding prefixes of~$\al$ and~$\si$.
We consider two cases, according to whether or not~$y$ is a leftmost
copy in~$\ga$.

Initially, suppose that $n\notin\nub\ga$. Recall by
Proposition~\ref{dhat_is_cayley} that
$$
\dAsc\al=\nub\ga=\Asc_k\si.
$$
In particular, the last position~$n$ is neither a $d$-ascent
in $\al$, nor a $k$-ascent in $\si$. By definition of $\dhat$
and $\khat{k}$, we have, respectively,
\begin{align*}
\ga &=\dhat(\al)=\dhat(\be)y
\shortintertext{and}
\ga &=\khat{k}(\si)=\khat{k}(\tau)y.
\end{align*}
This forces 
$\dhat(\be)=\khat{k}(\tau)$ so that 
$H(\be)\cap H(\tau)\neq\emptyset$.
By induction, we have 
$\be = \tau$ and consequently
$$
\al=\be y=\tau y=\si.
$$

Finally, suppose that $n\in\nub\ga$. The proof is similar
to the previous case, the difference being that here the last position
is a $d$-ascent in~$\al$, as well as a $k$-ascent in~$\si$.
Therefore,
\begin{align*}
\ga &=\dhat(\al)=\dhat(\be)^+y
\shortintertext{and}
\ga &=\khat{k}(\si)=\khat{k}(\tau)^+y,
\end{align*}
and thus $\dhat(\be)^+=\khat{k}(\tau)^+$. Since both $\dhat(\be)^+$
and $\khat{k}(\tau)^+$ are obtained by rescaling entries $c\ge y$,
we have $\dhat(\be)=\khat{k}(\tau)$, and we can finish the proof as in
the previous case.
\end{proof}

\begin{corollary}\label{dhat_is_bij}
For each $d\ge 0$, we have a bijection $\dhat:\dA\to\dModasc$.
\hfill{\qed}
\end{corollary}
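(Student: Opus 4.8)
The plan is to obtain both halves of the bijection almost for free from the results already established, so the proof should be very short. Surjectivity is tautological: by definition we set $\dModasc=\dhat(\dA)$, so $\dhat$ maps $\dA$ onto $\dModasc$ by construction. It therefore suffices to address injectivity.

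For injectivity I would fix $d\ge 0$ and suppose $\al,\si\in\dA$ satisfy $\dhat(\al)=\dhat(\si)$. Since $\dhat$ preserves length, $\al$ and $\si$ have the same size $n$ and hence both lie in $\I_n$, so Proposition~\ref{Hset_disj} is applicable to them. The key observation is that membership $\al\in\dA$ means precisely $d\ge\mind\al$, whence $\dhat(\al)\in H(\al)$; likewise $\dhat(\si)\in H(\si)$. As these two $d$-hats coincide, their common value witnesses $H(\al)\cap H(\si)\neq\emptyset$. Proposition~\ref{Hset_disj} then forces $\al=\si$, establishing that $\dhat$ is injective on $\dA$.

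There is essentially no obstacle remaining at this stage, since the entire substance of the argument was carried by Proposition~\ref{Hset_disj}, which proved that distinct inversion sequences have disjoint families of $d$-hats. The only point requiring a moment's care is the bookkeeping connecting the single fixed parameter $d$ to the disjointness statement: one must note that $\al\in\dA$ guarantees $d\ge\mind\al$, so that $\dhat(\al)$ genuinely belongs to the set $H(\al)$ rather than being an out-of-range value. Once this is in place, surjectivity by definition together with the disjointness of the $H$-sets yields the claimed bijection.
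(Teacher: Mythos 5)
Your proof is correct and matches the paper's intended argument exactly: the paper states the corollary with no written proof precisely because, as announced before Proposition~\ref{Hset_disj}, injectivity is immediate from the disjointness of the sets $H(\al)$ (using that $\al\in\dA$ gives $d\ge\mind\al$, so $\dhat(\al)\in H(\al)$), while surjectivity holds by the definition $\dModasc=\dhat(\dA)$. Your extra bookkeeping remarks (length preservation, $\dA\subseteq\I$ via Lemma~\ref{dasc_iff_invseq}) are exactly the silent steps the paper relies on.
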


\subsection{Statistics preserved by \texorpdfstring{$\dhat$}{d-hat}}\label{section_stats}

Let us now turn our attention to which statistics are preserved
by $d$-hat. Define the {\em weak descent set} of $\al$ to be
$$
\wDes\al =\{i \ge 2 \mid a_i \le a_{i-1} \}
$$
We also say that $i$ is a {\em right-left minimum index} of $\al$ if 
$a_i < a_j$ for all $i<j\le n$. Further, the set of {\em right-left
minima pairs} is
$$
\rlMinP\al =\{(i,a_i) \mid \text{$i$ is a right-left minimum index of $\al$}\}.
$$
The following lemma will be useful.

\begin{lemma}\label{rlminP}
Let $\al=a_1 a_2\ldots a_n=\be a_n$ where $n\ge1$. Then
$$
\rlMinP(\al) = \rlMinP(a_1\ldots a_k) \uplus\{(n,a_n)\}
$$
where $k\ge1$ is the largest right-left minimum index of $\be$ such
that $a_k<a_n$. If no such index exists then we let $k=0$ so that
$\rlMinP(a_1\ldots a_k)=\rlMinP(\emp)=\emp$.
\end{lemma}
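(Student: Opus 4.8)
The plan is to argue directly from the definition of a right-left minimum index, exploiting one structural fact: along any word, the values at the right-left minima, read by increasing index, strictly increase.

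First I would dispose of the last position. The condition ``$a_i<a_j$ for all $i<j\le n$'' is vacuous when $i=n$, so $n$ is always a right-left minimum index of $\al$; this accounts for the summand $\{(n,a_n)\}$. For $i<n$ the defining condition splits cleanly: $a_i<a_j$ for all $i<j\le n$ is equivalent to ``$a_i<a_j$ for all $i<j\le n-1$'' (i.e.\ $i$ is a right-left minimum index of $\be$) together with ``$a_i<a_n$''. Hence the right-left minimum indices of $\al$ other than $n$ are exactly those right-left minimum indices $i$ of $\be$ with $a_i<a_n$; I will call these the \emph{low} indices.

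Next I would record the monotonicity fact. If $i_1<i_2<\cdots<i_m$ are the right-left minimum indices of $\be$, then for $s<t$ the index $i_s$ is a right-left minimum lying to the left of $i_t$, forcing $a_{i_s}<a_{i_t}$; so the values strictly increase with the index. Therefore the low indices form an initial segment $i_1,\dots,i_p$ of this list whose largest member is exactly the index $k$ of the statement (and $k=0$ precisely when no low index exists).

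It then remains to identify the low indices of $\be$ with the right-left minimum indices of the prefix $a_1\ldots a_k$. One inclusion is immediate: a low index $i\le k$ satisfies $a_i<a_j$ for all $i<j\le n-1$, hence for all $i<j\le k$, so $i$ is a right-left minimum of $a_1\ldots a_k$. The reverse inclusion is where the choice of $k$ does the work, and I expect it to be the main obstacle, since it requires controlling the entries of $\be$ strictly between $k$ and $n-1$. Suppose $i$ is a right-left minimum index of $a_1\ldots a_k$; then $a_i\le a_k$. As $k$ is itself a right-left minimum index of $\be$, we have $a_k<a_j$ for all $k<j\le n-1$, so $a_i\le a_k<a_j$ there, while $a_i<a_j$ already holds for $i<j\le k$; together these make $i$ a right-left minimum of $\be$, and $a_i\le a_k<a_n$ makes it low. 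Combining the three steps gives
$$
\rlMinP(\al)=\{(i,a_i)\mid i\text{ low}\}\uplus\{(n,a_n)\}=\rlMinP(a_1\ldots a_k)\uplus\{(n,a_n)\},
$$
with the degenerate case $k=0$ matching since $\rlMinP(\emp)=\emp$.
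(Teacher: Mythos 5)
Your proof is correct and follows essentially the same route as the paper: both arguments observe that $(n,a_n)$ always enters, that a right-left minimum of $\be$ survives in $\al$ exactly when its value is smaller than $a_n$, and that the survivors are precisely $\rlMinP(a_1\ldots a_k)$. The paper compresses that last identification into ``this is equivalent to our claim,'' whereas you prove it explicitly (via the monotonicity of right-left minimum values and the two inclusions), and in doing so you also treat the boundary case $a_i=a_n$ with the correct strict inequality, which the paper's wording (``$a_i>a_n$ will now have a smaller element to their right'') slightly blurs.
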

\begin{proof}
Consider what happens in passing from $\rlMinP\be$ to
$\rlMinP\al$. Of course, $(n,a_n)$ becomes a right-left minimum
pair in $\rlMinP\al$ since $a_n$ is the last element of the sequence.
Furthermore, any right-left minimum values $a_i$ of $\rlMinP\be$
with $a_i>a_n$ will now have a smaller element to their right and so
it will be removed in the transition to $\rlMinP\al$. The remaining
pairs of $\rlMinP\be$ will be preserved in $\rlMinP\al$.
This is equivalent to our claim.
\end{proof}

\begin{theorem}\label{props_of_H}
Suppose $\al\in\I_n$. We have the following for all $\ga\in H(\al)$:
\begin{enumerate}
\item $\Asc\ga =\Asc\al$.
\item $\wDes\ga = \wDes\al$.
\item $\rlMinP\ga =\rlMinP\al$.
\end{enumerate}
\end{theorem}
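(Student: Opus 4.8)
The plan is to argue by induction on the length $n$ of $\al$, exploiting the recursive description of $\dhat$ furnished by Definition~\ref{def_rec_dinv}. Fix $d\ge\mind\al$ and set $\ga=\dhat(\al)$; since $d$ is then an arbitrary admissible parameter, proving the three equalities for this $\ga$ establishes them for every element of $H(\al)$. The base case $n\le 1$ is immediate because $\dhat$ fixes sequences of length at most one. For $n\ge 2$ write $\al=\be a$ with $b$ the last letter of $\be$. As $\be$ is a prefix of the $d$-ascent sequence $\al$, it is itself a $d$-ascent sequence, so $\mind\be\le d$ and $\beh=\dhat(\be)$ is a legitimate element of $H(\be)$; the induction hypothesis thus applies to $\beh$, giving $\Asc\beh=\Asc\be$, $\wDes\beh=\wDes\be$ and $\rlMinP\beh=\rlMinP\be$. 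I would then split into the two cases of Definition~\ref{def_rec_dinv}, namely $\ga=\beh a$ when $n$ is not a $d$-ascent and $\ga=\beh^+a$ when it is, recalling that by definition of $\dhat$ the last entry is never modified, so the last letter of $\beh$ equals $b$.

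For part (a), the ascent statuses of the interior positions $2,\dots,n-1$ are controlled by the hypothesis together with Lemma~\ref{dasc_beplus}: in the second case the prefix is $\beh^+$, and the lemma (applied with $d=0$) gives $\Asc\beh^+=\Asc\beh=\Asc\be$, while appending a letter leaves earlier positions untouched. It then remains to check the new position $n$, which reduces to a direct comparison of $a$ with the last letter of the prefix ($b$ when $\ga=\beh a$, and $b$ or $b+1$ according as $b<a$ or $b\ge a$ when $\ga=\beh^+a$); in every case this matches the condition $a>b$ defining an ascent at position $n$ in $\al$. Part (b) then requires no separate work: for indices $i\ge 2$ one has $i\in\wDes$ exactly when $i\notin\Asc$, so $\wDes\de=\{2,\dots,n\}\setminus\Asc\de$ for any endofunction $\de$ of length $n$; since $\ga$ and $\al$ have equal length, part (a) immediately yields $\wDes\ga=\wDes\al$.

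The crux is part (c), and specifically the case $\ga=\beh^+a$ where the prefix is rescaled. The key observation I would isolate is that the value map $v\mapsto v+1$ for $v\ge a$ (and $v\mapsto v$ otherwise) is strictly increasing, hence injective and order preserving; consequently it preserves all strict comparisons among the entries of $\beh$, and in particular the set of right-left minimum indices of $\beh^+$ coincides with that of $\beh$. Moreover, by Lemma~\ref{rlminP} the right-left minimum pairs of $\beh^+$ that survive the appending of $a$ are exactly those of value strictly less than $a$; such values lie below $a$ and are therefore left unchanged by the rescaling, so these surviving pairs are literally the pairs of $\beh$ of value $<a$, which by the induction hypothesis are the pairs of $\be$ of value $<a$. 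Applying Lemma~\ref{rlminP} once to $\ga=\beh^+a$ and once to $\al=\be a$ then gives $\rlMinP\ga=\rlMinP\al$, the case $\ga=\beh a$ being the same argument with the rescaling absent. The main obstacle is precisely this interplay in part (c): one must verify both that the rescaling does not disturb the relative order (so that right-left minimum positions are preserved) and that the pairs actually retained are immune to the rescaling because their values fall below the threshold $a$.
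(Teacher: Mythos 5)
Your proof is correct and follows essentially the same route as the paper's: induction on length with the case split $\ga=\beh a$ versus $\ga=\beh^+ a$, using Lemma~\ref{dasc_beplus} and a last-letter comparison for (a), the complementation $\Asc\al\uplus\wDes\al=[n]$ for (b), and Lemma~\ref{rlminP} together with the observation that the surviving right-left minimum pairs have value below the threshold $a$ and are therefore unaffected by the rescaling for (c). The only cosmetic difference is that you justify the preservation of right-left minima under the passage from $\beh$ to $\beh^+$ via order-preservation of the value map, whereas the paper asserts the pair-by-pair correspondence directly.
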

\begin{proof}
(a) We induct on $n$ where the case $n\le 1$ is trivial.
Let $\al=\be a$. Pick a $d$ for which $\al$ is a $d$-ascent sequence and
let $\alh=\dhat(\al)$ and $\beh=\dhat(\be)$. We follow our usual conventions~\eqref{notation1} and~\eqref{notation2}
and denote by~$b$, $b'$ and $b''$ the last letter of~$\be$, $\beh$
and~$\beh^+$, respectively. Note that $b'=b$ by Lemma~\ref{final}.
By induction, we have 
$$
\Asc\beh=\Asc\be.
$$
There are now three cases.
First suppose that $a\le b-d$, so that $\alh=\beh a$. From this, the
induction hypothesis, and the fact that $a\le b=b'$ we obtain
$$
\Asc\alh = \Asc(\beh a) = \Asc\beh = \Asc\be = \Asc(\be a) = \Asc \al.
$$
For the next two cases we will have $a>b-d$ so that~$n$ is a $d$-ascent
and $\alh=\beh^+a$. If $a\le b$, then
$$
b''=b'+1=b+1>a.
$$
Thus, using Lemma~\ref{dasc_beplus}, 
$$
\Asc\alh 
= \Asc(\beh^+a) 
= \Asc\beh^+ 
= \Asc\beh 
= \Asc\be   
= \Asc\be a 
= \Asc\al.
$$
Finally, suppose that $a>b$. Then
$$
b''=b'=b<a
$$
and, in a similar manner to the first case,
$$
\Asc\alh
= \Asc\beh^+\cup\{n\}
= \Asc\be\cup\{n\}
= \Asc\al,
$$
proving the first item.

(b) Directly from the definitions, for all inversion sequences~$\al$
of length~$n$ we have $\Asc\al\uplus \wDes\al = [n]$. So this part
follows immediately from (a).

(c) By induction
$$
\rlMinP\beh=\rlMinP\be.
$$
Again, we begin with the case $a\le b-d$ so that $\alh=\beh a$.
By induction and the fact that both~$\al$ and~$\alh$
end in~$a$, we see that the index~$k$ in Lemma~\ref{rlminP} will be the
same for both~$\al$ and~$\alh$. Thus, using the same lemma and the
inductive hypothesis,
\begin{align*}
\rlMinP\alh &= \rlMinP(b'_1\ldots b'_k) \uplus\{(n,a_n)\}\\
&= \rlMinP(b_1\ldots b_k)\uplus\{(n,a_n)\}\\
&= \rlMinP\al.
\end{align*}

Now consider what happens when $a>b-d$ and $\alh=\beh^+a$.
We must relate $\rlMinP\beh$ and $\rlMinP\beh^+$. By the way $\beh^+$
is constructed from $\beh$ we see that every pair $(i,b'_i)\in\rlMinP\beh$
is either replaced by $(i,b'_i+1)\in\rlMinP\beh^+$ if $b'_i\ge a$ or
remains as $(i,b'_i)$ if $b'_i<a$. In particular, $\beh^+ a$ and $\beh a$ will have the same index~$k$ from Lemma~\ref{rlminP}.
Moreover, due to our choice of~$k$,
$$
\rlMinP(b''_1\ldots b''_k)=
\rlMinP(b'_1\ldots b'_k)=
\rlMinP(b_1\ldots b_k).
$$
The proof is now completed in a manner similar to the first case.
\end{proof}

\section{Modified inversion sequences}\label{section_modinv}

Recall from Lemma~\ref{dasc_iff_invseq} that $\I=\bigcup_{d\ge 0}\dA$.
We shall define the set~$\Modinv$ of \emph{modified inversion sequences}
as
\begin{equation}\label{eq_def_modinv}
\Modinv=\bigcup_{d\ge 0}\dModasc.
\end{equation}
An alternative way of arriving at $\Modinv$ is illustrated in the
next result which follows easily from Lemma~\ref{dasc_iff_invseq} and Proposition~\ref{Hset_disj}.

\begin{proposition}
We have the disjoint union

\eqqed{
\Modinv={\biguplus_{\al\in\I}}H(\al).
}
\end{proposition}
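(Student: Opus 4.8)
The plan is to unpack the statement into its two components: that the union on the right is disjoint, and that it coincides with $\Modinv$. The disjointness is nothing more than a restatement of Proposition~\ref{Hset_disj}, which asserts that distinct inversion sequences $\al\neq\si$ satisfy $H(\al)\cap H(\si)=\emptyset$. Hence the only substantive task is to verify the set equality $\Modinv=\bigcup_{\al\in\I}H(\al)$, and for this I would simply unfold both descriptions and match them element by element.

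On the left, combining the definition~\eqref{eq_def_modinv} with $\dModasc=\dhat(\dA)$ shows that $\Modinv$ consists precisely of the sequences $\dhat(\al)$ ranging over all $d\ge 0$ and all $\al\in\dA$. On the right, the set $\bigcup_{\al\in\I}H(\al)$ consists of the sequences $\dhat(\al)$ ranging over all $\al\in\I$ and all $d\ge\mind\al$. For the forward inclusion, I would take any $\al\in\dA$; then $\al\in\I$ by Lemma~\ref{dasc_iff_invseq}, and $d\ge\mind\al$ holds by the very definition of $\mind\al$ as the least such index, so $\dhat(\al)\in H(\al)$. For the reverse inclusion, given $\al\in\I$ and $d\ge\mind\al$, I would note that $\al\in\kAscseq{\mind\al}$ by definition of $\mind\al$, whence the chain of containments~\eqref{chain_dasc} yields $\al\in\dA$; therefore $\dhat(\al)\in\dhat(\dA)=\dModasc\subseteq\Modinv$. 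The two sets thus coincide.

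I do not anticipate a genuine obstacle: once the definitions are spelled out, the argument is bookkeeping. The one spot deserving a moment of attention is the reverse inclusion, where one must use~\eqref{chain_dasc} to promote membership in $\kAscseq{\mind\al}$ to membership in $\dA$ for every $d\ge\mind\al$, thereby guaranteeing that $\dhat(\al)$ is a bona fide element of $\dModasc$ for the relevant $d$. Feeding the resulting equality into Proposition~\ref{Hset_disj} then delivers the disjoint union exactly as stated.
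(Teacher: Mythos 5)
Your proof is correct and follows exactly the route the paper intends: the paper states that the result ``follows easily from Lemma~\ref{dasc_iff_invseq} and Proposition~\ref{Hset_disj}'', and your argument is precisely the bookkeeping that justifies this, using Lemma~\ref{dasc_iff_invseq} together with the containment chain~\eqref{chain_dasc} for the set equality and Proposition~\ref{Hset_disj} for disjointness.
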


By Proposition~\ref{dhat_is_cayley}, modified inversion sequences
are Cayley permutations; that is, $\Modinv\subseteq\Cay$.
Further, by Proposition~\ref{Hset_disj} given any $\ga\in\Modinv$
there is a unique $\al\in\I$ such that $\ga=\dhat(\al)$, for some
$d\ge\mind\al$. Note that such a~$d$ is not unique, but $\al$ is.
This allows us to define a map
$$
\hatinv:\Modinv\to\I
$$
by letting $\hatinv(\ga)$ be the only inversion sequence~$\al$
such that $\ga\in H(\al)$. We wish to describe $\hatinv$
more explicitly. 
First, let us recall~\cite[Section 4.1]{BMCDK:2fp} an algorithm
to define $\hatinv(\ga)$ in the special case where~$\ga$ is
the modified ascent sequence of $\al\in\kAscseq{0}$.
Let $\ga=g_1\ldots g_n$ and let $\Asc\ga=(i_1,\dots,i_k)$.
Then:
\begin{align*}
\mathtt{for}\;i=i_k,&\dots,i_1:\\
\mathtt{for}\;j& =1,\dots,i-1:\\
&\mathtt{if}\;g_j> g_i\;\mathtt{then}\;g_j:=g_j-1.
\end{align*}
The output of the above procedure is the desired ascent sequence~$\al$.
Since $\al\in\kAscseq{0}$, we have $\Asc\al=\Asc\ga$. 
The previous algorithm goes through the $0$-ascents of $\ga$,
from right to left, to determine the cases where the entries in the prefix
need to be decreased. To define $d$-hat, we have replaced $\Asc\al$
with $\dAsc\al$. By Proposition~\ref{dhat_is_cayley}, we have
$\dAsc\al=\nub\ga$. Therefore, by replacing $\Asc\ga$ with
$\nub\ga$ in the algorithm just given we will obtain the desired
generalization of $\hatinv$ to the set $\Modinv$.
Surprisingly, the definition does not depend on $d$.
Instead of writing the algorithm explicitly, we shall give an equivalent,
recursive description of $\hatinv$. Let $\hatinv(\epsilon)=\epsilon$, the
empty sequence, and $\hatinv(1)=1$. Suppose $n\ge 2$ and let
$\ga=g_1\ldots g_n\in\Modinv_n$. Let $\de=g_1\ldots g_{n-1}$. Then
$$
\hatinv(\ga)=
\begin{cases}
\hatinv(\de^-)g_n & \text{if $n\in\nub\ga$;}\\
\hatinv(\de)g_n & \text{otherwise,}
\end{cases}
$$
where $\de^-$ is obtained from $\de$ by decreasing by one each
entry $c>g_n$. The map~$\hatinv:\Modinv\to\I$ defined this way is surjective but not
bijective, and
$$
\hatinv\circ\dhat(\al)=\alpha\quad\text{for every $\al\in\dA$.}
$$
We leave the details to the reader.

To obtain a deeper understanding of~$\Modinv$, it would be interesting
to characterize it as a subset of~$\Cay$ in the same spirit of
equation~\eqref{eq_modasc_char} for~$\kModasc{0}$.
The following proposition is a first step in this direction.

\begin{proposition}
Let $\ga\in\Modinv$. Then $\Asc\ga\subseteq\nub\ga$. Thus,
$$
\Modinv\subseteq\Cay\left(
\begin{tikzpicture}[scale=0.50, baseline=20pt]
\fill[NE-lines] (2,0) rectangle (3,3);
\draw [semithick] (0,0.85) -- (4,0.85);
\draw [semithick] (0,1.15) -- (4,1.15);
\draw [semithick] (0,1.85) -- (4,1.85);
\draw [semithick] (0,2.15) -- (4,2.15);
\draw [semithick] (1,0) -- (1,3);
\draw [semithick] (2,0) -- (2,3);
\draw [semithick] (3,0) -- (3,3);
\filldraw (1,2) circle (5pt);
\filldraw (2,1) circle (5pt);
\filldraw (3,2) circle (5pt);
\end{tikzpicture}\right).
$$
\end{proposition}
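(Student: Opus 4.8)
The plan is to prove the inclusion $\Asc\ga\subseteq\nub\ga$ for every $\ga\in\Modinv$, from which the Cayley-mesh pattern containment follows exactly as in the characterization of $\Modasc_0$ in equation~\eqref{eq_modasc_char}: the forbidden pattern displayed on the right encodes an ascent that is \emph{not} a leftmost copy, so avoiding it is precisely the statement that every ascent is a leftmost copy, i.e.\ $\Asc\ga\subseteq\nub\ga$.

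Let me look at the key identities I have available. By the definition of $\Modinv$ in~\eqref{eq_def_modinv}, any $\ga\in\Modinv$ lies in $\dModasc$ for some $d\ge0$, so $\ga=\dhat(\al)$ for some $\al\in\dA$. Proposition~\ref{dhat_is_cayley} then gives $\dAsc\ga\subseteq\nub\ga$? No---it gives $\dAsc\al=\nub\ga$. What I actually want to combine are three facts: first, Proposition~\ref{dhat_is_cayley} says $\dAsc\al=\nub\alh=\nub\ga$; second, Corollary~\ref{dasc_hat_subset} (equivalently Proposition~\ref{dasc_cont_nub}) gives $\dAsc\ga\subseteq\dAsc\al=\nub\ga$; and third, since a $0$-ascent is a $d$-ascent for every $d\ge0$ (because the chain~\eqref{chain_dasc} reflects that the $d$-ascent condition $a_i>a_{i-1}-d$ weakens as $d$ grows), we have $\Asc\ga=\Asc_0\ga\subseteq\dAsc\ga$. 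Chaining these together yields
$$
\Asc\ga\;\subseteq\;\dAsc\ga\;\subseteq\;\nub\ga,
$$
which is exactly the desired inclusion.

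So the forward steps are: (1) fix $\ga\in\Modinv$ and use~\eqref{eq_def_modinv} to write $\ga=\dhat(\al)$ with $\al\in\dA$; (2) invoke Proposition~\ref{dasc_cont_nub} to get $\dAsc\ga\subseteq\nub\ga$; (3) note the elementary inclusion $\Asc\ga\subseteq\dAsc\ga$, valid since every ordinary ascent $a_i>a_{i-1}$ trivially satisfies $a_i>a_{i-1}-d$ and the first position is always counted in both; (4) compose the two inclusions; and (5) translate $\Asc\ga\subseteq\nub\ga$ into the avoidance of the single Cayley-mesh pattern shown, reading off that the forbidden configuration is precisely ``a position that is an ascent but not a leftmost copy.''

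I do not anticipate a serious obstacle here, since all the hard combinatorial work has already been packaged into Proposition~\ref{dhat_is_cayley} and Corollary~\ref{dasc_hat_subset}; the only point requiring a line of care is the mesh-pattern translation in step~(5), where I must confirm that the shaded column between the two dots in the displayed pattern correctly forbids intermediate equal entries so that the pattern genuinely captures ``ascent that repeats an earlier value'' rather than something weaker. This is the one place where I would double-check the shading against the conventions for Cayley-mesh patterns from~\cite{Ce:sort}, mirroring the analysis already done for the pair of patterns characterizing $\Modasc_0$.
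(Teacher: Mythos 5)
Your proposal is correct and matches the paper's own proof essentially verbatim: both write $\ga=\dhat(\al)$ using the definition of $\Modinv$ and then chain $\Asc\ga\subseteq\dAsc\ga\subseteq\nub\ga$, with the second inclusion supplied by Proposition~\ref{dasc_cont_nub}. The only addition on your part is the explicit justification of the elementary inclusion $\Asc\ga\subseteq\dAsc\ga$ and the care about the mesh-pattern translation, both of which the paper leaves implicit.
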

\begin{proof}
Since $\ga\in\Modinv$, there exist $\al\in\I$ and $d\ge\mind\al$
such that $\ga=\dhat(\al)$. In particular,
$$
\Asc(\ga)\subseteq\dAsc(\ga)\subseteq\nub\ga,
$$
where the last set containment is Proposition~\ref{dasc_cont_nub}.
\end{proof}

\subsection{Maximal \texorpdfstring{$d$}{d}-hat}\label{section_nhat}

Recall from Lemma~\ref{size_of_H} that $\mind\al\le n$ for each
$\al\in\I_n$. By proposition~\ref{Hset_disj}, for each $n\ge 0$ we
have an injection
\begin{align*}
\khat{n}:\I_n\;&\longrightarrow\;\Modinv_n\\
\al\;&\longmapsto\;\khat{n}(\al).
\end{align*}
Since---again by Lemma~\ref{size_of_H}---applying $d$-hat gives the
same result for every $d\ge n-1$, we will call \emph{max-hat} the injection
\begin{align*}
\maxhat:\I\;&\longrightarrow\;\Modinv\\
\al\;&\longmapsto\;\khat{|\al|-1}(\al).
\end{align*}
The main goal of this subsection is to prove that $\maxhat$ maps
$\I$ bijectively to~$\Sym$. Namely, we show that $\maxhat(\al)$ is
the permutation whose recursive construction by insertion of a new
rightmost entry is encoded by~$\al$.

We start with a simple lemma.

\begin{lemma}\label{lemma_nhat}
Let $\al\in\I_n$. Suppose that $\al=\be a$, for some $\be\in\I_{n-1}$
and $1\le a\le n$. Then
$$
\maxhat(\al)=\maxhat(\be)^+a.
$$
\end{lemma}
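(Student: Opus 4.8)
The plan is to reduce everything to a single value of the parameter and then invoke the one-step recursion~\eqref{notation1}. The key observation is that $\maxhat$ secretly hides a length-dependent parameter: by definition $\maxhat(\al)=\khat{n}(\al)$ for $\al\in\I_n$, whereas $\maxhat(\be)=\khat{n-1}(\be)$ for $\be\in\I_{n-1}$. Before I can compare the two sides I must put both hats at the same level. This is exactly what Lemma~\ref{size_of_H} provides: since $\be\in\I_{n-1}$ we have $\mind\be\le n-1$ and $\khat{d}(\be)$ is constant for all $d\ge n-1$; in particular $\khat{n}(\be)=\khat{n-1}(\be)=\maxhat(\be)$. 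So the first step is to rewrite the target entirely in terms of $\khat{n}$, turning the claim into $\khat{n}(\al)=\khat{n}(\be)^+a$.

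Next I would recall that $\al=\be a\in\I_n\subseteq\kAscseq{n,n}$, the inclusion being established in the proof of Lemma~\ref{dasc_iff_invseq}. Hence $\al$ is a genuine $n$-ascent sequence and formula~\eqref{notation1} applies with $d=n$: writing $b$ for the last letter of $\be$, it reads $\khat{n}(\al)=\khat{n}(\be)\,a$ if $a\le b-n$ and $\khat{n}(\al)=\khat{n}(\be)^+a$ if $a>b-n$.

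The case split is then discharged trivially by positivity of the entries: since $\be$ is an inversion sequence, $b\le n-1$, so $b-n\le -1<1\le a$, forcing $a>b-n$. Thus only the second branch of~\eqref{notation1} ever occurs, giving $\khat{n}(\al)=\khat{n}(\be)^+a$. Combining this with the two identifications $\khat{n}(\al)=\maxhat(\al)$ and $\khat{n}(\be)=\maxhat(\be)$ from the first paragraph yields $\maxhat(\al)=\maxhat(\be)^+a$, as claimed. (The superscript $+$ on both sides is taken with the same threshold $a$, matching the convention in~\eqref{notation1}, so no rescaling mismatch arises.)

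I expect the only genuine subtlety—and the step worth stating carefully—to be the level-matching in the first paragraph. The naive reading tempts one to apply~\eqref{notation1} directly to $\maxhat$, but $\maxhat$ is not a single map $\dhat$, so one really needs the stabilization $\khat{n}(\be)=\khat{n-1}(\be)$ from Lemma~\ref{size_of_H} to legitimately replace $\khat{n}(\be)$ by $\maxhat(\be)$. Everything after that is an immediate application of~\eqref{notation1} together with the bound $b\le n-1$, with no computation required.
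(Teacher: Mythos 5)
Your proof is correct and follows essentially the same route as the paper: both unfold $\maxhat(\al)=\khat{n}(\al)$, observe that the last position is forced to be an $n$-ascent (the paper via $\Asc_n\al=[n]$, you via $b\le n-1<a+n$) so that only the $\beh^+a$ branch of~\eqref{notation1} can occur, and then identify the result with $\maxhat(\be)^+a$ using Lemma~\ref{size_of_H}. Your explicit stabilization step $\khat{n}(\be)=\khat{n-1}(\be)$ merely spells out a level-matching detail that the paper's three-line computation leaves implicit.
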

\begin{proof}
We have:
\begin{align*}
\maxhat(\al) &=\khat{n}(\be a)\\
&=\khat{n-1}(\be)^+a
  &&\text{(since $\Asc_{n}\al=[n]$)}\\
&=\maxhat(\be)^+a
  &&\text{(by the definition of $\maxhat$).}
\end{align*}
This concludes the proof.
\end{proof}

\begin{lemma}\label{max_hat}
Let $\al\in\I$. Then $\maxhat(\al)\in\Sym$.
\end{lemma}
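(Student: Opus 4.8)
The plan is to induct on $n=|\al|$ using the recursion just established in Lemma~\ref{lemma_nhat}. The base cases $n\le 1$ are immediate, since $\maxhat(\epsilon)=\epsilon$ and $\maxhat(1)=1$ are both permutations. For the inductive step I would write $\al=\be a$ with $\be\in\I_{n-1}$ and $1\le a\le n$, and apply Lemma~\ref{lemma_nhat} to obtain $\maxhat(\al)=\maxhat(\be)^+a$. By the inductive hypothesis $\maxhat(\be)\in\Sym_{n-1}$, so its entries are exactly the values $1,2,\dots,n-1$, each occurring once.

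The key step is then a bookkeeping argument on the image. Applying the ``$+$'' operation, which adds one to every entry $c\ge a$, sends the value set $\{1,\dots,n-1\}$ of $\maxhat(\be)$ to $\{1,\dots,a-1\}\cup\{a+1,\dots,n\}$: entries below $a$ are fixed, while $a,a+1,\dots,n-1$ are shifted up to $a+1,a+2,\dots,n$. Hence $\maxhat(\be)^+$ is a word of length $n-1$ on $n-1$ distinct values, missing precisely $a$. Appending the final letter $a$ fills exactly this gap, so $\maxhat(\al)=\maxhat(\be)^+a$ has image $\{1,2,\dots,n\}$ with no repeated entry, i.e.\ it lies in $\Sym_n$.

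I expect the image computation to be the only real content: one must verify that rescaling creates a single gap landing exactly at $a$, so that appending $a$ yields a bijection rather than a repetition or a new gap. This is the same gap-filling phenomenon already used in the second bullet of the proof of Proposition~\ref{dhat_is_cayley}, so no genuinely new idea is required; the work is purely in tracking the multiset of values.

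As a shortcut, I would also note that the induction can be avoided entirely. By Proposition~\ref{dhat_is_cayley}, $\maxhat(\al)=\khat{n}(\al)$ is a Cayley permutation with $\max\maxhat(\al)=\asc_n\al$. But for $\al\in\I_n$ every index is an $n$-ascent, as observed in the proof of Lemma~\ref{dasc_iff_invseq}, so $\Asc_n\al=[n]$ and therefore $\max\maxhat(\al)=\asc_n\al=n$. A Cayley permutation of length $n$ whose maximum value is $n$ is forced to be a bijection of $[n]$, giving $\maxhat(\al)\in\Sym$ immediately.
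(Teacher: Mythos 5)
Your main argument is essentially the paper's own proof: induction on $n$, the decomposition $\al=\be a$, and Lemma~\ref{lemma_nhat} giving $\maxhat(\al)=\maxhat(\be)^+a$. The paper simply asserts that this word is ``clearly a permutation'' once $\maxhat(\be)\in\Sym_{n-1}$ is known by induction, whereas you spell out the underlying bookkeeping --- the $+$ operation sends the value set $\{1,\dots,n-1\}$ to $\{1,\dots,a-1\}\cup\{a+1,\dots,n\}$, and the appended letter $a$ fills the unique gap --- which is a correct elaboration of that ``clearly.''

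Your closing shortcut is also valid and is a route the paper does not take: since $\I_n\subseteq\kAscseq{n,n}$ and $\Asc_n\al=[n]$ for $\al\in\I_n$ (both established in the proofs of Lemmas~\ref{dasc_iff_invseq} and~\ref{size_of_H}), Proposition~\ref{dhat_is_cayley} shows that $\maxhat(\al)=\khat{n}(\al)$ is a Cayley permutation of length $n$ whose maximum is $\asc_n\al=n$, and a Cayley permutation whose maximum equals its length must be a bijection of $[n]$. This avoids a fresh induction entirely, at the cost of invoking the heavier Proposition~\ref{dhat_is_cayley}; the paper's inductive argument is more self-contained given Lemma~\ref{lemma_nhat}, but your observation is arguably the conceptually cleaner explanation of why the image is all of $\Sym_n$.
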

\begin{proof}
We use induction on the size~$n$ of $\al$, where the case $n\le 1$
is easy to prove. Let $n\ge 2$. Let $\al=\be a$, for some $\be\in\I_{n-1}$
and $a\in [n]$. By Lemma~\ref{lemma_nhat}, we have
$\maxhat(\al)=\maxhat(\be)^+a$, which is clearly a permutation
since $\maxhat(\be)\in\Sym_{n-1}$ by induction. 
\end{proof}

\begin{corollary}
\label{HatMatCor}
We have a size-preserving bijection $\maxhat:\I\to\Sym$.
\end{corollary}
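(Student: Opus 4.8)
The plan is to assemble the results already in hand into the four properties a size-preserving bijection must have: that $\maxhat$ lands in $\Sym$, that it preserves size, that it is injective, and that it is surjective. Two of these are essentially immediate. Size-preservation is built into the definition $\maxhat(\al)=\khat{|\al|}(\al)$, since the $d$-hat map never alters the length of its argument; and Lemma~\ref{max_hat} already guarantees $\maxhat(\al)\in\Sym$ for every $\al\in\I$. Together these show that, for each $n$, the map $\maxhat$ restricts to a map $\I_n\to\Sym_n$.

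Next I would establish injectivity using Proposition~\ref{Hset_disj}. The key observation is that $\maxhat(\al)=\khat{|\al|}(\al)$ is a genuine element of $H(\al)$: by Lemma~\ref{size_of_H} we have $\mind\al\le|\al|$, so $|\al|$ is an admissible index and $\khat{|\al|}(\al)\in H(\al)$ by definition of that set. Hence if $\maxhat(\al)=\maxhat(\si)$ for inversion sequences $\al,\si$, then this common permutation lies in $H(\al)\cap H(\si)$, which is therefore nonempty, and Proposition~\ref{Hset_disj} forces $\al=\si$.

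Finally, surjectivity comes for free from a cardinality count. Recall $\#\I_n=n!=\#\Sym_n$. Since $\maxhat$ restricts to an injection $\I_n\to\Sym_n$ between two finite sets of equal cardinality, it is automatically a bijection on each size class, and hence a size-preserving bijection $\I\to\Sym$ overall.

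There is no genuinely hard step: the corollary is a bookkeeping assembly of Lemma~\ref{max_hat}, Proposition~\ref{Hset_disj}, and the equinumerosity $\#\I_n=\#\Sym_n$. The only point requiring a moment's care is verifying that $\maxhat(\al)\in H(\al)$, so that Proposition~\ref{Hset_disj} actually applies; this is precisely where the bound $\mind\al\le|\al|$ from Lemma~\ref{size_of_H} is needed.
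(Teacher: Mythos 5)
Your proof is correct and follows essentially the same route as the paper's: injectivity via Proposition~\ref{Hset_disj} (with Lemma~\ref{size_of_H} ensuring $\maxhat(\al)\in H(\al)$), image in $\Sym$ via Lemma~\ref{max_hat}, and bijectivity from the equinumerosity $\#\I_n=\#\Sym_n=n!$. The only difference is that you make explicit the verification that Proposition~\ref{Hset_disj} applies, which the paper leaves implicit.
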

\begin{proof}
By Proposition~\ref{Hset_disj} and Lemma~\ref{max_hat}, the map
$\khat{n}:\I_n\to\Sym_n$ is injective for every $n\ge 0$. The theorem
follows since it is well known that $\I_n$ and $\Sym_n$ are equinumerous.
\end{proof}

The behavior of $\maxhat$ on $\I$ can be summarized by saying
that~$\al$ encodes the construction of $\maxhat(\al)$ by insertion
of a new rightmost entry. More specifically, when we modify $\al$
under $\maxhat$, at each step we increase by one all the entries in
the current prefix that are greater than or equal to the current
rightmost one. This step-by-step process is illustrated below for
$\al=1224315$:
\begin{align*}
1       &\longmapsto 1\\
12      &\longmapsto 1\underline{2}\\
122     &\longmapsto 1\mathbf{3}\underline{2}\\
1224    &\longmapsto 132\underline{4}\\
12243   &\longmapsto 1\mathbf{4}2\mathbf{5}\underline{3}\\
122431  &\longmapsto \mathbf{25364}\underline{1}\\
1224315 &\longmapsto 2\mathbf{6}3\mathbf{7}41\underline{5}\;=\;
\maxhat(\al)\\
\end{align*}

We end this section with a simple remark. A \emph{flat step}
in $\al=a_1\ldots a_n\in\End$ is a pair of consecutive equal entries
$a_i=a_{i+1}$. Let $\al\in\I$ and let $\ga=\maxhat(\al)$.
It is easy to see that $a_i=a_{i+1}$ is a flat step in~$\al$ if and
only if in~$\ga$ we have $g_i >g_{i+1}$ and no entries $g_j$, $j<i$,
satisfy $g_{i+1}<g_j<g_i$. Define the mesh pattern $\mathfrak{a}$
accordingly as
$$
\mathfrak{a}=\begin{tikzpicture}[scale=0.50, baseline=19pt]
\fill[NE-lines] (1,0) rectangle (2,3);
\fill[NE-lines] (0,1) rectangle (2,2);
\draw [semithick] (0,1) -- (3,1);
\draw [semithick] (0,2) -- (3,2);
\draw [semithick] (1,0) -- (1,3);
\draw [semithick] (2,0) -- (2,3);
\filldraw (1,2) circle (5pt);
\filldraw (2,1) circle (5pt);
\end{tikzpicture}.
$$
The next proposition follows immediately.

\begin{proposition}\label{flatsteps}
The map $\maxhat$ restricts to a bijection between inversion
sequences with no flat steps and permutations avoiding~$\mathfrak{a}$.
\hfill{\qed}
\end{proposition}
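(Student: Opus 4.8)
The plan is to leverage Corollary~\ref{HatMatCor}, which already gives that $\maxhat\colon\I\to\Sym$ is a size-preserving bijection. Thus it suffices to prove that an inversion sequence $\al$ has no flat step if and only if $\ga=\maxhat(\al)$ avoids $\mathfrak{a}$; the restricted map is then automatically a bijection between the two indicated subsets, with surjectivity inherited from that of $\maxhat$. Reading off the shading of $\mathfrak{a}$, an occurrence is a pair of \emph{adjacent} positions $i,i+1$ (forced by the fully shaded middle column) with $g_i>g_{i+1}$ and with no earlier entry $g_j$, $j<i$, satisfying $g_{i+1}<g_j<g_i$ (the second shaded box rules out values strictly between the two pattern values to the left). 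Hence the whole statement reduces to the position-by-position equivalence recorded in the remark preceding the proposition: for each $i$, one has $a_i=a_{i+1}$ in $\al$ exactly when $\mathfrak{a}$ occurs at $(i,i+1)$ in $\ga$.

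I would prove this equivalence by induction on $n=|\al|$, writing $\al=\be a$ with $a=a_n$ and invoking Lemma~\ref{lemma_nhat} to get $\ga=\maxhat(\be)^+a$. Thus $g_n=a$, while for $j<n$ the entry $g_j$ is the image of the $j$th entry of $\maxhat(\be)$ under the map $\phi\colon c\mapsto c+[c\ge a]$. This $\phi$ is strictly increasing and omits the value $a$ from its range, so no $g_j$ with $j<n$ equals $a$, and $\phi$ preserves both the relation $>$ and strict betweenness among the entries it touches. For an index $i<n-1$, the entries $g_i$, $g_{i+1}$, and all $g_j$ with $j<i$ lie at positions $<n$, so the occurrence condition for $\mathfrak{a}$ at $(i,i+1)$ in $\ga$ transfers verbatim to the same condition in $\maxhat(\be)$; by induction this matches the flat step $a_i=a_{i+1}$, which depends only on $\be$ and is unaffected by appending $a$.

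The remaining case $i=n-1$ is where the real bookkeeping lives. Here I would use that the last letter of $\maxhat(\be)$ is $a_{n-1}$ (the final position is never incremented by the hat construction), equivalently Lemma~\ref{final} specialized to $d\ge n$, to conclude $g_{n-1}=a_{n-1}+[a_{n-1}\ge a]$ and $g_n=a=a_n$. If $a_{n-1}<a_n$ then $g_{n-1}=a_{n-1}<a_n=g_n$, an ascent, so neither a flat step nor an occurrence is present. If $a_{n-1}\ge a_n$ then $g_{n-1}=a_{n-1}+1>g_n$ is a descent; since $\ga\in\Sym_n$ every value in $[n]$ appears, so a value strictly between $g_n=a_n$ and $g_{n-1}=a_{n-1}+1$ exists, and it can only sit at a position $j<n-1$, which happens exactly when $a_{n-1}>a_n$. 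Therefore the occurrence condition holds if and only if $a_{n-1}=a_n$, i.e.\ a flat step. This closes the induction and hence the proposition. I expect the only genuine obstacle to be this endpoint analysis together with the careful tracking of the gap created at the value $a$ by $\phi$; everything else is order-preservation and the recursion of Lemma~\ref{lemma_nhat}.
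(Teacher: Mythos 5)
Your proposal is correct and takes essentially the same route as the paper: the paper derives the proposition immediately from the preceding remark, which asserts (without proof, as ``easy to see'') exactly the position-by-position equivalence you identify between flat steps of $\al$ and occurrences of $\mathfrak{a}$ in $\maxhat(\al)$, combined with the bijectivity of $\maxhat$ from Corollary~\ref{HatMatCor}. The only difference is that you supply the inductive verification of that equivalence (via Lemma~\ref{lemma_nhat}, order-preservation of the rescaling map, and the endpoint analysis at $i=n-1$), which the paper leaves to the reader.
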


\section{Subdiagonal permutations}\label{sec_subd}

Recall from Subsection~\ref{section_nhat} that an inversion sequence~$\al$
encodes the construction of $\maxhat(\al)$ by insertion of a new
rightmost entry. In this section, we restrict $\maxhat$ to the
set of ascent sequences and characterize the resulting set of permutations
which have the following subdiagonal property.

Any permutation $\pi\in\Sym$ factors 
uniquely by maximal increasing runs
as $\pi=B_1B_2\ldots B_k$, where $k=n+1-\asc\al$. 
We say that
$\pi$ is
\begin{align*}
&\text{\emph{ir-superdiagonal}}, && \text{if $c\ge i$ for each $c\in B_i$};\\
&\text{\emph{ir-subdiagonal}}, && \text{if $c\le n+1-i$ for each $c\in B_i$},
\end{align*}
where the prefix ``ir'' denotes that~$\pi$ is decomposed by ``increasing
runs''. Clearly, two analogous notions are obtained by replacing
maximal increasing runs with maximal decreasing runs; that is, if
$\pi=C_1\ldots C_k$, where now the blocks $C_i$ are maximally decreasing,
we say that~$\pi$ is
\begin{align*}
&\text{\emph{dr-superdiagonal}}, && \text{if $c\ge i$ for each $c\in C_i$};\\
&\text{\emph{dr-subdiagonal}}, && \text{if $c\le n+1-i$ for each $c\in C_i$}.
\end{align*}
It is easy to see that $\pi$ is ir-subdiagonal if and only if its
complement is dr-superdiagonal; similarly, it is dr-subdiagonal if and only
if its complement is ir-superdiagonal.
So it is no restriction to only consider subdiagonal permutations, denoted by
\begin{align*}
\irsub &=\{\pi:\pi\text{ is ir-subdiagonal}\};\\
\drsub &=\{\pi:\pi\text{ is dr-subdiagonal}\}.
\end{align*}

In the following two subsections, we shall prove that
$$
\maxhat(\kAscseq{0})=\irsub
\quad\text{and}\quad
\maxhat(\wD)=\drsub,
$$
where $\wD$ denotes the set of weak descent sequences, defined later.
As a result of what was observed
in Subsection~\ref{section_nhat}, ascent
sequences encode the recursive construction of ir-subdiagonal
permutations by successive insertions of a new rightmost entry.
And weak descent sequences encode dr-subdiagonal permutations
in the same way. This construction is reminiscent of the way
ascent sequences encode Fishburn permutations~\cite{BMCDK:2fp},
the difference being that in the case of Fishburn permutations a
new maximum is inserted at each step. Note that we have not been able to find
bivincular patterns characterizing $\irsub$ and $\drsub$.
Finally, we define an isomorphism between
two generating trees for weak descent sequences and \emph{primitive
ascent sequences}, defined as those ascent sequences that have no
flat steps.

\subsection{ir-subdiagonal permutations}\label{sec_irsub}

Throughout this section, we let $\pi=B_1\ldots B_k$ be the decomposition
of a given permutation~$\pi$ into maximal increasing runs.
If $c$ is an entry of $B_j$, $1\le j\le k$, we let $\indx_{\pi}(c)=j$
denote the index of the block of $\pi$ that contains~$c$.
Letting $\pi=p_1\ldots p_n$
and $\pi_i=p_1\ldots p_i$, it is easy to see that
$$
\pi\in\irsub\iff p_i\le |\pi|+1-\indx_{\pi}(p_i)
$$
for each $i$, where
\begin{equation}\label{eq_indx}
\indx_{\pi}(p_i)=i+1-\asc\pi_i.
\end{equation}
The next lemma shows that ir-subdiagonal permutations and ascent
sequences share a similar recursive structure.

\begin{lemma}\label{subd_rec}
Let $\pi=p_1\ldots p_n$ and $a\in [n+1]$. Then
$$
\pi^+a\in\irsub \text{ if and only if } \pi\in\irsub \text{ and } a\le 1+\asc\pi.
$$
\end{lemma}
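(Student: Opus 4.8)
The plan is to analyze the permutation $\sigma=\pi^+a$ of $[n+1]$ directly through its decomposition into maximal increasing runs, using the characterization that a permutation $\tau$ is ir-subdiagonal iff each entry $\tau_i$ satisfies $\tau_i\le|\tau|+1-\indx_\tau(\tau_i)$, together with the index formula $\indx_\pi(p_i)=i+1-\asc\pi_i$. The first observation is that the map $\pi\mapsto\pi^+$ merely relabels values by a monotone bump (adding one to every entry $\ge a$), so the first $n$ entries of $\sigma$ have exactly the same relative order as $\pi$, hence the same ascent set on every prefix and the same block-index function. In particular $\sigma=\pi^+a$ is indeed a permutation of $[n+1]$ (the image of $\pi^+$ is $[n+1]\setminus\{a\}$, and appending $a$ fills the gap), and $\indx_\sigma(\sigma_i)=\indx_\pi(p_i)$ for all $i\le n$. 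The only genuinely new data is the trailing entry $a$.

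First I would dispose of the first $n$ positions. For $i\le n$ we have $\sigma_i\in\{p_i,p_i+1\}$, while the subdiagonal budget for $\sigma$ at block index $j=\indx_\pi(p_i)$ is $n+2-j$, which is one larger than the budget $n+1-j$ that $\pi\in\irsub$ already guarantees for $p_i$. Hence $\sigma_i\le p_i+1\le n+2-j$ automatically, so the subdiagonality of $\sigma$ collapses entirely to the single condition imposed by the last entry $a$.

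Next I would pin down the block index of $a$ by comparing it with the bumped last entry $\pi^+_n$. Since $\pi^+$ omits the value $a$, exactly one of two things happens: if $p_n<a$ then $\pi^+_n=p_n<a$, so $a$ extends the final run and $\indx_\sigma(a)=k$; if $p_n\ge a$ then $\pi^+_n=p_n+1>a$, so $a$ begins a new run and $\indx_\sigma(a)=k+1$, where $k=n+1-\asc\pi$ is the number of runs of $\pi$. Feeding these into the subdiagonal condition $a\le n+2-\indx_\sigma(a)$ and substituting $k=n+1-\asc\pi$ yields the threshold $a\le1+\asc\pi$ in the first case and $a\le\asc\pi$ in the second.

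The final step, which carries the real content, is to reconcile these two thresholds. The key input is that $p_n$ lies in the last block $B_k$ of $\pi$, so $\pi\in\irsub$ forces $p_n\le n+1-k=\asc\pi$. In the case $p_n\ge a$ this gives $a\le p_n\le\asc\pi$, so the condition $a\le\asc\pi$ holds unconditionally and a fortiori $a\le1+\asc\pi$; thus both sides of the claimed equivalence are automatically true there. In the case $p_n<a$ the requirement for $\sigma\in\irsub$ is precisely $a\le1+\asc\pi$, matching the claim exactly. Combining the two cases establishes the biconditional. I expect the main obstacle to be exactly this reconciliation: one must notice that the inequality $p_n\le\asc\pi$, itself a consequence of $\pi\in\irsub$, is what dissolves the seemingly stronger bound $a\le\asc\pi$ of the second case, and one must keep the two parallel dichotomies---``extend versus start a new run'' and the value comparison of $a$ against $p_n$---carefully aligned throughout.
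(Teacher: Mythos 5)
Your proof is correct and follows essentially the same route as the paper's: first show the bumped prefix entries automatically satisfy the enlarged budget $n+2-\indx$, then split on whether $a$ extends the last run or starts a new one, and use $p_n\le\asc\pi$ (subdiagonality of $\pi$ at its last entry) to see that in the new-run case both sides of the equivalence hold automatically. If anything, your case split on $p_n\ge a$ versus $p_n<a$ is slightly cleaner than the paper's, which splits on $a<p_n$ versus $a>p_n$ and leaves the boundary case $a=p_n$ implicit.
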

\begin{proof}
We will prove the reverse implication as the forward one is similar.
We start by showing that entries in the prefix $\pi^+$ satisfy the
subdiagonality constraint in $\pi^+a$.
Suppose that $\pi=B_1 \ldots B_k$ is the increasing run decomposition of $\pi$ so that 
$\max B_i \le n-i+1$ for $i\in[k]$ since $\pi\in\irsub$.  It follows that 
$\pi^+=B_1^+ \ldots B_k^+$ is the increasing run decomposition of $\pi^+$ and
$$
\max B_i^+ \le \max B_i +1\le (n+1) - i + 1.
$$
So $\pi^+$ satisfies the ir-subdiagonal restrictions as the initial factor of $\pi^+ a$.

There remains to show that $a$ also satisfies the  ir-subdiagonal restriction.  There will be two cases depending on its size relative to $p_n$.
Suppose first that $a\le p_n$.  Then we have the  increasing run decomposition $\pi^+ a = B_1^+ \ldots B_k^+ B_{k+1}$ where $B_{k+1}=a$,
and the desired inequality is $a\le (n+1)-(k+1)+1$.  But since $p_n\in B_k$ we have $p_n\le n-k+1$ which, combined with $a\le p_n$, finishes this case.
If $a>p_n$ then our increasing run decomposition is $\pi^+ a = B_1^+ \ldots B_{k-1}^+ B_k'$ where $B_k' = B_k^+ a$.  Since $B_k$ is the last run of $\pi$ we have $k=n-\asc\pi+1$.
In this case we want $a\le (n+1)-k+1$.  But by the equation for $k$ and inequality for $a$ assumed in this direction
$$
(n+1)-k+1 =(n+1)-(n-\asc\pi+1)+1=1 + \asc\pi\ge a
$$
which finishes the proof.
\end{proof}


\begin{theorem}\label{subD_iff}
Let $\al\in\I$ and let $\alh=\maxhat(\al)$. Then
$$
\al\in\kAscseq{0}\iff\alh\in\irsub.
$$
Therefore, $\maxhat$ restricts to a bijection from $\kAscseq{0}$
to $\irsub$.
\end{theorem}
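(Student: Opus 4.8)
The plan is to prove the biconditional $\al\in\kAscseq{0}\iff\maxhat(\al)\in\irsub$ by induction on $n=|\al|$, the cases $n\le 1$ being immediate, and then to deduce the bijection statement formally. For $n\ge 2$ I would write $\al=\be a$ with $\be\in\I_{n-1}$ and $a\in[n]$, and set $\pi=\maxhat(\be)\in\Sym_{n-1}$, so that by Lemma~\ref{lemma_nhat} we have $\alh=\maxhat(\al)=\pi^+a$. The one global fact I need is $\asc\be=\asc\pi$: indeed $\pi=\maxhat(\be)=\khat{n-1}(\be)$ lies in $H(\be)$ by Lemma~\ref{size_of_H}, so Theorem~\ref{props_of_H}(a) gives $\Asc\pi=\Asc\be$. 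I would also use throughout the recursive description of ascent sequences, namely that $\al=\be a\in\kAscseq{0}$ iff $\be\in\kAscseq{0}$ and $a\le 1+\asc\be$.

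The forward implication is the easy half. Assuming $\al\in\kAscseq{0}$, the recursive description gives $\be\in\kAscseq{0}$ and $a\le 1+\asc\be=1+\asc\pi$. The induction hypothesis applied to $\be$ yields $\pi\in\irsub$, and then Lemma~\ref{subd_rec} immediately gives $\pi^+a=\alh\in\irsub$. So this direction just feeds the ascent-sequence bound into Lemma~\ref{subd_rec}.

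The converse is where the real work lies, since Lemma~\ref{subd_rec} presupposes $\pi\in\irsub$, whereas here I only know $\pi^+a\in\irsub$ and must recover \emph{both} $a\le 1+\asc\pi$ and $\pi\in\irsub$, in that order. First I would extract the bound on $a$: the last letter $a$ of $\pi^+a$ lies in the final increasing run, whose index equals the total number $k'$ of runs, and combining subdiagonality of $\pi^+a$ with the run-count identity $k'=n+1-\asc(\pi^+a)$ gives $a\le n+1-k'=\asc(\pi^+a)$; since $\pi^+$ is order-isomorphic to $\pi$ and appending one letter creates at most one new ascent, $\asc(\pi^+a)\le 1+\asc\pi$, whence $a\le 1+\asc\pi$. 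Next I would show $\pi\in\irsub$. Because $\pi^+$ is order-isomorphic to $\pi$, appending $a$ does not change the run-index of any original entry, so an entry $c$ in block $B_i$ of $\pi$ has image $c^+\in\{c,c+1\}$ in block $i$ of $\pi^+a$, and subdiagonality there reads $c^+\le n+1-i$. If $c\ge a$ then $c^+=c+1$, so $c\le n-i=(n-1)+1-i$ directly; if $c<a$ then $c^+=c$, and using the just-proved bound $a\le 1+\asc\pi$ together with $\asc\pi=n-k$ (where $k$ is the number of runs of $\pi$ and $|\pi|=n-1$) and $i\le k$, we get $c\le a-1\le\asc\pi=n-k\le n-i$. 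In either case $\max B_i\le (n-1)+1-i$ for every block, i.e.\ $\pi\in\irsub$. The induction hypothesis then gives $\be\in\kAscseq{0}$, and combined with $a\le 1+\asc\pi=1+\asc\be$ the recursive description yields $\al=\be a\in\kAscseq{0}$.

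Finally, the ``Therefore'' clause follows formally: by Corollary~\ref{HatMatCor} the map $\maxhat\colon\I\to\Sym$ is a bijection, so its restriction to $\kAscseq{0}$ is injective, and the biconditional just proved identifies its image with precisely $\irsub$ (every $\ga\in\irsub$ has a unique $\maxhat$-preimage $\al\in\I$, which the biconditional forces to lie in $\kAscseq{0}$); hence $\maxhat$ restricts to a bijection $\kAscseq{0}\to\irsub$. I expect the main obstacle to be exactly the boundary subcase $c<a$ in the converse: a subdiagonality violation of $\pi$ can be masked by the order-isomorphic shift $(\cdot)^+$, and only the \emph{independently derived} inequality $a\le 1+\asc\pi$ rules this out, which is why the two halves of the converse must be established in the correct order rather than by a direct appeal to Lemma~\ref{subd_rec}.
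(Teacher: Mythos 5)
Your proof is correct, and its converse direction is in fact more careful than the paper's own argument. The paper runs the same induction with the same ingredients (the decomposition $\al=\be a$, Lemma~\ref{lemma_nhat}, the equality $\asc\be=\asc\beh$ from Theorem~\ref{props_of_H}, and Lemma~\ref{subd_rec}), but it dispatches both directions at once by chaining biconditionals: $\al\in\kAscseq{0}\iff a\le 1+\asc\be=1+\asc\beh\iff\alh=\beh^{+}a\in\irsub$, citing Lemma~\ref{subd_rec} for the last step. Read literally, that lemma carries the hypothesis $\pi\in\irsub$, so the backward leg of the chain --- passing from $\beh^{+}a\in\irsub$ to the inequality $a\le 1+\asc\beh$ --- silently assumes $\beh\in\irsub$, which is exactly the point you isolated. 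Your converse supplies the missing ``downward closure'' fact: from subdiagonality of $\beh^{+}a$ you first extract $a\le 1+\asc\beh$ via the last run, and then deduce $\beh\in\irsub$ block by block, with the delicate subcase $c<a$ (where a violation in $\beh$ could be masked by the shift $(\cdot)^{+}$) killed off by that independently derived inequality; your remark that the two halves must be proved in this order is precisely right. So the global strategy is the paper's, but your version is self-contained and closes a genuine gap in the published proof, at the cost of the extra run-index bookkeeping; the paper's version buys brevity by leaving the implication $\pi^{+}a\in\irsub\Rightarrow\pi\in\irsub$ unstated and unproved. Your derivation of the final bijection statement from Corollary~\ref{HatMatCor} matches the intended argument.
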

\begin{proof}
We use induction on the size of $\al$ where the result is clear for
size at most one. Let $\al=\be a$,
for some $\be\in\I_n$. By Lemma~\ref{lemma_nhat}, we have
$\alh=\beh^+a$. Using induction, we have that $\be\in\kAscseq{0}$
if and only if $\beh\in\irsub$. Now using Theorem~\ref{props_of_H}
we have
$$
\al\in\kAscseq{0} \iff\  \be\in\kAscseq{0} \text{ and } a\le 1+\asc\be =1+\asc\beh.
$$
But, by the lemma just proved, the inequality is equivalent
to $\alh=\beh^+a\in\irsub$ as desired.
\end{proof}

We have just shown that  the set $\irsub$
of ir-subdiagonal permutations is the bijective image of the
set~$\kAscseq{0}$ of ascent sequences under $\maxhat$.
Furthermore, by Proposition~\ref{flatsteps} primitive ascent sequences are in bijection with
ir-subdiagonal permutations avoiding~$\mathfrak{a}$.
The next corollary follows immediately.

\begin{corollary}
For each $n\ge 0$, the number of ir-subdiagonal permutations of size~$n$
is equal to the $n$th Fishburn number, that is, the number of ascent sequences of length $n$. Furthermore, the number of
ir-subdiagonal permutations avoiding~$\mathfrak{a}$ is equal to the
number of primitive ascent sequences (see also A138265~\cite{oeis}).
\hfill\qed
\end{corollary}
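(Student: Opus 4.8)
The plan is to obtain both statements as immediate consequences of the bijections already established, so that essentially no new argument is required.

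For the first statement, I would simply invoke Theorem~\ref{subD_iff}, which provides a size-preserving bijection $\maxhat:\kAscseq{0}\to\irsub$. It follows at once that the number of ir-subdiagonal permutations of size~$n$ equals the number of ascent sequences of length~$n$, and the latter is the $n$th Fishburn number by the foundational enumeration of Bousquet-M\'{e}lou~{\etal}~\cite{BMCDK:2fp} that places ascent sequences among the canonical Fishburn structures. This settles the first claim with no computation.

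For the second statement, the key point I would use is that $\maxhat:\I\to\Sym$ is a bijection on the whole of~$\I$ by Corollary~\ref{HatMatCor}; consequently it carries the intersection of two subsets to the intersection of their images. I would write the set of primitive ascent sequences as $\kAscseq{0}\cap N$, where $N$ denotes the set of inversion sequences with no flat steps. Theorem~\ref{subD_iff} sends $\kAscseq{0}$ bijectively onto $\irsub$, while Proposition~\ref{flatsteps} sends $N$ bijectively onto the set of permutations avoiding~$\mathfrak{a}$. Since $\maxhat$ is injective, the image of $\kAscseq{0}\cap N$ is precisely $\irsub$ intersected with the $\mathfrak{a}$-avoiders; that is, primitive ascent sequences are mapped bijectively onto the ir-subdiagonal permutations avoiding~$\mathfrak{a}$. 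Comparing cardinalities then yields the second equality, and the reference to \textup{A138265} identifies the resulting sequence.

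I do not expect any genuine obstacle, since all the combinatorial content has been front-loaded into Theorem~\ref{subD_iff}, Proposition~\ref{flatsteps}, and Corollary~\ref{HatMatCor}. The only step deserving a moment's attention is the intersection argument: one must observe that for any bijection $f$ one has $f(A\cap B)=f(A)\cap f(B)$, which is exactly what allows the two separately-established restricted bijections to be combined into a single bijection between primitive ascent sequences and $\mathfrak{a}$-avoiding ir-subdiagonal permutations.
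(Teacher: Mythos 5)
Your proposal is correct and follows essentially the same route as the paper: the first claim is read off from the restricted bijection $\maxhat:\kAscseq{0}\to\irsub$ of Theorem~\ref{subD_iff}, and the second by combining that result with Proposition~\ref{flatsteps}, exactly as the paper does in the two paragraphs preceding the corollary. Your explicit remark that an injective map satisfies $f(A\cap B)=f(A)\cap f(B)$ merely spells out the step the paper leaves implicit when it intersects the two restricted bijections.
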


\subsection{dr-subdiagonal permutations and weak descent sequences}\label{sec_drsub}

Recall that the set of weak descents of $\al\in\End$ is
$$
\wDes\al =\{i\ge 2\mid a_i\le a_{i-1} \}
$$
Note that $[n]=\wDes\al\uplus\Asc\al$ for every $\al\in\End_n$;
that is, every $i\in [n]$ is either a weak descent or a strict
ascent. The set $\wD$ of \emph{weak descent sequences} is defined as
$$ 
\wD_n=
\{\al\in\I_n\mid a_1=1 \text{ and } a_i\le 1+\wdes\al_{i-1}\text{ for each }i\in[n]\},
$$
where $\wdes\al=|\wDes\al|$.

The next result is a counterpart of Theorem~\ref{subD_iff} and
states that~$\al\in\I$ is a weak descent sequence if and only if
$\maxhat(\al)$ is dr-subdiagonal. Its proof is obtained by simply
replicating the steps of Lemma~\ref{subd_rec} and
Theorem~\ref{subD_iff}, and  is thus omitted.
%

\begin{theorem}
Let $\pi=p_1\ldots p_n$ and $a\in[n+1]$.  Then
$$
\pi^+a\in\drsub \text{ if and only if }
\pi\in\drsub \text{ and }
a\le1+\wdes\pi.
$$
Furthermore, if $\al\in\I$ and $\alh=\maxhat(\al)$,
then
$$
\al\in\wD\iff\alh\in\drsub
$$
and $\maxhat$ restricts to a bijection from $\wD$ to $\drsub$.
\hfill{\qed}
\end{theorem}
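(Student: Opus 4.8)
The plan is to mirror the development for ir-subdiagonal permutations in Subsection~\ref{sec_irsub}, swapping the roles of increasing and decreasing runs and of ascents and weak descents. Write $\pi=C_1\ldots C_k$ for the decomposition of $\pi=p_1\ldots p_n$ into maximal \emph{decreasing} runs, and for an entry $p_i$ let $\indx_\pi(p_i)$ be the index $j$ of the block $C_j$ containing it. Since in a permutation a new decreasing run begins exactly at a strict ascent, counting run-starts through position $i$ gives the analogue of~\eqref{eq_indx}, namely
$$
\indx_\pi(p_i)=\asc\pi_i=i-\wdes\pi_i,
$$
using $\asc\pi_i+\wdes\pi_i=i$. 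In this notation $\pi\in\drsub$ if and only if $p_i\le n+1-\indx_\pi(p_i)$ for every $i$.

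First I would prove the recursive criterion (the first sentence of the theorem), imitating Lemma~\ref{subd_rec}. Fix $\pi\in\drsub$ and $a\in[n+1]$ and form $\pi^+a$. Because incrementing the entries $\ge a$ preserves the ascent set by Lemma~\ref{dasc_beplus}, and hence the weak-descent set, the break points of the decreasing-run decomposition of the prefix are unchanged, so $\indx_{\pi^+a}(c^+)=\indx_\pi(c)$ for each entry $c$ of $\pi$ with image $c^+$ in $\pi^+$. As $c^+\le c+1$, the bound $c\le n+1-\indx_\pi(c)$ upgrades to $c^+\le n+2-\indx_{\pi^+a}(c^+)$, so every prefix entry stays subdiagonal. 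It then remains to analyse the appended entry $a$, whose block depends on whether position $n+1$ is an ascent of $\pi^+a$: comparing $a$ with $p_n^+$ shows that $a\le p_n$ yields a descent (so $a$ continues the last decreasing run and $\indx_{\pi^+a}(a)=\indx_\pi(p_n)$), while $a>p_n$ yields an ascent (so $a$ opens a new run and $\indx_{\pi^+a}(a)=\indx_\pi(p_n)+1$). Using $\indx_\pi(p_n)=\asc\pi=n-\wdes\pi$, the subdiagonality bound on $a$ becomes $a\le 2+\wdes\pi$ in the first case and $a\le 1+\wdes\pi$ in the second. In the first case both $\pi^+a\in\drsub$ and $a\le 1+\wdes\pi$ hold automatically, since $a\le p_n\le n+1-\indx_\pi(p_n)=1+\wdes\pi$; in the second case the bound on $a$ is exactly $a\le 1+\wdes\pi$. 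Hence $\pi^+a\in\drsub\iff a\le 1+\wdes\pi$.

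With the criterion in hand, the second statement follows exactly as in Theorem~\ref{subD_iff}. Induct on $|\al|$, the base cases being immediate. Write $\al=\be a$; by Lemma~\ref{lemma_nhat} we have $\maxhat(\al)=\maxhat(\be)^+a$, and by induction $\be\in\wD\iff\maxhat(\be)\in\drsub$. Since $\maxhat(\be)\in H(\be)$, part~(b) of Theorem~\ref{props_of_H} gives $\wdes\maxhat(\be)=\wdes\be$. Therefore
$$
\al\in\wD\iff a\le 1+\wdes\be=1+\wdes\maxhat(\be)\iff\maxhat(\be)^+a\in\drsub\iff\maxhat(\al)\in\drsub,
$$
the middle equivalence being the criterion applied to $\pi=\maxhat(\be)$. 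Finally, since $\maxhat:\I\to\Sym$ is a size-preserving bijection by Corollary~\ref{HatMatCor} and $\wD\subseteq\I$ corresponds under it exactly to $\drsub$, the restriction $\maxhat:\wD\to\drsub$ is a bijection.

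The main obstacle is purely bookkeeping: getting the decreasing-run index formula right and tracking the appended entry correctly. The subtle point is that in \emph{both} the increasing- and decreasing-run arguments the binding case is $a>p_n$ (an ascent at the new final position), yet an ascent \emph{continues} an increasing run while it \emph{opens} a new decreasing run. This is precisely why the index shifts by one here where it did not in Lemma~\ref{subd_rec}, and why the threshold involves $\wdes$ rather than $\asc$. As a consistency check one may use the duality noted in Section~\ref{sec_subd}, namely that $\pi$ is dr-subdiagonal iff its complement is ir-superdiagonal, but the direct mirror of Lemma~\ref{subd_rec} and Theorem~\ref{subD_iff} is the cleanest route.
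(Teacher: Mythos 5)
Your proposal is correct and takes essentially the same approach as the paper: the paper omits this proof, stating that it is obtained by replicating Lemma~\ref{subd_rec} and Theorem~\ref{subD_iff}, and your argument carries out exactly that replication, with the correct decreasing-run index formula $\indx_{\pi}(p_i)=\asc\pi_i=i-\wdes\pi_i$ and the correctly shifted case analysis for the appended letter (where $a>p_n$ now opens a new run instead of continuing one). The only deviation is that you are slightly more careful than the paper's template, e.g.\ in handling the boundary case $a=p_n$ and in noting why the binding case produces the threshold $1+\wdes\pi$.
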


We wish to prove that weak descent sequences (and thus dr-subdiagonal
permutations) are equinumerous with primitive ascent sequences.
A generating tree for ascent sequences is encoded by the following
generating rule, where the pair $(a,\ell)$ keeps track of the number
of ascents, $a$, and the last letter, $\ell$:
$$
\begin{cases}
\text{Root: }(1,1)\\
(a,\ell)\longrightarrow (a,1)(a,2)\ldots(a,\ell-1)(a,\ell)
(a+1,\ell+1)(a+1,\ell+2)\ldots(a+1,a+1).
\end{cases}
$$
The above rule encodes the standard construction of ascent sequences
by insertion of a new rightmost entry. The root $(1,1)$ corresponds
to the only ascent sequence of size one, namely the single letter
word~$1$. Further, if $\al\in\kAscseq{0}$ has $a$ ascents and last
letter~$\ell$, then it produces $a+1$ children by insertion of a
new rightmost entry $i\in [a+1]$. If $i\le\ell$, then the number of
ascents remains the same; otherwise, if $i>\ell$, then a new ascent
is created. To obtain a generating rule for primitive ascent
sequences, we remove the child $(a,\ell)$ corresponding to a
flat step and obtain:
$$
\Omega:\begin{cases}
\text{Root: }(1,1)\\
(a,\ell)\longrightarrow (a,1)(a,2)\ldots(a,\ell-1)
(a+1,\ell+1)(a+1,\ell+2)\ldots(a+1,a+1).
\end{cases}
$$
A generating rule for weak descent sequences that uses the number of
weak descents~$w$ and the last letter~$u$ as parameters is now
obtained similarly as:
$$
\Theta:\begin{cases}
\text{Root: }(0,1)\\
(w,u)\longrightarrow (w+1,1)(w+1,2)\ldots(w+1,u)
(w,u+1)(w,u+2)\ldots(w,w+1).
\end{cases}
$$
In this case, inserting $i\in [w+1]$ creates a new weak descent if and
only if $i\le u$.

To show that primitive ascent sequences and weak
descent sequences are equinumerous, we shall give a bijection between
the generating trees encoded by the rules $\Omega$ and~$\Theta$.
Namely, we show that $\Omega$ and $\Theta$ are equivalent under
the linear transformation
\begin{equation}\label{genrule}
\begin{cases}
w=a-1\\
u=a-\ell+1
\end{cases}
\iff\;\;\,
\begin{cases}
a-w=1\\
\ell+u=a+1.
\end{cases}
\end{equation}
Indeed, the root $(a,\ell)=(1,1)$ is mapped to $(w,u)=(0,1)$.
Further, assume that $(a,\ell)$ is mapped to $(w,u)$, i.e. that $w=a-1$ and $u=a+1-\ell$.
Then the children of $(a,\ell)$ are mapped bijectively to the children
of $(w,u)$, since
\begin{align*}
(a,1)        &\,\mapsto\, (a-1,a)=(w,w+1)\\
(a,2)        &\,\mapsto\, (a-1,a-1)=(w,w)\\
&\;\;\vdots\\
(a,\ell-1)   &\,\mapsto\, (a-1,a-\ell+2)=(w,u+1)\\
(a+1,\ell+1) &\,\mapsto\, (a,a-\ell+1)=(w+1,u)\\
(a+1,\ell+2) &\,\mapsto\, (a,a-\ell)=(w+1,u-1)\\
&\;\;\vdots\\
(a+1,a+1)    &\,\mapsto\, (a,1)=(w+1,1).
\end{align*}
As a result, we obtain a bijection between the generating tree of
primitive ascent sequences,
encoded by $\Omega$, and the generating tree of $\wD$,
encoded by $\Theta$. The next result follows immediately.

\begin{corollary}
For each $n\ge 0$, the number of weak descent sequences of size~$n$
is equal to the number of primitive ascent sequences
of size~$n$.
\hfill{\qed}
\end{corollary}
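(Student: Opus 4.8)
The plan is to read off the count directly from the generating-tree isomorphism just assembled, rather than building a bijection between the sequences themselves. The rules $\Omega$ and $\Theta$ construct primitive ascent sequences and weak descent sequences, respectively, by successively inserting a new rightmost entry, so that in each tree the objects of size $n$ sit exactly at depth $n$. The linear change of parameters $w=a-1$, $u=a-\ell+1$ has already been verified to carry the root $(1,1)$ of $\Omega$ to the root $(0,1)$ of $\Theta$ and to carry the children of any node $(a,\ell)$ bijectively to the children of $(w,u)$. My goal is simply to promote this node-level correspondence to a size-preserving bijection.

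First I would make explicit that each rule enumerates its family by size. For $\Omega$, the standard ascent-sequence tree appends a rightmost letter $i\in[a+1]$ to a sequence with last letter $\ell$ and $a$ ascents; the deleted child $(a,\ell)$ is precisely the insertion that produces a flat step, so its removal leaves exactly the primitive ascent sequences, and the label $(a,\ell)$ together with the path to the root records the entire insertion history. Hence depth-$n$ nodes of $\Omega$ are in bijection with primitive ascent sequences of size $n$, and the identical argument (with weak descents in place of ascents) identifies depth-$n$ nodes of $\Theta$ with $\wD_n$. I would then argue by induction on depth that the parameter change is a genuine isomorphism of rooted trees: the base case is the matching of roots, and the inductive step is exactly the child-by-child correspondence already checked, which extends a bijection between nodes at depth $n$ to one between nodes at depth $n+1$. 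Chaining these bijections gives $\#\wD_n$ equal to the number of primitive ascent sequences of size $n$.

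The genuinely combinatorial step is already behind us, since matching the list of children of $(a,\ell)$ under $\Omega$ with the list of children of $(w,u)$ under $\Theta$ was carried out in the display preceding the statement. What remains is only bookkeeping: packaging that node-level bijection as a depth-preserving, hence size-preserving, map. Accordingly I anticipate no real obstacle, beyond being careful that the label $(a,\ell)$ indeed determines the admissible children and that distinct sequences occupy distinct nodes, so that the tree isomorphism faithfully transports cardinalities at each level.
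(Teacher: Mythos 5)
Your proposal is correct and follows essentially the same route as the paper: the paper also deduces the corollary directly from the isomorphism between the generating trees encoded by $\Omega$ and $\Theta$ under the substitution $w=a-1$, $u=a-\ell+1$, treating the corollary as an immediate consequence. The extra bookkeeping you supply (identifying depth-$n$ nodes with size-$n$ sequences and inducting on depth) is exactly the content the paper leaves implicit.
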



\section{Difference Fishburn permutations}\label{sec_dperm}

Prompted by a question in~\cite{DS:das}, Zang and Zhou~\cite{ZZ:dperm}
have recently introduced \emph{$d$-Fishburn permutations}, defined as follows.
Fix $d\ge 0$ and let $\pi=p_1\ldots p_n$ be a permutation of $\Sym_n$,
with $n\ge 1$. 
We denote by $\pi^{(k)}$ the subsequence of $\pi$ which contains the
elements $[k]$. For example,  if $\pi=641523$ then $\pi^{(4)}=4123$.
The following procedure defines the \emph{$d$-active elements} of $\pi$:
\begin{itemize}
\item Set $1$ to be a $d$-active element.
\item For $k=2,3,\dots,n$, let $k$ be $d$-inactive if $k$ is to the
left of $k-1$ in $\pi$ and there exist at least $d$ elements of $\pi^{(k)}$ 
between $k$ and $k-1$ that are $d$-active.
Otherwise, $k$ is said to be $d$-active.
\end{itemize}
Returning to our example $\pi=641523$ with $d=2$ we compute the $d$-active elements as follows, where such elements are set in boldface.
By the initial condition
$$
\pi^{(1)} = {\bf 1}.
$$
Next
$$
\pi^{(2)} = {\bf 12}.
$$
since $2$ is to the right of $1$ and so will be active.  Similarly
$$
\pi^{(3)} = {\bf 123}.
$$
Now
$$
\pi^{(4)} = 4{\bf123}
$$
with $4$ not active since the number of active elements between it and $3$ is $2\ge d$.  Clearly
$$
\pi^{(5)} = 4{\bf1523}.
$$
Finally
$$
\pi=\pi^{(6)} = {\bf6}4{\bf1523}
$$
where $6$ is active since the number of active elements between it and $5$ is $1<d$.

Let $\Act_d\pi$ denote the set of $d$-active elements of $\pi$. 
Furthermore, denote by $\Ascbot\pi$ the set
$$
\Ascbot\pi=\{ p_i\in[n-1] \mid p_i<p_{i+1}\}
$$
of \emph{ascent bottoms} of $\pi$. 
Note that these are elements of $\pi$ rather than positions.
Then, $\pi$ is said to be a
\emph{$d$-Fishburn permutation} if
$$
\Ascbot\pi\subseteq\Act_d\pi,
$$
and we denote by $\F_d$ the set of $d$-Fishburn permutations.
Recall that Fishburn permutations~\cite{BMCDK:2fp} are defined
as those permutations avoiding the bivincular pattern
$$
\fishpattern=
\begin{tikzpicture}[scale=0.35, baseline=17.5pt]
\fill[NE-lines] (1,0) rectangle (2,4);
\fill[NE-lines] (0,1) rectangle (4,2);
\draw (0.001,0.001) grid (3.999,3.999);
\filldraw (1,2) circle (5pt);
\filldraw (2,3) circle (5pt);
\filldraw (3,1) circle (5pt);
\end{tikzpicture}.
$$

We wish to give an alternative definition of $d$-Fishburn
permutations that is reminiscent of the classical case $d=0$.
We say that a permutation~$\pi$ contains the \emph{$d$-Fishburn pattern}, $\fishpattern_d$, if it contains an occurrence
$p_ip_{i+1}p_j$ of $\fishpattern$ where $p_i$ is $d$-inactive.
The other two elements $p_{i+1}$ and $p_j$ can be either $d$-active
or $d$-inactive.
With a slight abuse, we will use the suggestive notation
$\Sym(\fishpattern_d)$
to denote the set of permutations that do not
contain~$\fishpattern_d$. 

\begin{proposition}
For every $d\ge 0$,
$$
\F_d=\Sym(\fishpattern_d).
$$
\end{proposition}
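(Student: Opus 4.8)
The plan is to unwind both sides of the claimed equality down to the level of individual ascent bottoms and match them occurrence-by-occurrence, so that the proposition reduces to two elementary observations about what an occurrence of $\fishpattern$ looks like. First I would record the exact shape of such an occurrence. The vertical shading forces the first two elements to sit at adjacent positions $i,i+1$, while the horizontal shading between the two lowest rows forces the two smallest values of the occurrence to be consecutive integers; together with the underlying classical pattern $231$ this says that an occurrence $p_ip_{i+1}p_j$ (with $i+1<j$) is exactly a triple satisfying $p_j<p_i<p_{i+1}$ and $p_i=p_j+1$. The immediate consequence I would isolate is \textbf{Fact A}: since every occurrence has $p_i<p_{i+1}$, its first element $p_i$ is an ascent bottom.

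Next I would establish the converse linking statement, \textbf{Fact B}: every $d$-inactive ascent bottom is the first element of some occurrence of $\fishpattern$. Let $v=p_i$ be an ascent bottom, so $p_i<p_{i+1}$, and suppose $v$ is $d$-inactive. By the very definition of $d$-inactive, $v$ lies to the left of $v-1$ (and $v\ge2$, since $1$ is always $d$-active), so the value $v-1$ occupies some position $j>i$. Because $p_{i+1}>v>v-1$ we have $p_{i+1}\ne v-1$, hence $j\ne i+1$ and therefore $j\ge i+2$. Then the triple at positions $i<i+1<j$ carrying the values $v,\,p_{i+1},\,v-1$ satisfies $v-1<v<p_{i+1}$ and $v=(v-1)+1$, i.e.\ it is an occurrence of $\fishpattern$ whose first element $v$ is $d$-inactive.

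With Facts A and B in hand the proposition is a short logical deduction. For $\F_d\subseteq\Sym(\fishpattern_d)$: if every ascent bottom is $d$-active, then by Fact A the first element of any occurrence of $\fishpattern$ is $d$-active, so no occurrence is an occurrence of $\fishpattern_d$ and $\pi$ avoids $\fishpattern_d$. For the reverse inclusion I would argue by contrapositive: if $\pi\notin\F_d$ then some ascent bottom $v$ is $d$-inactive, and Fact B produces an occurrence of $\fishpattern$ with $d$-inactive first element, so $\pi$ contains $\fishpattern_d$ and thus $\pi\notin\Sym(\fishpattern_d)$.

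I expect no serious obstacle; the whole argument rests on reading the two vincular constraints of $\fishpattern$ correctly. The one point needing care is the verification in Fact B that the third element lands strictly to the right of position $i+1$ rather than coinciding with $p_{i+1}$, which is precisely where $p_{i+1}>v>v-1$ is used, together with the observation that $d$-inactivity automatically supplies the ``$v$ to the left of $v-1$'' clause used to locate $v-1$. It is worth noting that the ``at least $d$ active elements between $v$ and $v-1$'' part of inactivity is never needed to build the occurrence—it is only carried along as part of the label ``$d$-inactive.''
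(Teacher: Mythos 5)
Your proof is correct and follows essentially the same route as the paper's: both directions reduce the equality to showing that $\pi$ contains $\fishpattern_d$ if and only if some ascent bottom is $d$-inactive, with the forward direction reading off that the first element of any $\fishpattern$ occurrence is an ascent bottom, and the backward direction using the ``$v$ lies to the left of $v-1$'' clause of $d$-inactivity to manufacture the occurrence $v,\,p_{i+1},\,v-1$. Your verification that the position of $v-1$ is at least $i+2$ (via $p_{i+1}>v>v-1$) is a detail the paper's proof leaves implicit, so if anything your write-up is slightly more careful.
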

\begin{proof}
Let $\pi\in\Sym$. We show that $\pi$ contains
$\fishpattern_d$ if and only if $\Ascbot\pi\not\subseteq\Act_d\pi$.
Initially, suppose that $\pi$ contains an occurrence
$p_ip_{i+1}p_j$ of $\fishpattern_d$. Then $p_i\in\Ascbot\pi$
and $p_i$ is not $d$-active. Thus, $\Ascbot\pi\not\subseteq\Act_d\pi$,
as wanted. On the other hand, suppose that
$\Ascbot\pi\not\subseteq\Act_d\pi$. That is, there is an entry $p_i$
such that $p_i\in\Ascbot\pi$ and $p_i\notin\Act_d\pi$.
Note that $p_i<p_{i+1}$. Further, since $p_i\notin\Act_d\pi$,
by definition of $d$-active site we have $p_i-1=p_j$, for some $j>i$.
Finally, the triple $p_ip_{i+1}p_j$ is an occurrence of
$\fishpattern_d$, finishing the proof.
\end{proof}

Zang and Zhou proved that $\F_0$ coincides with the set of Fishburn
permutations, while $\F_1$ is equal to the set of weak Fishburn
permutations introduced by B\'enyi {\etal}~\cite{BCD:was}.
Further, they showed that $\F_d$ is tree-like in the following sense.

Consider a set $\Pi$ of permutations. As usual, let
$\Pi_n=\Pi\cap\Sym_n$. Say that $\Pi$ is {\em tree-like} if
$\Pi_0=\{\ep\}$ (where $\ep$ is the empty permutation) and, for $n\ge1$,
every $\pi\in\Pi_n$ is obtained by inserting~$n$ into a site of some
$\rho\in\Pi_{n-1}$, called the \emph{parent} of~$\pi$. The spaces
between letters of~$\rho$ into which~$n$ can be inserted are called
the {\em active sites with respect to $\Pi$}, and all other sites
of $\rho$ are said to be {\em inactive}.
Active sites are labeled $1,2,\ldots$ from left to right. The active sites
of $\pi\in\F_d$ are called {\em $d$-active sites} and are the site before
$\pi$ as well as the sites which lie just after a $d$-active element.

Finally, Zang and Zhou generalized the classical encoding of Fishburn
permutations by ascent sequences to $d$-Fishburn permutations and
$d$-ascent sequences; that is, they defined bijections
$$
\Phi_d:\kAscseq{d}\longrightarrow\F_d
$$
by letting, recursively,
\begin{itemize}
\item $\Phi_d(\epsilon)=\epsilon$, and
\item for $n\ge 1$ if $\al=\be a\in\A_{d,n}$ then $\Phi_d(\al)$ is the
result of inserting $n$ into the active site of
$\Phi_d(\be)$ labeled $a$.
\end{itemize}

\subsection{Burge transpose and Fishburn permutations}

The set of \emph{Burge words} is defined as
$$
\Bur_n=\left\lbrace
\binom{u}{\al} \,\big\rvert\, u\in\WI_n,\;\al\in\Cay_n,\;\wDes(u)\subseteq\wDes(\al)
\right\rbrace,
$$
where $\WI_n$ is the subset of $\Cay_n$ consisting of the weakly
increasing Cayley permutations. We define a transposition operation
$T$ on~$\Bur_n$ as follows~\cite{CC:tpb}.
To compute the \emph{Burge transpose} $w^T$ of
$w=\binom{u}{\al}\in\Bur_n$, turn each column of $w$ upside down
and then sort the columns in ascending order with respect to the
top entry, breaking ties by sorting in weakly decreasing order with
respect to the bottom entry. Observe that $T$ is an involution
on $\Bur_n$. Now, let $\identity_n=12\ldots n$ be the identity
permutation. Since $\identity_n$ has no weak descents,
$\binom{\identity_n}{\al}$ is a Burge word for every
$\al\in\Cay_n$. Thus, for any $\al\in\Cay_n$, we can always
pick $\identity_n$ as the top row, and we get a
map $\burget:\Cay_n\to\Sym_n$, defined by
$$
\binom{\identity}{\al}^{\!T}=\binom{\sort(\al)}{\burget(\al)},
$$
for any $\al\in\Cay$, where $\sort(\al)$ is obtained by sorting the
entries of $\al$ in weakly increasing order.
If $\pi\in\Sym$ is a permutation, then $\burget(\pi)=\pi^{-1}$
(and thus $\burget$ is surjective).
Note that the map $\burget$ was originally~\cite{CC:tpb} denoted
by the letter $\ga$.

One of the main advantages of modified ascent sequences and the
Burge transpose is that they give a non-recursive description
of the bijection~$\Phi_0:\kAscseq{0}\to\F_0$.
Indeed~\cite[Corollary 9]{BMCDK:2fp}, if $\alh=\khat{0}(\al)$ is the
modified ascent sequence of~$\al$, then $\burget(\alh)$ is the
Fishburn permutation corresponding to~$\al$ under~$\Phi_0$. With
the ascent sequence $\al=121242232$ of the example before Lemma~\ref{dasc_iff_invseq}, we obtain $\alh=141252232$ and
\begin{align*}
\binom{\identity}{\alh}^{\!T}
&= \binom{1\ 2\ 3\ 4\ 5\ 6\ 7\ 8\ 9}{1\ 4\ 1\ 2\ 5\ 2\ 2\ 3\ 2}^{\!T} \\[1ex]
&= \binom{1\ 1\ 2\ 2\ 2\ 2\ 3\ 4\ 5}{3\ 1\ 9\ 7\ 6\ 4\ 8\ 2\ 5} =
\binom{\sort(\alh)}{\burget(\alh)},
\end{align*}
where $\burget(\alh)=\Phi_0(\al)$.

We wish to use the $d$-hat map to generalize the above construction
to every $d\ge 0$. That is, we shall prove that the diagram
\begin{equation}\label{triangle_any_d}
\begin{tikzpicture}[scale=0.4, baseline=20.5pt]
  \matrix (m) [matrix of math nodes,row sep=3.5em,column sep=7em,minimum width=2em]
  {
    \kAscseq{d} & \F_d  \\
            & \kModasc{d}    \\
  };
  \path[-stealth, semithick]
  (m-1-1) edge node [above, yshift=2pt] {$\Phi_d$} (m-1-2)
  (m-1-1) edge node [below, xshift=-10pt]{$\dhat$} (m-2-2)
  (m-2-2) edge node [right] {$\burget$} (m-1-2);
\end{tikzpicture}
\end{equation}
commutes for every $d\ge 0$ and that all the arrows are size-preserving
bijections. To this end, it will be convenient to let
$$
\img_d=\burget\bigl(\dhat(\dA)\bigr).
$$
Our proof of the commutativity of diagram~\eqref{triangle_any_d} proceeds
in the following steps: First we show that $\burget$ is injective so that
the composition with $\dhat$ is an injective map. Next we demonstrate
that~$\img_d$ has a tree-like structure and describe the active sites
of its permutations. As a corollary, we obtain $\img_d=\F_d$.
The equality $\Phi_d=\burget\circ\,\dhat$ then follows by showing
that $\burget\circ\,\dhat$ has a recursive description that is identical to
the one given for~$\Phi_d$ in terms of active sites.

Let us start by proving that~$\burget$ is injective.
For the rest of this section, given a $d$-ascent sequence~$\al$,
we let~$\alh=\dhat(\al)$ denote the $d$-hat of  $\al$.

\begin{proposition}\label{t-inj}
For all $d,n\ge0$, the map $\burget:\kModasc{d,n}\ra\Sym_n$ is injective.
\end{proposition}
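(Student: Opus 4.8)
The plan is to induct on the length $n$, combining the recursive description of modified $d$-ascent sequences in Definition~\ref{def_rec_dinv} with an explicit reading of the Burge transpose. The base cases $n\le 1$ are immediate, since $\kModasc{d,0}=\{\epsilon\}$ and $\kModasc{d,1}=\{1\}$.

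First I would record the explicit form of $\burget$. Unwinding the definition of the transpose applied to $\binom{\identity_n}{\alh}$, the permutation $\burget(\alh)$ is obtained by listing, for each value $j=1,2,\ldots,\max\alh$ in turn, the positions $i$ with $a'_i=j$ in decreasing order. Thus the entries of $\burget(\alh)$ split into consecutive value-blocks $V_1V_2\cdots V_{\max\alh}$, where $V_j$ is $(\alh)^{-1}(j)$ written decreasingly. In particular the largest position $n$, which carries the last letter $a:=a'_n$, is the leftmost entry of its block $V_a$.

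The crux is a single structural observation relating $\burget(\alh)$ to $\burget(\beh)$, where $\beh\in\kModasc{d,n-1}$ is the sequence from which $\alh$ is built in Definition~\ref{def_rec_dinv}. I claim that deleting the letter $n$ from $\burget(\alh)$ returns exactly $\burget(\beh)$, while the position at which $n$ sits encodes the last letter $a$. To verify this I would treat the two cases of the definition. If $\alh=\beh a$, then the value-blocks of $\alh$ are those of $\beh$ with $n$ prepended to $V_a$, so deleting $n$ restores the blocks of $\beh$. If $\alh=\beh^+ a$, then incrementing the entries $\ge a$ of $\beh$ relabels its blocks $V_a,\ldots,V_{\max\beh}$ as $V_{a+1},\ldots,V_{\max\beh+1}$ without altering their contents (hence neither their internal order nor their order of appearance in the transpose), while the fresh value $a$ occupies the new singleton block $\{n\}$; again deleting $n$ recovers the blocks of $\beh$. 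In both cases $n$ is inserted into $\burget(\beh)$ at the site immediately preceding $\beh$'s value-$a$ block (or at the very end when $a=\max\beh+1$), and these $\max\beh+1$ sites are in bijection with the admissible values $a\in\{1,\ldots,1+\max\beh\}$.

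With this in hand the induction closes quickly. Suppose $\alh,\alh'\in\kModasc{d,n}$ satisfy $\burget(\alh)=\burget(\alh')$. Deleting the maximal letter $n$ gives $\burget(\beh)=\burget(\beh')$ for the respective predecessors in $\kModasc{d,n-1}$, whence $\beh=\beh'$ by the inductive hypothesis. Since $\beh=\beh'$ has a fixed value-block structure, the common position of $n$ singles out one site, hence one last letter $a=a'$; as $\alh$ is determined by the pair $(\beh,a)$ (it equals $\beh a$ when $a\le b-d$ and $\beh^+ a$ otherwise, where $b$ is the now-known last letter of $\beh$), this forces $\alh=\alh'$. The main obstacle is the structural observation itself: one must check carefully that the rescaling $\beh\mapsto\beh^+$ merely shifts the labels of the upper blocks while preserving the permutation read off by $\burget$, so that removing $n$ genuinely reproduces $\burget(\beh)$. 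The compatibility with $\nub$ from Proposition~\ref{dhat_is_cayley} (the $\beh^+$ case occurring exactly when $n\in\nub\alh$) provides a convenient consistency check but is not strictly needed for the argument.
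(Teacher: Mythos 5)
Your proof is correct, and it is organized genuinely differently from the paper's, even though both ultimately rest on the same two structural facts: that incrementing the entries $\ge a$ only relabels value-blocks, so $\burget(\beh^+)=\burget(\beh)$, and that the column $\binom{a}{n}$ lands immediately in front of the value-$a$ block of $\burget(\beh)$ (or at the end when $a=\max\beh+1$). The paper argues contrapositively and splits into cases according to the prefixes of the \emph{unmodified} sequences: when $\be\neq\tau$ it invokes the injectivity of $\dhat$ (Corollary~\ref{dhat_is_bij}) plus the inductive hypothesis, and when $\be=\tau$ it checks that distinct last letters force $n$ into distinct positions. You instead stay entirely inside $\kModasc{d}$ using the recursive description of Definition~\ref{def_rec_dinv} and prove a single deletion statement: removing $n$ from $\burget(\alh)$ recovers $\burget(\beh)$, while the position of $n$ encodes the last letter $a$; the direct induction then closes without any case split at the top level and without appealing to the injectivity of $\dhat$. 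What the paper's route buys is brevity, since it leans on results already proved; what yours buys is self-containedness and a sharper intermediate statement (explicit parent-and-site recovery) that in effect anticipates the tree-like structure of $\img_d$ and the site-labeling facts the paper only establishes later, in Lemmas~\ref{imgd_treelike} and~\ref{act_sites_pos}.
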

\bprf
We fix $d\ge 0$ and use induction on~$n$. Let us set up the following
notation for the rest of the proof. We will consider two $d$-ascent
sequences~$\al$ and~$\omega$ in~$\kAscseq{d,n}$, where
$$
\al=\be a
\quad\text{and}\quad
\omega=\tau w,
$$
for some~$\be$ and $\tau$ in~$\kAscseq{d,n-1}$. We also let~$a$, $b$, $w$ and~$t$
denote the last letter of~$\al$, $\be$, $\omega$ and~$\tau$, respectively. 
By definition of $\dhat$, we have
\begin{align*}
\alh&=\begin{cases}
\beh a  & \text{if $a\le b-d$};\\
\beh^+a & \text{if $a > b-d$};\\
\end{cases}\\
\shortintertext{and}
\omh&=\begin{cases}
\tah w  & \text{if $w\le t-d$};\\
\tah^+w & \text{if $w>t-d$}.\\
\end{cases}
\end{align*}
We assume~$\alh\neq\omh$ and prove that~$\burget(\alh)\neq\burget(\omh)$.

Assume first that $\be\neq\tau$. Since the map $\dhat$ is injective
by Corollary~\ref{dhat_is_bij}, we have $\beh\neq\tah$ and
by induction, $\burget(\beh)\neq\burget(\tah)$. 
Note that sorting $\binom{\id}{\beh}^T$  and $\binom{\id}{\beh^+}^T$
yields the same permutation in the lower row and similarly for $\tah$
and $\tah^+$. So, whichever case of the $\dhat$ map we are in, we will
be placing~$n$ into the two different permutations $\burget(\beh)$
and $\burget(\tah)$ to compute  $\burget(\alh)$ and $\burget(\omh)$.
This must result in distinct permutations, as desired.

Now assume that $\be=\tau$. Since $\al\neq\omega$ it must be that
$a\neq w$ and we can assume, without loss of generality, that $a<w$.
If $a\le b-d$, then $\binom{\id}{\alh}^T$ is computed from
$\binom{\id}{\beh}^T$ by placing the column $\binom{a}{n}$ at the
beginning of the list of columns with top entry~$a$. Note that such
columns must exist because of the given inequality.
If $a>b-d$ then $\binom{\id}{\alh}^T$ is computed from
$\binom{\id}{\beh^+}^T$ by inserting the column $\binom{a}{n}$ between
the columns with top entry~$a-1$ and those with top entry~$a+1$ (if
there are any of the latter). In this case there will be no other columns
with top entry~$a$. It is now a simple matter of checking to show that
in all possible cases the fact that~$a<w$ will force~$n$ to be
in~$\burget(\alh)$ strictly to the left of its appearance
in~$\burget(\omh)$. This completes the proof of injectivity.
\eprf

\begin{corollary}\label{burget_bij}
For every $d\ge 0$, the map $\burget\circ\,\dhat:\A_d\ra\img_d$ is
a bijection.
\end{corollary}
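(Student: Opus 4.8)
The plan is to obtain this corollary as a formal consequence of the two immediately preceding results, since $\burget\circ\,\dhat$ is nothing more than a composition of two injections, corestricted to its own image. First I would recall that $\dhat\colon\dA\to\dModasc$ is a bijection by Corollary~\ref{dhat_is_bij}; in particular it is injective, and its image is exactly $\dModasc=\dhat(\dA)$. Next, Proposition~\ref{t-inj} gives that the restriction of $\burget$ to $\kModasc{d,n}$ is injective for every $n$, so $\burget$ is injective on all of $\dModasc$.

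From here the argument is purely formal. Since a composition of injective maps is injective, $\burget\circ\,\dhat\colon\dA\to\Sym$ is injective. For surjectivity onto the stated codomain I would simply appeal to the definition $\img_d=\burget\bigl(\dhat(\dA)\bigr)$, which is precisely the image of the composition $\burget\circ\,\dhat$. Corestricting an injective map to its own image yields a bijection, so $\burget\circ\,\dhat\colon\dA\to\img_d$ is a bijection, as claimed. (If desired, one can additionally note that the map is size-preserving: $\dhat$ preserves length and $\burget$ sends $\Cay_n$ into $\Sym_n$.)

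I do not anticipate a genuine obstacle in this step, and I would flag exactly that in the writeup. All of the substantive work has already been done upstream: the injectivity of $\dhat$ rests on the disjointness of the sets $H(\al)$ established in Proposition~\ref{Hset_disj}, while the injectivity of $\burget$ on modified $d$-ascent sequences is the content of Proposition~\ref{t-inj}. The corollary is therefore a one-line deduction chaining these two facts together, and the only care required is to keep straight that $\burget$ is being applied on $\dModasc$ (where its injectivity is known) rather than on all of $\Cay$.
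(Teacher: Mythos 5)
Your proof is correct and matches the paper's own argument, which likewise deduces the corollary directly from Corollary~\ref{dhat_is_bij}, Proposition~\ref{t-inj}, and the definition of $\img_d$. The extra care you take---noting that injectivity on each $\kModasc{d,n}$ extends to all of $\dModasc$ because the maps are size-preserving---is a detail the paper leaves implicit, but the route is the same.
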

\begin{proof}
Our claim follows directly from Corollary~\ref{dhat_is_bij},
Proposition~\ref{t-inj} and the definition of $\img_d$.
\end{proof}

\begin{lemma}\label{imgd_treelike}
For all $d\ge0$, the set $\img_d$ is tree-like.
\end{lemma}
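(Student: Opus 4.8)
The plan is to prove tree-likeness directly from the bijection $\burget\circ\,\dhat\colon\A_d\to\img_d$ of Corollary~\ref{burget_bij}, by showing that the recursive construction of $d$-ascent sequences by a new rightmost letter is transported by this bijection into insertion of the new maximum~$n$. First I would record the base case $(\img_d)_0=\{\epsilon\}$, since $\A_{d,0}=\{\epsilon\}$ and both $\dhat$ and $\burget$ fix the empty sequence. For the inductive step, fix $\pi\in(\img_d)_n$ with $n\ge 1$ and let $\al\in\A_{d,n}$ be its unique preimage, so that $\pi=\burget(\alh)$ with $\alh=\dhat(\al)$. Writing $\al=\be a$ with $\be\in\A_{d,n-1}$ (a prefix of a $d$-ascent sequence is again one), I would take $\rho=\burget(\beh)$, which lies in $(\img_d)_{n-1}$, as the candidate parent of~$\pi$, and show that $\pi$ is obtained from $\rho$ by inserting the value~$n$ at a single site.

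The heart of the argument is the behaviour of the Burge transpose under the passage from~$\be$ to~$\al$. By Definition~\ref{def_rec_dinv} we have $\alh=\beh a$ when $a\le b-d$ and $\alh=\beh^+a$ when $a>b-d$, where~$b$ is the last letter of~$\beh$. As already observed in the proof of Proposition~\ref{t-inj}, sorting $\binom{\id}{\beh}^T$ and $\binom{\id}{\beh^+}^T$ produces the same bottom row, so $\burget(\beh)=\burget(\beh^+)=\rho$ in either case; this is because incrementing every entry $\ge a$ is an order-preserving relabelling of the top entries that leaves the tie structure, and hence the sorted order of the columns, unchanged. Thus forming $\alh$ amounts to adjoining the single column $\binom{a}{n}$ to the Burge word whose bottom row is~$\rho$ and re-sorting. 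Since~$n$ is the largest possible bottom entry, ties are broken so that this column is placed at the front of the block of columns with top entry~$a$; in the bottom row this splices in exactly the value~$n$ at one position while leaving the remaining entries in their original relative order. Hence $\pi=\burget(\alh)$ is~$\rho$ with~$n$ inserted at a site, which is precisely what tree-likeness demands.

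The main obstacle is to confirm that the column $\binom{a}{n}$ always has a well-defined insertion site and that its insertion perturbs the bottom row by exactly one new entry. When $a\le b-d$ I would use that $\beh$ is a Cayley permutation by Proposition~\ref{dhat_is_cayley}, together with $a\le b\le\max\beh$, to guarantee that columns with top entry~$a$ already exist, so that~$n$ is inserted immediately before the leftmost of them. When $a>b-d$ I would split into the subcase $a=1+\max\beh$, where $\beh^+=\beh$ and $\binom{a}{n}$ is appended at the end, and the subcase $a\le\max\beh$, where incrementing opens a gap at top value~$a$ that the new column uniquely fills between the blocks of top entries $a-1$ and $a+1$. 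In every case the adjoined column is the unique one contributing the bottom entry~$n$, so the bottom row of the re-sorted word is exactly~$\rho$ with~$n$ inserted at the location of that column. This is the step requiring the most bookkeeping, but it is the same case analysis already carried out in Proposition~\ref{t-inj}, so I would cite that computation rather than repeat it.
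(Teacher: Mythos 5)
Your proposal is correct and takes essentially the same route as the paper's own proof: both decompose $\al=\be a$, take $\rho=\burget(\beh)$ as the parent, rely on the fact that $\burget(\beh)=\burget(\beh^+)$ (since $\beh$ and $\beh^+$ are order isomorphic), and defer the detailed column-insertion case analysis to the computation already done in Proposition~\ref{t-inj}. No gaps.
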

\begin{proof}
Clearly $\ep\in\img_{d,0}$, so let $n\ge1$.
Pick $\pi\in\img_{d,n}$ and suppose $\pi=\burget(\alh)$,
for some $\al\in\dA$. Suppose that $\al=\be a$ and consider
$\rho=\burget(\beh)$. We claim that~$\pi$ is obtained
by inserting $n$ in some site of $\rho$ which will prove the theorem.
As usual, there are two cases depending on whether $\alh=\beh a$ or
$\alh=\beh^+ a$.

Suppose first that $\alh=\beh a$. As in the proof of
Proposition~\ref{t-inj}, $\binom{\id}{\alh}^T$ is obtained from
$\binom{\id}{\beh}^T$ by inserting the column $\binom{a}{n}$ at
the beginning of the columns with top entry~$a$. This means that
$\pi=\burget(\alh)$ is obtained from $\rho=\burget(\beh)$ by
inserting~$n$ in the corresponding site, which proves the claim
in this case.

Consider the second case. Here $\binom{\id}{\alh}^T$ is obtained
from $\binom{\id}{\beh^+}^T$ by inserting the column $\binom{a}{n}$
between the columns with upper entry~$a-1$ and those with upper
entry $a+1$, there being no columns with upper entry $a$.
But since $\beh$ and $\beh^+$ are order isomorphic, it follows again
that insertion of $n$ in the corresponding site of $\rho$ yields~$\pi$.
\end{proof}

Since $\img_d$ is tree-like, it is natural to want a description of
the active sites of $\rho\in\img_d$. By Corollary~\ref{burget_bij},
the map $\burget\circ\,\dhat:\A_d\ra\img_d$ is a bijection. 
If $\be=(\burget\circ\,\dhat)^{-1}(\rho)$ then one could
consider all $d$-ascent sequences of the form $\al=\be a$ for
$1\le a \le \dasc\be+1$.
Computing the permutations $\pi=\burget(\alh)$ for each
such $\al$ and comparing with $\rho$ would accomplish this task. But it would
be nice to have a description of the active sites which can be read off from
the permutation itself as one would do for pattern-avoidance classes.
To this end, given $\rho\in\img_d$, let $\be\in\dA$ be its preimage. Now
$$
\binom{\id}{\beh}^T = \binom{\sort(\beh)}{\rho}.
$$
We shall use the active sites of $\rho$ to define a labeling of the sites
of $\sort(\beh)$ by letting a site of $\sort(\beh)$ be active if and only
if the corresponding site of $\rho$ is active.
In this context, active sites refer to the tree-like structure of $\img_d$
established in Lemma~\ref{imgd_treelike}, and should not be confused
with the active sites with respect to~$\F_d$.  We will first describe
the active sites of $\sort(\beh)$. To do so we need the concept of a
{\em run} in a sequence which is a maximum factor (subsequence of
consecutive elements) consisting of equal elements.

\begin{lemma}\label{sites_sortalh}
Suppose $\be\in\kAscseq{d,n-1}$.
\ben
\item[$(a)$] The elements of the runs of $\sort(\beh)$ are
$1,2,\ldots,\dasc\be$ from left to right.
\item[$(b)$] The active sites of $\sort(\beh)$ are the sites before,
after, or between its runs.
\item[$(c)$] The number of active sites of $\burget(\beh)$ is equal
to $\dasc\be+1$.
\een
\end{lemma}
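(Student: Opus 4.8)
The plan for part $(a)$ is to read the conclusion straight off Proposition~\ref{dhat_is_cayley}. Since $\be\in\kAscseq{d,n-1}$, that proposition says $\beh=\dhat(\be)$ is a Cayley permutation with $\max\beh=\dasc\be$. Hence $\beh$ contains at least one copy of every integer in $\{1,2,\ldots,\dasc\be\}$ and no other values, so sorting its entries weakly increasingly produces a block of $1$'s, then a block of $2$'s, and so on up to a block of $(\dasc\be)$'s. These blocks are exactly the runs of $\sort(\beh)$, and their common values are $1,2,\ldots,\dasc\be$ read from left to right, which is the claim.

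For $(b)$ and $(c)$, the plan is to locate, for each child of $\rho=\burget(\beh)$ in the tree $\img_d$, the gap of $\sort(\beh)$ at which the insertion occurs. Recall from $\binom{\id}{\beh}^T=\binom{\sort(\beh)}{\rho}$ that the columns of the transposed Burge word are shared by the top row $\sort(\beh)$ and the bottom row $\rho$, so a gap between two consecutive columns is at once a site of $\sort(\beh)$ and the corresponding site of $\rho$; this is precisely the correspondence used to transfer the active-site labelling. By the bijection of Corollary~\ref{burget_bij} the preimage of $\rho$ is $\be$, so the children of $\rho$ are the permutations $\burget(\dhat(\be a))$ for $1\le a\le 1+\dasc\be$, and distinct values of $a$ yield distinct children and hence distinct active sites. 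It therefore suffices to determine, for each such $a$, the gap into which the column $\binom{a}{n}$ is inserted.

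The core of the argument is the two-case analysis from the definition of $\dhat$, where $b$ denotes the last letter of $\beh$ (equivalently of $\be$, by Lemma~\ref{final}). If $a\le b-d$, then $\alh=\beh a$ and, as in the proof of Proposition~\ref{t-inj}, the column $\binom{a}{n}$ is placed at the front of the columns with top entry $a$; such columns exist because $a\le b-d\le\dasc\be$, and the tie-breaking rule (weakly decreasing bottom entries, with $n$ maximal) forces $\binom{a}{n}$ to the very start of the run of $a$'s. Thus the insertion gap is the one immediately before the run of $a$'s. If instead $a>b-d$, then $\alh=\beh^+a$ and $\binom{a}{n}$ is inserted, within $\binom{\id}{\beh^+}^T$, between the columns with top entry $a-1$ and those with top entry $a+1$ (none having top entry $a$). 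The subtle point, which I expect to be the main obstacle, is to translate this back to $\sort(\beh)$: because $\beh$ and $\beh^+$ are order isomorphic via the map fixing values below $a$ and raising values at least $a$ by one, the run of value $a-1$ in $\sort(\beh^+)$ is the run of $a-1$ in $\sort(\beh)$, while the run of value $a+1$ in $\sort(\beh^+)$ is the run of $a$ in $\sort(\beh)$. Hence the insertion gap is again the one immediately before the run of $a$'s when $a\le\dasc\be$, and the gap after the last run when $a=\dasc\be+1$ (where $\beh^+=\beh$ and $\binom{a}{n}$ is simply appended).

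Combining both cases, as $a$ runs over $1,2,\ldots,1+\dasc\be$ the insertion gap is the gap before the run of $a$'s for $a\le\dasc\be$ and the gap after the final run for $a=\dasc\be+1$; these are exactly the $\dasc\be+1$ gaps lying before, between, or after the runs of $\sort(\beh)$, and they are pairwise distinct. No insertion can land in a gap interior to a run, since the tie-breaking rule always pushes $\binom{a}{n}$ to the front of its run, so these run-boundary gaps are precisely the active sites of $\sort(\beh)$, giving $(b)$. Finally, $(c)$ follows by counting: by $(a)$ the sequence $\sort(\beh)$ has $\dasc\be$ runs, hence $\dasc\be+1$ boundary gaps, and since the active sites of $\burget(\beh)$ correspond bijectively to those of $\sort(\beh)$, their number is $\dasc\be+1$.
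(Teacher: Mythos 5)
Your proof is correct, and much of it coincides with the paper's: part $(a)$ is read off Proposition~\ref{dhat_is_cayley} in both, and your two-case analysis of where the column $\binom{a}{n}$ lands under the Burge transpose (front of the run of $a$'s when $a\le b-d$; between the runs of $a-1$ and $a+1$ of $\sort(\beh^+)$, i.e.\ the gap before the run of $a$'s of $\sort(\beh)$ or the final gap, when $a>b-d$) is exactly the paper's argument that the run-boundary gaps are active. Where you genuinely diverge is in proving that these are the \emph{only} active sites. The paper does this by a global double count: $\#\img_{d,n}$ equals the sum of the number of active sites over all $\rho\in\img_{d,n-1}$, and via the bijection with $\A_{d,n}$ it also equals the sum of $\dasc\be+1$ over all $\be\in\A_{d,n-1}$; since each parent was shown to have at least $\dasc\be+1$ active sites, equality of the sums forces exactly that many, leaving no room for others. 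You argue locally instead: using Corollary~\ref{burget_bij} and the tree-like structure from Lemma~\ref{imgd_treelike}, the children of a fixed $\rho$ are precisely the $\dasc\be+1$ permutations $\burget(\dhat(\be a))$, $1\le a\le 1+\dasc\be$, and your case analysis shows each of them inserts $n$ into a run-boundary gap, hitting each such gap exactly once; hence no interior gap can ever receive an insertion and the active sites are exactly the boundary gaps. Both routes are sound. Yours is more direct, yielding the conclusion parent-by-parent with no appeal to enumeration of $\img_{d,n}$; the paper's counting argument gets by with only a lower bound on the active sites of each parent and never needs to pin down the full set of children of a given $\rho$, at the price of invoking the size-preserving bijection $\A_{d,n}\to\img_{d,n}$ globally.
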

\begin{proof}
$(a)$ This follows directly from Proposition~\ref{dhat_is_cayley}.

\medskip

$(b)$ Suppose $\al=\be a$ where $1\le a\le \dasc\be + 1$.  
If $a$ does not create a $d$-ascent then $\binom{\id}{\alh}^T$
is obtained from $\binom{\id}{\beh}^T$ by inserting the column
$\binom{a}{n}$ at the beginning of the run of $a$'s in $\sort(\beh)$.
This will be  the site before $\sort(\beh)$ or the site between the run
of $(a-1)$'s and the run of $a$'s.
Now suppose $a$ does cause a $d$-ascent so that $\alh=\beh^+a$.  
Note that the runs of $\sort(\beh$) and $\sort(\beh^+)$ are the same
except that the entries in the latter which are greater than or equal
to~$a$ have been increased by one.
Now $\binom{\id}{\alh}^T$ is obtained from $\binom{\id}{\beh}^T$
by inserting the column $\binom{a}{n}$ after the run of $(a-1)$'s in
$\sort(\beh^+)$. So this will either be between the runs for $a-1$
and $a+1$ or at the end.  This shows that the sites before, after,
or between the runs are indeed active.

To see that these are the only active sites, note that $|\img_{d,n}|$ is
the sum of the number of active sites over all elements of $\img_{d,n-1}$.
Since $\img_{d,n}$ is in bijection with $\A_{d,n}$ we have that
$|\img_{d,n}|$ is also the sum of $\dasc\be+1$ over all $\be\in A_{d,n-1}$.
But in the previous paragraph we showed that there are at least $\dasc\be+1$
active sites in every $\sort(\beh)$. Since the two sums are equal, we must
have exactly $\dasc\be+1$ active sites in every $\sort(\beh)$.
Thus there can be no others.

$(c)$ From Item~$(b)$, the number of active sites of $\burget(\beh)$
is equal to one plus the number of runs of~$\beh$. Our claim follows
immediately since there are exactly $\dasc\be$ runs by Item~$(a)$.
\end{proof}

Next we show that the last letter of a $d$-ascent sequence determines
the active site where the maximum of the corresponding permutation
in~$\img_d$ is inserted.

\begin{lemma}\label{act_sites_pos}
Let $d\ge 0$. Let $\al\in\kAscseq{d,n}$ and let
$\pi=\burget(\alh)\in\img_{d,n}$.
Then $\pi$ is obtained by inserting~$n$ in the $a$th active site of
its parent, where~$a$ is the last letter of~$\al$.
\end{lemma}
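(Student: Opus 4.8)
The plan is to track the single column $\binom{a}{n}$ that distinguishes $\binom{\id}{\alh}^T$ from the transpose of its parent, and to read off from that column's landing position which active site of $\rho=\burget(\beh)$ receives the new maximum~$n$. Write $\al=\be a$, so that $\rho$ is the parent of $\pi$ in the tree-like set $\img_d$ (Lemma~\ref{imgd_treelike}). By Lemma~\ref{sites_sortalh}$(a)$ the runs of $\sort(\beh)$ carry the values $1,2,\dots,\dasc\be$ from left to right, and by part $(b)$ the active sites are exactly the gaps before the first run, between consecutive runs, and after the last run. Labelling these gaps $1,\dots,\dasc\be+1$ from left to right, the $a$th active site is the gap immediately preceding the run of $a$'s when $1\le a\le\dasc\be$, and the gap after the final run when $a=\dasc\be+1$.

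The key preliminary observation is that inserting a column at a given gap of $\binom{\id}{\beh}^T$ corresponds, after re-sorting, to inserting $n$ at the matching site of the bottom row $\rho$; this is precisely the correspondence exploited in the proof of Lemma~\ref{imgd_treelike}. Moreover $\beh$ and $\beh^+$ are order-isomorphic, so the sorting order of their columns is identical and hence $\burget(\beh^+)=\burget(\beh)=\rho$. Thus it makes no difference whether the insertion is described inside $\sort(\beh)$ or inside $\sort(\beh^+)$: the resulting site of $\rho$ is the same.

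Next I would split into the two cases of Definition~\ref{def_rec_dinv}, reusing the computation already carried out in the proof of Lemma~\ref{sites_sortalh}$(b)$. If $a$ does not create a $d$-ascent, so $\alh=\beh a$ with $1\le a\le b-d\le\dasc\be$, then $\binom{\id}{\alh}^T$ arises from $\binom{\id}{\beh}^T$ by placing $\binom{a}{n}$ at the start of the run of $a$'s; this gap is by definition the $a$th active site. If $a$ does create a $d$-ascent, so $\alh=\beh^+a$ with $b-d<a\le\dasc\be+1$, then $\binom{\id}{\alh}^T$ arises from $\binom{\id}{\beh^+}^T$ by placing $\binom{a}{n}$ immediately after the run of $(a-1)$'s in $\sort(\beh^+)$. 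In either case the column $\binom{a}{n}$, having the maximal bottom entry $n$, sorts to the front of its block, so $n$ lands exactly in the inserted gap of $\pi$.

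The one point requiring care---and the main obstacle---is the second case, where the sorted top row is $\sort(\beh^+)$ rather than $\sort(\beh)$, so the gap must still be identified with a site of $\rho$. Here I would note that, since $\beh^+$ only increments entries that are $\ge a$, the first $a-1$ runs of $\sort(\beh^+)$ (carrying the values $1,\dots,a-1$) coincide positionally with the first $a-1$ runs of $\sort(\beh)$. Consequently the gap immediately after the run of $(a-1)$'s is the $a$th gap in either sequence, and by the correspondence of the second paragraph it is the $a$th active site of $\rho$. Combining the two cases shows that $\pi$ is obtained by inserting $n$ into the $a$th active site of its parent, as claimed.
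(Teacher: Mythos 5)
Your proof is correct and follows essentially the same route as the paper: both rest on Lemma~\ref{sites_sortalh} and on tracking where the column $\binom{a}{n}$ lands when $\binom{\id}{\alh}^T$ is formed from $\binom{\id}{\beh}^T$ (or $\binom{\id}{\beh^+}^T$), using that $\beh$ and $\beh^+$ are order isomorphic so the parent $\rho$ is unchanged. The paper's proof is just a more condensed version of yours, citing the insertion description from the proof of Lemma~\ref{sites_sortalh}$(b)$ where you re-derive it, including the run-alignment argument for the $d$-ascent case.
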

\begin{proof}
Suppose that $\al=\be a$, for some $\be\in\kAscseq{d,n-1}$.
As observed in the proof of Item~$(b)$ of Lemma~\ref{sites_sortalh},
the active sites of~$\sort(\beh)$ are the sites before, after, or between
its runs. Since the column $\binom{a}{n}$ is inserted at the beginning
of the run of $a$'s in $\sort(\beh)$, or after the last run if no run
of $a$'s exists, it follows immediately that~$n$ is inserted in the
$a$th active site of its parent.
\end{proof}

We now wish to express the active sites of $\pi\in\img_{d,n}$ in terms
of its parent $\rho\in\img_{d,n-1}$.  We will call the sites of $\rho$
which remain between the same two elements in $\pi$ {\em common}.
In addition, there will be two new sites before and after $n$ in $\pi$.
The following criterion is similar to the one~\cite{DS:das} for the
avoidance class of the bivincular pattern
$\si_d=(d+2)|(d+3)12\dots d \overline{(d+1)}$.

\begin{lemma}\label{actsites_imgd}
Suppose $\pi\in\img_{d,n}$ has parent $\rho\in\img_{d,n-1}$.
Then each common site is either active in both $\pi$ and $\rho$ or
inactive in both.  Also, the site before $n$ is always active in $\pi$.
For the site after $n$, let $s$ and $t$ be the number of active sites
before $n$ in~$\pi$ and before $n-1$ in $\rho$, respectively. Then the
site after $n$ is active if and only if 
$$s>t-d.$$
\end{lemma}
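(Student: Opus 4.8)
The plan is to reduce all three assertions to a single criterion describing, directly in terms of the entries of $\alh$, when a gap of $\pi=\burget(\alh)$ is active. Write $\al=\be a\in\dA$ with $\be\in\kAscseq{d,n-1}$, set $\rho=\burget(\beh)$, and recall from the proof of Lemma~\ref{sites_sortalh} that $\binom{\id}{\alh}^{T}=\binom{\sort(\alh)}{\pi}$. Thus if $\pi=p_1\ldots p_n$, the $i$th column of the transpose has bottom entry $p_i$ and top entry $a'_{p_i}$, the entry of $\alh$ in position $p_i$; in particular $\sort(\alh)_i=a'_{p_i}$. Combining this with Lemma~\ref{sites_sortalh}(b), which locates the active sites at the breaks between the runs of $\sort(\alh)$, yields the key observation: the gap between consecutive entries $x,y$ of $\pi$ is active if and only if $a'_x<a'_y$, the two boundary gaps being always active. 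The same holds verbatim for $\rho$ with $\beh$ in place of $\alh$. Every comparison below is then a comparison of $\alh$-entries against $\beh$-entries.

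For the common sites I would invoke the dichotomy~\eqref{notation1}. If $a\le b-d$ then $\alh=\beh a$, so $a'_x=b'_x$ for all $x\le n-1$; hence for two entries $x,y\neq n$ one has $a'_x<a'_y$ exactly when $b'_x<b'_y$, and the gap is active in $\pi$ iff it is active in $\rho$. If instead $a>b-d$ then $\alh=\beh^{+}a$, so on positions $1,\dots,n-1$ the entries are relabelled by $v\mapsto v+[v\ge a]$, a strictly increasing map; since a strictly monotone relabelling preserves all strict inequalities, again $a'_x<a'_y\iff b'_x<b'_y$ for $x,y\neq n$. In either case a common gap is active in $\pi$ precisely when it is active in $\rho$. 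The site before $n$ is handled by the tie-breaking rule of the Burge transpose: as $n$ is the largest element, it is placed at the front of the block of entries sharing its value $a$; hence either $n$ is the first entry of $\pi$ (when $a=1$) or its left neighbour has value $a-1<a$, and in both cases the gap before $n$ is a run break, therefore active.

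For the site after $n$ I would compute $s$ and $t$ and match them with the last letters. Since $n$ sits at the front of the value-$a$ run $R_a$ of $\sort(\alh)$, the gap before $n$ is the $a$th active site, and the active sites weakly to the left of $n$ are exactly those preceding $R_1,\dots,R_a$, so $s=a$. Identically, $n-1$ sits at the front of the value-$b$ run of $\sort(\beh)$ — here $b$ is the last entry of $\beh$, equal to the last letter of $\be$ by Lemma~\ref{final} — giving $t=b$. It remains to see that the gap after $n$ is active exactly in the $d$-ascent case $a>b-d$. If $a>b-d$ then $\alh=\beh^{+}a$ and the value $a$ occurs in $\alh$ only at position $n$, so $R_a=\{n\}$ and the gap after $n$ is a genuine run break, hence active. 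If $a\le b-d$ then, since $a\le b\le\max\beh=\dasc\be$ (by Proposition~\ref{dhat_is_cayley}) and $\beh$ is a Cayley permutation, the value $a$ already occurs in $\beh$; thus an entry of value $a$ follows $n$ and the gap after $n$ lies inside the run $R_a$, hence is inactive. Therefore the gap after $n$ is active if and only if $a>b-d$, i.e. $s>t-d$.

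The only delicate point is this last case analysis for the gap after $n$: it rests on the dichotomy~\eqref{notation1} together with the fact that, in the non-$d$-ascent case, the value-$a$ block of $\beh$ is nonempty, which is precisely what forces the gap after $n$ to fall inside a run rather than at a run boundary. The remaining assertions are routine transfers through the value criterion.
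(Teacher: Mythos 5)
Your proposal is correct and follows essentially the same route as the paper: both arguments rest on Lemma~\ref{sites_sortalh} (activity of a site means sitting at a run boundary of the sorted top row), the tie-breaking rule placing the column $\binom{a}{n}$ at the front of the run of $a$'s, the identifications $s=a$ and $t=b$, and the dichotomy $\alh=\beh a$ versus $\alh=\beh^{+}a$ to decide whether that run is a singleton. Your explicit ``value criterion'' $a'_x<a'_y$ and the monotone-relabelling argument for common sites are just a cleaner spelling-out of the paper's statement that the runs of $\sort(\beh)$ and $\sort(\beh^{+})$ coincide up to shifting values, so no substantive difference.
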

\begin{proof}
Let $\al=(\burget\circ\,\dhat)^{-1}(\pi)=\be a$ and let $\be$ have
last element $b$. From the active sites of $\rho$ we can determine
$\sort(\beh)$. More precisely, from Lemma~\ref{sites_sortalh} one can
construct $\sort(\beh)$ by filling in the elements between the $i$th
and $(i+1)$st active sites with $i$'s for each $i\ge1$. Moreover, by
Lemma~\ref{act_sites_pos} the number of active sites
before~$n-1$ is the last letter of~$\be$.

Now consider what happens when the column $\binom{a}{n}$ is added to 
$\binom{\id}{\beh}^T$. Again we see from the proof of
Lemma~\ref{sites_sortalh}, that wherever this column is inserted, it
becomes the beginning of a run of $a$'s. Now using Item~$(b)$ of the
lemma, we see that all the common sites retain their character and that
the site to the left of~$n$ must be active.

Finally, look at the site to the right of~$n$. From the definition of~$s$
and~$t$ as well as the observation at the end of the first paragraph of
this proof, we have $s=a$ and $t=b$. Furthermore, since we only count active
sites before~$n$, we can determine~$s$ just from knowing the sites of~$\rho$
and the position of~$n$ in~$\pi$. So if $s\le t-d$ then $a\le b-d$ and~$a$
does not create a $d$-ascent. It follows that $\binom{a}{n}$ is placed at
the beginning of run of other~$a$'s. So, the site to its right will not
be active since it does not begin a run. On the other hand, if $s>t-d$
then a similar argument shows that the column is inserted as a run of~$a$'s
having only one element. This forces the site to its right to be active
and finishes the proof.
\end{proof}

To prove that $\img_d=\F_d$, we relate active sites with respect to
$\img_d$ with active sites with respect to $\F_d$. To avoid confusion,
we will call a site \emph{$\F_d$-active} if it is active with respect
to $\F_d$, and \emph{$\img_d$-active} if it is active with respect
to $\img_d$.
We will also need the following lemma by Zang and Zhou.

\begin{lemma}\cite[Lemma 2.5]{ZZ:dperm}\label{lemmaZZ}
Let $d\ge 0$ and $n\ge 1$. Let $\pi\in\Sym_n$ and let $\rho$ be obtained
by removing~$n$ from $\pi$. Then $\pi\in\F_{d,n}$ if and only if
$\rho\in\F_{d,n-1}$ and~$n$ is placed before~$\rho$ or after some
$d$-active element of~$\rho$.
\hfill{\qed}
\end{lemma}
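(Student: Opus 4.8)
The plan is to reduce the statement to two structural facts about deleting the maximum entry~$n$, one for each ingredient of the definition of $\F_d$. The first and key fact is an invariance of $d$-activeness: for every $k\le n-1$, the entry~$k$ has the same $d$-active status in~$\pi$ as in~$\rho$. I would prove this by induction on~$k$, the point being that the status of~$k$ is determined entirely by the subword $\pi^{(k)}$ of entries lying in $[k]$ together with the statuses of the smaller entries. Indeed, both whether~$k$ lies to the left of~$k-1$ and how many $d$-active entries sit between them can be read off from $\pi^{(k)}$; and since $n>k$, deleting~$n$ leaves this subword unchanged, i.e.\ $\pi^{(k)}=\rho^{(k)}$, while the inductive hypothesis supplies the statuses of the intervening smaller entries. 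Hence $\Act_d\pi\cap[n-1]=\Act_d\rho$.

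The second fact concerns ascent bottoms. Since~$n$ is the global maximum it is never an ascent bottom, so $\Ascbot\pi\subseteq[n-1]$. Tracking the insertion of~$n$: if~$n$ is placed immediately after an entry~$x$ of~$\rho$, then~$x$ acquires the successor $n>x$ and becomes an ascent bottom of~$\pi$; the successor of every other entry is unchanged, and no existing ascent bottom is destroyed, because an entry whose $\rho$-successor exceeded it still has a larger successor in~$\pi$. Therefore $\Ascbot\pi=\Ascbot\rho\cup\{x\}$, where~$x$ is the entry immediately to the left of~$n$, and $\Ascbot\pi=\Ascbot\rho$ when~$n$ is placed before all of~$\rho$ (so that no such~$x$ exists).

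Combining the two facts completes the proof. Because $\Ascbot\pi\subseteq[n-1]$ and $\Act_d\pi$ agrees with $\Act_d\rho$ on $[n-1]$, the membership $\Ascbot\pi\subseteq\Act_d\pi$ is equivalent to $\Ascbot\pi\subseteq\Act_d\rho$. Substituting $\Ascbot\pi=\Ascbot\rho\cup\{x\}$ splits this into $\Ascbot\rho\subseteq\Act_d\rho$, which is exactly $\rho\in\F_{d,n-1}$, together with $x\in\Act_d\rho$, which says that the entry just left of~$n$ is $d$-active in~$\rho$---equivalently, that~$n$ was inserted just after a $d$-active element. In the remaining placement, $n$ before all of~$\rho$, there is no~$x$ and the equivalence reduces to $\pi\in\F_d\iff\rho\in\F_d$. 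These are precisely the two admissible placements in the statement.

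I expect the main obstacle to be making the first fact fully rigorous. The definition of $d$-activeness is recursive and invokes the ``$d$-active elements of $\pi^{(k)}$ between~$k$ and~$k-1$,'' so one must verify that the activeness used there is intrinsic to $\pi^{(k)}$ and does not covertly depend on larger entries; once it is clear that the status of~$k$ is a function of $\pi^{(k)}$ alone, invariance under deleting~$n$ is immediate. The ascent-bottom bookkeeping is routine, but the claim that no ascent bottom is destroyed deserves an explicit check, since it is exactly what secures $\Ascbot\rho\subseteq\Ascbot\pi$.
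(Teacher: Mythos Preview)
The paper does not give its own proof of this lemma; it is quoted from Zang and Zhou and closed with a bare~$\qed$. Your proposal is a correct self-contained argument. The two structural facts you isolate are precisely what is needed: the recursive definition of $d$-activeness refers only to $\pi^{(k)}$ at stage~$k$, and since $\pi^{(k)}=\rho^{(k)}$ for every $k\le n-1$, induction on~$k$ gives $\Act_d\pi\cap[n-1]=\Act_d\rho$ immediately; the ascent-bottom bookkeeping $\Ascbot\pi=\Ascbot\rho\cup\{x\}$ (with no~$x$ when~$n$ is placed first) is routine, and your observation that no ascent bottom of~$\rho$ can be destroyed is correct because the only entry whose right neighbour changes is~$x$, and its new neighbour~$n$ exceeds every possible old one. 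The worry in your final paragraph is unnecessary: the paper's definition of $d$-activeness explicitly invokes $\pi^{(k)}$ at step~$k$, so dependence on the subword of entries in~$[k]$ alone is built into the definition and nothing covert can enter.
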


By Lemma~\ref{lemmaZZ}, the $\F_d$-active sites of $\rho\in\F_d$
are precisely those positions that follow a $d$-active element of~$\rho$,
together with the position before the leftmost entry.

\begin{theorem}\label{act_iff}
For any $d,n\ge 0$,
$$
F_{d,n}=\img_{d,n}.
$$
Furthermore, a site of $\pi\in\F_{d,n}=\img_{d,n}$ is $\F_d$-active
if and only if it is $\img_d$-active.
\end{theorem}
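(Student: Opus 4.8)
The plan is to prove both assertions simultaneously by induction on $n$, the case $n\le 1$ being immediate since each family then consists of the single permutation of that size and both of its sites are active under either convention. So suppose the theorem holds in size $n-1$; in particular $\F_{d,n-1}=\img_{d,n-1}$, and for every $\rho$ in this common set the $\F_d$-active sites coincide with the $\img_d$-active sites.

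First I would dispatch the set equality. Both families are tree-like, $\img_d$ by Lemma~\ref{imgd_treelike} and $\F_d$ by Lemma~\ref{lemmaZZ}, so $\F_{d,n}$ (resp.\ $\img_{d,n}$) is obtained by inserting $n$ into each $\F_d$-active (resp.\ $\img_d$-active) site of each parent in $\F_{d,n-1}$ (resp.\ $\img_{d,n-1}$); distinct (parent, site) pairs give distinct permutations because the parent is recovered by deleting $n$. By the inductive hypothesis the parents and their active sites agree, so the two families of children coincide and $\F_{d,n}=\img_{d,n}$. Thus the entire theorem reduces to proving the site agreement in size $n$, and for this I may freely use $\F_{d,n}=\img_{d,n}$.

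Fix $\pi$ in this common set and let $\rho$ be its parent. The key preliminary observation is that for every $k\le n-1$ the $d$-activity of $k$ is the same in $\pi$ and in $\rho$, since it is determined by $\pi^{(k)}=\rho^{(k)}$ together with the $d$-activities of smaller elements. Consequently every common site (one lying between the same pair of entries in $\pi$ and $\rho$) retains its $\F_d$-active status, which by Lemma~\ref{actsites_imgd} is exactly its $\img_d$-behaviour; together with the inductive hypothesis for $\rho$ this shows the common sites agree. The site just before $n$ is active in both senses: it is the insertion site, which was active in $\rho$ and hence either the position before the leftmost entry or a position after a $d$-active element, so it is $\F_d$-active, and it is declared $\img_d$-active in Lemma~\ref{actsites_imgd}.

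The crux is the site just after $n$, which is $\F_d$-active exactly when $n\in\Act_d\pi$ and $\img_d$-active exactly when $s>t-d$, where $s$ and $t$ count the active sites before $n$ in $\pi$ and before $n-1$ in $\rho$. I would first note that $n$ is inserted into the $s$-th active site of $\rho$ (by Lemma~\ref{act_sites_pos}, and because the active sites before $n$ in $\pi$ are the $s-1$ active sites of $\rho$ left of the insertion point together with the now-before-$n$ site), while the first $t$ active sites of $\rho$ are precisely those preceding $n-1$. Hence $n$ lies to the left of $n-1$ if and only if $s\le t$. If $s>t$, then $n$ is to the right of $n-1$, so $n$ is $d$-active by definition and $s>t\ge t-d$, and both criteria hold. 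If $s\le t$, I claim the number of $d$-active elements strictly between $n$ and $n-1$ equals $t-s$: after $n$ is inserted at the $s$-th active site, the active sites of $\rho$ numbered $s+1,\dots,t$ are exactly the active sites lying strictly between $n$ and $n-1$, each is the position immediately following a distinct $d$-active element in that gap (the new site just after $n$ not being among them), and conversely every $d$-active element in the gap is followed by one of these sites. Granting the count $t-s$, the element $n$ is $d$-active if and only if $t-s<d$, i.e.\ $s>t-d$, matching the $\img_d$-criterion. I expect this last bookkeeping to be the main obstacle, since it is precisely where the $d$-active-element description of $\F_d$-activity from Lemma~\ref{lemmaZZ} must be reconciled with the run-based description of $\img_d$-activity from Lemma~\ref{sites_sortalh} and Lemma~\ref{actsites_imgd}. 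With the three kinds of sites handled, the site agreement in size $n$ is complete, closing the induction.
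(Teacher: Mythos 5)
Your proposal is correct and follows essentially the same route as the paper's proof: induction on $n$, using tree-likeness of both families (Lemmas~\ref{imgd_treelike} and~\ref{lemmaZZ}) to get $\F_{d,n}=\img_{d,n}$, then checking common sites, the site before $n$, and finally the site after $n$ by comparing the counts $s$ and $t$ of active sites before $n$ and $n-1$. Your counting of $d$-active elements in the gap between $n$ and $n-1$ via the active sites of $\rho$ numbered $s+1,\dots,t$ is exactly the paper's computation of $t-s$, just phrased as an explicit bijection rather than a chain of equalities.
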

\begin{proof}
We use induction on~$n$, where the claim holds for $n\le 1$.
Let $n\ge 2$ and assume that $\F_{d,n-1}=\img_{d,n-1}$.
By induction, given $\rho\in\F_{d,n-1}=\img_{d,n-1}$, a site of~$\rho$
is $\F_d$-active if and only if it is $\img_d$-active.
Since both $\img_d$ and $\F_d$ are tree-like, by Lemmas~\ref{imgd_treelike}
and~\ref{lemmaZZ}, respectively, the equality $\F_{d,n}=\img_{d,n}$
follows immediately.

Let us now consider a permutation $\pi\in\F_{d,n}=\img_{d,n}$.
We have to show that a site of~$\pi$ is $\F_d$-active if and only if
it is $\img_d$-active.
The site before the leftmost entry is active in both cases by item~(b)
of Lemma~\ref{sites_sortalh} and by Lemma~\ref{lemmaZZ}.
Now, let $\rho\in\F_{d,n-1}=\img_{d,n-1}$ be the parent of~$\pi$.
By Lemma~\ref{actsites_imgd} each common site is $\img_d$-active
in~$\pi$ if and only if it is $\img_d$-active in~$\rho$; and
the new site before~$n$ is $\img_d$-active.
Similarly, by definition of $d$-active entry and Lemma~\ref{lemmaZZ},
each of these sites is $\F_d$-active in~$\pi$ if and only if it is
$\F_d$-active in~$\rho$, and $n$ is always placed in an $\F_d$-active site
which is directly after an $\F_d$-active element.
Since by induction $\F_d$-active and $\img_d$-active sites of~$\rho$
coincide, the desired claim holds for every common site, as well as for
the new site before~$n$.

To finish the proof of the theorem, we only need to consider
the new site after~$n$. Using the same notation as in
Lemma~\ref{actsites_imgd}, let~$s$ and~$t$ be the number of
$\img_d$-active sites before~$n$ in~$\pi$ and before~$n-1$ in~$\rho$,
respectively. By this lemma, the site after~$n$ is $\img_d$-active
if and only if $s>t-d$. If~$n$ appears to the right of $n-1$ in~$\pi$,
then~$n$ is $\F_d$-active. Moreover, we have $s\ge t+1$ since the site
before~$n$ is $\img_d$-active. Thus
$$
s\ge t+1>t\ge t-d
$$
and the site after~$n$ is $\img_d$-active, as desired.
On the other hand, suppose that~$n$ appears to the left of $n-1$.
Write
$$
\pi=g_1\ldots g_i\;n\;g_{i+1}\ldots g_j\;(n-1)\;g_{j+1}\ldots g_{n-1},
$$
for some $i\le j$. 
We have
\begin{align*}
t &=\text{$\#$ $\img_d$-active sites before $(n-1)$ in $\rho$}\\
  &=\text{$\#$ $\F_d$-active sites before $(n-1)$ in $\rho$}\\
\shortintertext{by induction, and}
s &=\text{$\#$ $\img_d$-active sites before $n$ in $\pi$}\\
&=\text{$\#$ $\img_d$-active sites before $g_{i+1}$ in $\rho$}\\
&=\text{$\#$ $\F_d$-active sites before $g_{i+1}$ in $\rho$}\\
\end{align*}
where the last step is again by induction.  Therefore,
\begin{align*}
t-s
&=\text{$\#$ $\F_d$-active sites between $g_{i+1}$ and $n-1$ in $\rho$}\\
&=\text{$\#$ $\F_d$-active entries in $g_{i+1}\ldots g_j$ in $\rho$},\\
\end{align*}
where at the last step we used Lemma~\ref{lemmaZZ}.
Finally, by Lemma~\ref{actsites_imgd}, the site after $n$ is $\img_d$-active if and only if
$s> t-d$.  Rearranging terms gives $t-s < d$ which is equivalent to $n$ being a $d$-active element by the definition of $d$-active entries. In turn, this is equivalent to the site after $n$ being $\F_d$-active by Lemma~\ref{lemmaZZ}. This completes the proof.
\end{proof}

\begin{theorem}\label{dfish_thm}
For any $d\ge 0$,
$$
\Phi_d=\burget\circ\,\dhat.
$$
\end{theorem}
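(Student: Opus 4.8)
The plan is to prove the pointwise identity $\Phi_d(\al)=\burget(\dhat(\al))$ for every $\al\in\A_d$ by induction on the size $n=|\al|$, exploiting the fact that both $\Phi_d$ and $\burget\circ\,\dhat$ are built recursively by inserting $n$ into an active site of the parent permutation. The base cases $n\le1$ are immediate, since $\Phi_d(\epsilon)=\epsilon=\burget(\dhat(\epsilon))$ and $\Phi_d(1)=1=\burget(\dhat(1))$.

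For the inductive step I would write $\al=\be a$ with $\be\in\kAscseq{d,n-1}$ and apply the inductive hypothesis to obtain $\Phi_d(\be)=\burget(\beh)$; denote this common parent by $\rho$. On the one hand, by the recursive definition of $\Phi_d$ due to Zang and Zhou, $\Phi_d(\al)$ is the result of inserting $n$ into the $\F_d$-active site of $\rho$ labeled $a$. On the other hand, Lemma~\ref{act_sites_pos} shows that $\burget(\alh)$ is obtained by inserting $n$ into the $\img_d$-active site of its parent $\rho$ labeled $a$. Hence both permutations arise by placing $n$ in ``the $a$th active site'' of the same permutation $\rho$, and the only conceivable discrepancy is which notion of active site, $\F_d$ or $\img_d$, governs the labeling.

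The crux is to eliminate this discrepancy, and this is precisely the content of Theorem~\ref{act_iff}, which identifies $\F_{d,n-1}$ with $\img_{d,n-1}$ and proves that a site of $\rho$ is $\F_d$-active if and only if it is $\img_d$-active. Since the two sets of active sites of $\rho$ coincide as sets of positions, and both schemes label active sites $1,2,\dots$ from left to right, the $a$th $\F_d$-active site and the $a$th $\img_d$-active site are literally the same site. Inserting $n$ there produces one and the same permutation, so $\Phi_d(\al)=\burget(\alh)=\burget(\dhat(\al))$, closing the induction.

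I do not anticipate a genuine obstacle at this stage, since the substantive work has already been front-loaded into Theorem~\ref{act_iff} and Lemma~\ref{act_sites_pos}; what remains is essentially a bookkeeping induction. The one point that warrants care is the matching of the two left-to-right labelings of active sites coming from the distinct tree structures on $\F_d$ and $\img_d$, but this matching is automatic once the underlying sets of active positions are known to agree.
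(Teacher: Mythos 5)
Your proposal is correct and follows essentially the same route as the paper's own proof: induction on length, with the inductive step resolved by combining the recursive definition of $\Phi_d$, Lemma~\ref{act_sites_pos} for the position of $n$ under $\burget\circ\,\dhat$, and Theorem~\ref{act_iff} to identify the $\F_d$-active and $\img_d$-active sites of the common parent $\rho$. Your explicit remark that the two left-to-right labelings agree once the underlying sets of active sites coincide is exactly the (implicit) bookkeeping point in the paper's argument.
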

\begin{proof}
We have established in Theorem~\ref{act_iff} that the maps~$\Phi_d$
and~$\burget\circ\,\dhat$ have the same image $\F_d=\img_d$.
Let us prove inductively that $\Phi_d=\burget\circ\,\dhat$.
Let~$\al=\be a\in\kAscseq{d,n}$, where~$\be\in\kAscseq{d,n-1}$ and
$1\le a\le 1+\asc\be$. By induction, we have
$$
\Phi_d(\be)=\burget(\dhat(\be))=:\rho.
$$
Again by Theorem~\ref{act_iff}, a site of~$\rho$ is $\F_d$-active if
and only if it is $\img_d$-active. Moreover, the last letter of~$\al$
determines the label of the active site where~$n$ is inserted both
under~$\Phi_d$, by definition, and under~$\burget\circ\,\dhat$,
by Lemma~\ref{act_sites_pos}. Thus $\Phi_d(\al)=\burget(\dhat(\al))$,
finishing the proof.
\end{proof}

\section{Pattern avoidance in \texorpdfstring{$\F_d$}{Fd}}\label{sec_pattav}

The introduction and characterization of the $d$-Fishburn permutations
opens the door to pattern avoidance results parameterized by $d$. As an
illustration, we shall study one such instance in some depth, namely
the case of $d$-Fishburn permutations avoiding the classical pattern $213$.
First, recall the bivincular pattern
\[
  \si_d=(d+2)|(d+3)12\dots d \overline{(d+1)}.
\]

Zang and Zhou~\cite[Theorem 2.4]{ZZ:dperm} proved that
\begin{equation}
\label{FSs}
\F_d\subseteq\Sym(\si_d)
\end{equation}
for every $d\ge 0$, where for $d=0,1$
equality holds.

\begin{proposition}
We have $\F_d(213)=\Sym(\si_d,213)$.
\end{proposition}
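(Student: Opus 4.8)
The plan is to prove the two inclusions separately, using that one of them is essentially free. Since equation~\eqref{FSs} gives $\F_d\subseteq\Sym(\si_d)$, intersecting both sides with $\Sym(213)$ yields
$\F_d(213)=\F_d\cap\Sym(213)\subseteq\Sym(\si_d)\cap\Sym(213)=\Sym(\si_d,213)$
immediately. So the whole content lies in the reverse inclusion $\Sym(\si_d,213)\subseteq\F_d(213)$.

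For the reverse inclusion I would argue by contraposition, showing that a $213$-avoider lying outside $\F_d$ must contain $\si_d$. Recall from the previous proposition that $\F_d=\Sym(\fishpattern_d)$, so if $\pi$ avoids $213$ but $\pi\notin\F_d$, then $\pi$ contains $\fishpattern_d$: there is an occurrence $p_ip_{i+1}p_j$ of $\fishpattern$ with $p_i<p_{i+1}$ in adjacent positions, with $p_j=p_i-1$ at some position $j>i+1$, and with $p_i$ being $d$-inactive. By the definition of a $d$-inactive element there are at least $d$ elements of $\pi^{(p_i)}$ that are $d$-active and lie strictly between $p_i$ and $p_j$ in position. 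Since every such element has value at most $p_i$ but is neither $p_i$ (the left endpoint) nor $p_i-1=p_j$ (at position $j$), all of them have value $\le p_i-2$; moreover none occupies position $i+1$ (which holds $p_{i+1}$), so they all lie in positions strictly between $i+1$ and $j$.

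The key step is to exploit $213$-avoidance to turn these $d$ witnesses into the block $12\dots d$ of $\si_d$. I claim they increase from left to right. Indeed, if two of them, say $x$ before $y$, satisfied $x>y$, then $x$, $y$, $p_j$ would occupy increasing positions with values $y<x<p_i-1=p_j$, that is, an occurrence of $213$, contradicting avoidance. Hence the $\ge d$ witnesses form an increasing subsequence; pick any $d$ of them. Together with $p_i$ (rank $d+2$), $p_{i+1}$ (rank $d+3$) and $p_j$ (rank $d+1$) they realize the word $(d+2)(d+3)12\dots d(d+1)$, with the $d$ witnesses as the smallest $d$ ranks in increasing order, positioned after $p_{i+1}$ and before $p_j$. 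The vincular decoration holds because $p_i,p_{i+1}$ are adjacent, and the covincular decoration $\overline{(d+1)}$ holds because $p_i=p_j+1$. Thus $\pi$ contains $\si_d$, which completes the contrapositive and hence the proof.

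I expect the main obstacle to be the bookkeeping of the bivincular constraints of $\si_d$: one must verify simultaneously that the chosen $d$ active elements (i) are smaller than $p_j$, so they rank below $d+1$, (ii) are positioned after $p_{i+1}$ and before $p_j$, and (iii) are mutually increasing, and that the two decorations of $\si_d$ are inherited from the $\fishpattern$ occurrence. The clean point making all of this work is that $p_j=p_i-1$ is larger than every between-element yet lies to their right, which is exactly the configuration that, combined with any inversion among those elements, is forbidden by $213$.
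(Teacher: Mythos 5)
Your proof is correct and follows essentially the same route as the paper: one inclusion comes directly from~\eqref{FSs}, and the reverse is obtained by showing that a $213$-avoider containing an occurrence $p_ip_{i+1}p_j$ of $\fishpattern_d$ must contain $\si_d$, the key point being that $213$-avoidance (with $p_j$ as the large element) forces the $d$-active witnesses between $p_{i+1}$ and $p_j$ to appear in increasing order. The only cosmetic difference is that the paper dispatches $d\le 1$ via the known equality in~\eqref{FSs} and runs the pattern argument only for $d\ge 2$, whereas you treat all $d$ uniformly, which is fine since the increasing-order condition is vacuous when there are fewer than two witnesses.
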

\begin{proof}
The inclusion $\F_d(213)\subseteq\Sym(\si_d,213)$ follows 
from~\eqref{FSs}. For the same reason, if $d\le 1$ we obtain the
desired equality.
Now let $d\ge 2$. We shall prove the remaining inclusion
$\Sym(\si_d,213)\subseteq\F_d(213)$. Let $\pi\in\Sym_d(\si,213)$.
For a contradiction, suppose that $\pi\notin\F_d$. That is,
$\pi$ contains an occurrence $p_ip_{i+1}p_j$ of~$\fishpattern$
where $p_i$ is not a $d$-active element.
Since~$p_i$ is not $d$-active, there are at least~$d$ entries
$p_{u_1},\dots,p_{u_d}$, $u_1<u_2<\cdots<u_d$, between~$p_{i+1}$
and~$p_j$ that are smaller than~$p_j$ (and $d$-active).
Further, since $d\ge 2$, these must be in increasing order or else
they would create an occurrence of~$213$ with~$p_j$.
Thus we have obtained an occurrence $p_ip_{i+1}p_{u_1}\ldots p_{u_d}p_j$
of~$\si_{d}$, which is impossible.
\end{proof}

In order to enumerate $\F_d(213)$, we show that~$\Sym_{n}(\si_d,213)$
is in bijection with the set of Dyck paths of semilength~$n$ that do
not contain~$\D\D\U^{d+1}$ as a factor.
Let us start by defining a bijection~$\phi$ from~$\Sym_n(213)$ to Dyck
paths of semilength~$n$. It is simply a tilted version of what is
sometimes called~\cite{ClKi2008} the \emph{standard bijection} from
$132$-avoiding permutations to Dyck paths. Any non-empty
permutation~$\pi\in\Sym(213)$ decomposes uniquely as
$$
\pi=p_1 L R,
$$
where all the entries in~$L$ are larger than~$p_1$, and all the entries
in~$R$ are smaller than~$p_1$. Then~$\phi$ is defined recursively
by mapping the empty permutation to the empty path and letting
$$
\phi(\pi)=\phi(p_1 L R)=\U\phi(L)\D\phi(R),
$$
where here we abuse notation and use the same letter~$L$ for the
permutation that is order isomorphic to~$L$. Under the bijection~$\phi$,
the value of the first letter determines the first return to the
$x$-axis.

We show that~$\phi$ restricts to a bijection from~$\Sym(\si_d,213)$ to
Dyck paths avoiding $\D\D\U^{d+1}$ as a factor, for every $d\ge 0$.
First a lemma whose easy proof is omitted.

\begin{lemma}\label{incr_prefix}
Let $\pi\in\Sym_n(213)$ and let $\rho=\phi(\pi)$
be the corresponding Dyck path. Then

\eqqed{
p_1<p_2<\cdots<p_k
\;\iff\;
\U^k\text{ is a prefix of $\rho$.}
}
\end{lemma}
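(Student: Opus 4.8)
The plan is to prove the equivalent quantitative statement that the length of the longest increasing prefix of $\pi$ equals the height of the first peak of $\rho=\phi(\pi)$; the stated biconditional follows at once, since $\U^k$ is a prefix of $\rho$ exactly when $k$ is at most this common value, and likewise $p_1<p_2<\cdots<p_k$ holds exactly when $k$ is at most the length of the longest increasing prefix. Concretely, write $m(\pi)$ for the largest $k$ with $p_1<p_2<\cdots<p_k$ and $h(\rho)$ for the largest $k$ with $\U^k$ a prefix of $\rho$. I would prove $m(\pi)=h(\rho)$ by induction on $n=|\pi|$, the empty permutation being immediate as then $m(\pi)=0=h(\rho)$.

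For the inductive step I would invoke the canonical decomposition $\pi=p_1LR$ on which $\phi$ is built, where every entry of $L$ exceeds $p_1$ and every entry of $R$ is smaller than $p_1$ (this is precisely the structure forced by $213$-avoidance and is what makes the recursion well defined). On the path side, the defining rule $\phi(\pi)=\U\,\phi(L)\,\D\,\phi(R)$ shows that the initial run of up-steps of $\rho$ is the leading $\U$ followed by the initial up-run of $\phi(L)$, after which a $\D$ necessarily occurs; hence $h(\rho)=1+h(\phi(L))$, whether or not $L$ is empty.

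On the permutation side I would show $m(\pi)=1+m(L)$, where $m(L)$ is computed for the standardization of $L$ (which has the same relative order as $L$, and the same image under $\phi$). If $L$ is empty this reads $m(\pi)=1$, which is correct because any subsequent entry lies in $R$ and is thus smaller than $p_1$. If $L$ is nonempty then $p_1<p_2$, and the increasing prefix of $\pi$ continues exactly as long as that of $L$ does. The one place where the decomposition is genuinely used—and the only real bookkeeping obstacle—is checking that this prefix cannot spill past the end of $L$ into $R$: the first entry of $R$ is smaller than $p_1$, which is smaller than the last entry of the increasing prefix of $L$, so a descent is forced at the $L$/$R$ boundary. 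Combining the two identities with the inductive hypothesis $m(L)=h(\phi(L))$ yields $m(\pi)=1+m(L)=1+h(\phi(L))=h(\rho)$, closing the induction and giving the lemma for every $k$.
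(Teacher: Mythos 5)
Your proof is correct. The paper in fact omits the proof of this lemma entirely (it is dismissed as ``easy''), and your induction on the first-return decomposition $\pi=p_1LR$ together with $\phi(\pi)=\U\,\phi(L)\,\D\,\phi(R)$ — including the boundary check that the increasing prefix cannot continue from $L$ into $R$ — is exactly the argument implicitly intended, using the same decomposition the paper relies on in the proof of Lemma~\ref{sigma_iff_factor}.
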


\begin{lemma}\label{sigma_iff_factor}
Let $\pi\in\Sym_n(213)$ and let $\rho=\phi(\pi)$
be the corresponding Dyck path. Then, for any $d\ge 0$,
$$
\pi\text{ contains }\si_{d}
\;\iff\;
\D\D\U^{d+1}\text{ is a factor of $\rho$.}
$$
\end{lemma}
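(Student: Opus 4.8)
The plan is to induct on $n=|\pi|$, exploiting the recursive structure of $\phi$. Write $\pi=p_1LR$ as in the definition of $\phi$, so that $\rho=\phi(\pi)=\U\,\phi(L)\,\D\,\phi(R)$, and recall that $\pi\in\Sym(213)$ forces every entry of $L$ to exceed $p_1$, every entry of $R$ to be below $p_1$, with the block $L$ preceding $R$. First I would record how $\D\D\U^{d+1}$ can sit inside $\rho$. Since the $\U$-steps of the factor form a contiguous block and the central $\D$ separates $\phi(L)$ from $\phi(R)$, this block cannot cross the central $\D$; hence it lies inside $\phi(L)$, inside $\phi(R)$ away from its start, or is an initial segment of $\phi(R)$. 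In the first two cases the preceding $\D\D$ lies in the same sub-path, so the factor occurs in $\phi(L)$ or in $\phi(R)$. In the last case the two down-steps must be the final $\D$ of $\phi(L)$ together with the central $\D$, which is possible exactly when $L\neq\emptyset$ and $\phi(R)$ opens with at least $d+1$ up-steps; by Lemma~\ref{incr_prefix} applied to $R$, the latter says that the first $d+1$ entries of $R$ are increasing.

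Next I would prove the matching trichotomy on the permutation side: every occurrence of $\si_d$ in $\pi$ lies within $L$, lies within $R$, or has its two largest entries equal to $p_1,p_2$. This is where $213$-avoidance does the work. Let $x_1<x_2$ be the (adjacent) entries playing the roles of $d+2$ and $d+3$, and let $z$ play the role of $d+1$, so that $z$ is the last entry of the occurrence and the bivincular condition gives $z=x_1-1$. A case analysis on where the adjacent pair $x_1x_2$ sits yields: if both lie in $L$ then $z=x_1-1\ge p_1$, and $z=p_1$ is impossible since $z$ is last while $p_1$ heads $\pi$, so $z>p_1$, hence $z\in L$; as $z$ is the final entry the whole occurrence is trapped in $L$. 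If both lie in $R$ the occurrence lies in $R$. The split $x_1\in L$, $x_2\in R$ cannot occur, since it would give $x_2<p_1<x_1$, contradicting $x_1<x_2$. The only remaining possibility is $x_1=p_1$, $x_2=p_2$, which requires $p_1<p_2$, i.e.\ $L\neq\emptyset$. One must also check the bivincular equality is inherited under standardization of $L$ and of $R$: any value strictly between $z$ and $x_1$ lies in the same block as $z$ and $x_1$, so $z=x_1-1$ in $\pi$ is equivalent to the corresponding consecutivity after restriction.

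It then remains to match the boundary clauses. A boundary occurrence ($x_1=p_1$) forces $z=p_1-1=\max R$ and requires $d$ further entries, increasing and below $z$, lying to the left of $z$ inside $R$; equivalently, $R$ contains an increasing subsequence of length $d+1$ ending at $\max R$. A one-line induction on the $\phi$-decomposition of $R$ (again using $R\in\Sym(213)$) shows that for a $213$-avoiding word the longest increasing subsequence ending at its maximum has the same length as its maximal increasing prefix; so this is equivalent to the first $d+1$ entries of $R$ being increasing, exactly the path-side boundary clause. Assembling the three clauses, $\pi$ contains $\si_d$ iff $L$ does, or $R$ does, or the boundary condition holds, and the identical trichotomy governs $\D\D\U^{d+1}$ in $\rho$; since $\phi(L),\phi(R)$ are the paths of the standardizations of $L,R$, the inductive hypothesis closes the argument, the base case $n=0$ being trivial.

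I expect the main obstacle to be the confinement step of the second paragraph: verifying from $213$-avoidance alone, together with the vincular adjacency of $x_1x_2$ and the bivincular equality $z=x_1-1$, that no occurrence of $\si_d$ can genuinely straddle the $L/R$ split except through $p_1$. The delicate point is that the value $z=x_1-1$ pins the occurrence to one side (or to the head $p_1$), and one has to argue this cleanly rather than by an unwieldy enumeration of positions; the secondary technical point is the "increasing subsequence to the maximum equals increasing prefix" lemma, which is what aligns the permutation boundary condition with the path prefix condition supplied by Lemma~\ref{incr_prefix}.
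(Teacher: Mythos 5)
Your proposal is correct and takes essentially the same route as the paper's proof: induction on the head decomposition $\pi=p_1LR$ with $\rho=\U\phi(L)\D\phi(R)$, a three-way split of where an occurrence of $\si_d$ (respectively a $\D\D\U^{d+1}$ factor) can live, and Lemma~\ref{incr_prefix} to match the boundary clauses. The differences are only organizational---you pin the straddling case via the bivincular value $z=x_1-1$ and route the boundary equivalence through ``longest increasing subsequence ending at the maximum,'' while the paper argues directly that the first $d+1$ entries of $R$ are increasing---but both rest on the same use of $213$-avoidance, and your extra care with standardization of $L$ and $R$ only makes explicit what the paper glosses over.
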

\begin{proof}
We use induction on~$n$, where $n=0$ and $n=1$ are trivial. Assume our
claim holds for $n-1$ where $n\ge 2$ and let $\pi=p_1LR\in\Sym_n(213)$. Initially, suppose that~$\pi$ contains an occurrence
$p_ip_{i+1}p_{u_1}\ldots p_{u_d}p_j$ of~$\si_{d}$. If either
$p_j\in L$ or $p_i\in R$, then we can conclude that~$\rho$ contains
a factor $\D\D\U^{d+1}$ by induction.
Otherwise, since entries in~$L$ are larger than entries in~$R$, it must
necessarily be that $p_{i+1}\in L$ while $p_{u_1}$ is contained in~$R$.
Moreover, since $p_j=p_i-1$, we have $i=1$ and~$p_j$ is the largest
entry in~$R$.
Now, since~$L$ is not empty, the path~$\U\phi(L)\D$ ends with~$\D\D$.
Furthermore, since~$\pi$ avoids~$213$, all the entries preceding~$p_j$
in~$R$ are in increasing order. Taking~$p_j$ into account, (at least)
the first~$d+1$ entries of~$R$ are in increasing order. Using
Lemma~\ref{incr_prefix}, it follows that~$\phi(R)$ starts with~$\U^{d+1}$.
Hence the last two steps of~$\U\phi(L)\D$ form a factor~$\D\D\U^{d+1}$
with the first~$d+1$ steps of~$\phi(R)$, as wanted.

On the other hand, suppose that~$\rho$ contains a factor $\D\D\U^{d+1}$.
We will show that~$\pi$ contains~$\si_{d}$. Similarly to the argument in
the previous paragraph, if the whole factor $\D\D\U^{d+1}$ is
contained in either~$\phi(L)$ or~$\phi(R)$, then we can conclude the proof by
induction. Otherwise, it must be that the last two steps
of~$\U\phi(L)\D$ are~$\D\D$ and the first~$d+1$ steps of~$\phi(R)$
are~$\U^{d+1}$. Since~$\phi(L)$ is not empty, we have~$p_1<p_2$. 
Using Lemma~\ref{incr_prefix} once again, we have that the
first $d+1$ entries of~$R$, say $p_{u_1},\dots,p_{u_d},p_{u_{d+1}}$,
are in increasing order. Finally, the maximum entry of~$R$ is equal
to $p_1-1$, and we obtain the desired occurrence
$p_1p_2p_{u_1}\ldots p_{u_d}(p_i-1)$ of~$\si_{d}$ in~$\pi$.
\end{proof}

For any fixed $d\geq 0$, we shall derive a generating function for the
numbers $\#\F_{d,n}(213)$. By the preceding proposition we can achieve
this by counting Dyck paths having no $\D\D\U^{d+1}$ factor. In fact, we
shall derive a generating function for the distribution of the number of
$\D\D\U^{d+1}$ factors over Dyck paths. Let us start with the case
$d=0$. In the spirit of the cluster method~\cite{GJ:cluster,Wang:phd},
consider Dyck paths in which a subset of the $\D\D\U$ factors have been
marked. For instance,
\[
  \rho = \U\U\D\U\underline{\D\D\U}\U\U\D\D\U\underline{\D\D\U}\D
\]
has three $\D\D\U$ factors, two of which have been marked
(underlined). Let us encode $\rho$ as a word $\rho'$ over the alphabet
$\{\U,\D,\D'\}$ by replacing each marked $\D\D\U$ factor with a
$\D'$. In our example we have
\[
  \rho' = \U\U\D\U\D'\U\U\D\D\U\D'\D.
\]
Note that $\rho'$ represents a marked Dyck path if and only if $\rho'$
itself is a Dyck path, when interpreting $\D'$ as $\D$, and the height
at which any $\D'$ step starts is at least two.

Let $\Path_0\in\QQ\langle\U,\D,\D'\rangle$ be the formal sum of
Dyck paths with two sorts of down steps, $\D$ and $\D'$. By the
usual first return decomposition $\Path_0$ satisfies
\[
  \Path_0 = 1 + \U \Path_0 \D \Path_0 + \U \Path_0 \D' \Path_0.
\]
Let $\Q_0\in\QQ\langle\U,\D,\D'\rangle$ be the formal sum of the subset
of the paths encoded in $\Path_0$ defined by requiring that the height
at which any $\D'$ step starts is at least two. Then
\[
  \Q_0 = (\U\Path_0\D)^*,
\]
where we use the (Kleene star) convention
$\mathcal{F}^*=1+\mathcal{F}+\mathcal{F}^2+\cdots$.  Define the map
$\varphi:\QQ\langle \U,\D,\D'\rangle\to\QQ[q,x]$ by
$\U\mapsto x$, $\D\mapsto 1$, $\D'\mapsto qx$ and extending by
linearity. Now, letting $P_0(q,x)=\varphi(\Path_0)$ 
and
$Q_0(q,x)=\varphi(\Q_0)$, we get the functional equations:
\begin{align*}
  P_0(q,x) &= 1 + x P_0(q,x)^2 + qx^2 P_0(q,x)^2;\\
  Q_0(q,x) &= 1/(1-xP_0(q,x)).
\end{align*}
Note that
\[
  \sum_{\rho}\, (1+q)^{\D\D\U(\rho)}\, x^{|\rho|} \,=\,  Q_0(q, x),
\]
where the sum ranges over all Dyck paths, $|\rho|$ is the semilength of
$\rho$, and $\D\D\U(\rho)$ is short for the number of $\D\D\U$ factors
in $\rho$. Indeed, the power series $Q_0(q,x)$ counts Dyck paths with
respect to semilength and number of marked $\D\D\U$ factors, but so does
the left-hand side: For each of the $\D\D\U$ factors there is a choice to
be made, mark it (with a $q$) or leave it unmarked. Thus,
\begin{equation}\label{Q_0}
  Q_0(q-1, x) \,=\, \sum_{\rho}\, q^{\D\D\U(\rho)}\, x^{|\rho|}
\end{equation}
is the generating function we seek.
In particular, $Q_0(-1, x)$ is
the generating function for Dyck paths with no $\D\D\U$ factors.

A similar analysis applies when $d\geq 1$. In this case we consider Dyck
paths $\rho$ in which a subset of the $\D\D\U^{d+1}$ factors are marked,
and we encode such a path by a word $\rho'$ over the alphabet
$\{\U,\U',\D\}$, where $\U'$ represents a marked $\D\D\U^{d+1}$
factor. In this way, $\rho'$ represents a marked Dyck path if and only
if $\rho'$ itself is a Dyck path, when interpreting $\U'$ as $\U^{d-1}$,
and the height at which any $\U'$ step starts is at least two. As the
reader may have noticed, for the preceding description to make sense in
the special case $d=1$ we need to view $\U^0$ as a level-step and
in this case we are really dealing with Motzkin paths rather than Dyck
paths. However, the equations describing the
resulting language hold uniformly for any $d\geq 1$ and this is the
reason for not separating out $d=1$ as a special case.

Let $\Path_d\in\QQ\langle\U,\U',\D\rangle$  be the formal sum of Dyck
paths with two sorts of up steps, $\U$ and $\U'$, where each $\U'$ can
be thought of representing $\D\D\U^{d+1}$ and thus each such step
contributes $d-1$ to the height of the path. By a simple extension of
the first return decomposition we find that
\[
  \Path_d = 1 + \U \Path_d \D \Path_d + \U' \Path_d (\D \Path_d)^{d-1}.
\]
Let $\Q_d\in\QQ\langle\U,\U',\D\rangle$ be the formal sum of the subset
of the paths encoded in $\Path_d$ defined by requiring that the height
at which any $\U'$ step starts is at least two. Then
\[
  \Q_d = \bigl(\U(\U\Path_d\D)^{*}\D\bigr)^{*}.
\]
Define $\varphi:\QQ\langle\U,\U',\D\rangle \to \QQ[q,x]$ by
$\U\mapsto x$, $\D\mapsto 1$ and $\U'\mapsto qx^{d+1}$. Then, with
$P_d(q,x) = \varphi(\Path_d)$ and $Q_d(q,x) = \varphi(\Q_d)$, we have
\begin{align*}
  P_d(q,x) &= 1 + xP_d(q,x)^2 + qx^{d+1} P_d(q,x)^{d-1};\\
  Q_d(q,x) &= \cfrac{1}{1-\cfrac{x}{1-xP_d(q,x)}}.
\end{align*}
By following the same line of reasoning as were used to demonstrate
identity~\eqref{Q_0} we arrive the following result.

\begin{proposition}\label{factor_distrib}
  For any $d\geq 0$,
  \[
    \sum_{\rho}\, q^{\D\D\U^{d+1}(\rho)}\, x^{|\rho|} \,=\,  Q_d(q-1, x),
  \]
  where the sum ranges over all Dyck paths, $|\rho|$ is the semilength
  of $\rho$, and $\D\D\U^{d+1}(\rho)$ is short for the number of
  $\D\D\U^{d+1}$ factors in $\rho$.
  \hfill{\qed}
\end{proposition}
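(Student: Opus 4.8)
The plan is to derive the identity $Q_d(q,x)=\sum_{\rho}(1+q)^{\D\D\U^{d+1}(\rho)}x^{|\rho|}$ and then substitute $q\mapsto q-1$, mirroring verbatim the derivation of~\eqref{Q_0} in the case $d=0$. Once the first identity is in hand the substitution is purely formal, since $(1+(q-1))^{k}=q^{k}$ term by term.

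First I would fix the combinatorial meaning of $Q_d(q,x)=\varphi(\Q_d)$. The grammar $\Q_d=(\U(\U\Path_d\D)^{*}\D)^{*}$ enumerates exactly the words over $\{\U,\U',\D\}$ that become Dyck paths when $\U'$ is read as contributing $d-1$ to the height and in which every $\U'$ begins at height at least two, the nested pair of $\U$'s opening each block forcing this height condition. Applying $\varphi$, with $\U\mapsto x$, $\D\mapsto 1$ and $\U'\mapsto qx^{d+1}$, a word with $m$ occurrences of $\U'$ receives the weight $q^{m}x^{\ell}$, where $\ell$ is the semilength of the ordinary Dyck path obtained by expanding each $\U'$ back to $\D\D\U^{d+1}$ (each such factor contributing its $d+1$ up-steps). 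Reading $\U'$ as the factor $\D\D\U^{d+1}$ thus sets up a bijection between these words and pairs $(\rho,S)$, where $\rho$ is an ordinary Dyck path and $S$ is a chosen subset of its $\D\D\U^{d+1}$ factors; hence $Q_d(q,x)$ is the generating function for such marked Dyck paths, weighted by $q^{\#S}x^{|\rho|}$.

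Summing over all subsets $S$ for a fixed $\rho$ then contributes a factor $(1+q)^{\D\D\U^{d+1}(\rho)}$, since each of the $\D\D\U^{d+1}(\rho)$ factors is independently either left unmarked, contributing $1$, or marked, contributing $q$. This produces $Q_d(q,x)=\sum_{\rho}(1+q)^{\D\D\U^{d+1}(\rho)}x^{|\rho|}$, and the substitution $q\mapsto q-1$ finishes the argument. The point needing care---and the main obstacle---is to confirm that $(\rho,S)\mapsto\rho'$ really is a bijection onto the language described by $\Q_d$. This rests on two observations. First, any two $\D\D\U^{d+1}$ factors of a Dyck path are non-overlapping: a factor can begin only with $\D\D$, whereas every position strictly inside a factor beyond its second step carries a $\U$, so no factor can start within another; this is what legitimizes marking the factors of $S$ independently. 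Second, a factor $\D\D\U^{d+1}$ fits at a given position precisely when the path has height at least two there, so that the two descents keep the height nonnegative, which is exactly the height-at-least-two condition that $\Q_d$ imposes on $\U'$. Finally I would remark that in the degenerate case $d=1$ the symbol $\U'$ stands for $\U^{0}$ and the underlying objects are Motzkin rather than Dyck paths; since the grammar and the evaluation $\varphi$ are uniform in $d$, the argument applies without change.
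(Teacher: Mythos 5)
Your proof is correct and takes essentially the same approach as the paper: the paper also establishes $Q_d(q,x)=\sum_{\rho}(1+q)^{\D\D\U^{d+1}(\rho)}x^{|\rho|}$ via the marked-path encoding over $\{\U,\U',\D\}$ and then substitutes $q\mapsto q-1$. Your explicit checks that distinct $\D\D\U^{d+1}$ factors cannot overlap and that the height-at-least-two condition on $\U'$ matches the validity of the expanded factor are details the paper leaves implicit, and they are verified correctly.
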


By combining Lemma~\ref{sigma_iff_factor} and
Proposition~\ref{factor_distrib} we arrive at the desired generating
function for $213$-avoiding $d$-Fishburn permutations.

\begin{theorem}\label{Fd213}
  For any $d\geq 0$,
  
  \eqqed{
    \sum_{\pi\in\F_d(213)} x^{|\pi|} \,=\, Q_d(-1, x).
  }
\end{theorem}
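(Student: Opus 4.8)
The plan is to assemble the pieces already established in this section into a short chain of equalities. First I would invoke the preceding proposition, which identifies $\F_d(213)$ with $\Sym(\si_d,213)$, so that the left-hand side of the theorem becomes $\sum_{\pi\in\Sym(\si_d,213)}x^{|\pi|}$. Since the standard bijection $\phi$ maps $\Sym_n(213)$ bijectively onto the Dyck paths of semilength $n$ and is size-preserving, this sum can be transported to the Dyck-path side: it equals the sum of $x^{|\rho|}$ over those Dyck paths $\rho$ that arise as images under $\phi$ of $\si_d$-avoiding permutations.

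Next I would translate the avoidance condition across $\phi$. By Lemma~\ref{sigma_iff_factor}, a permutation $\pi\in\Sym_n(213)$ contains $\si_d$ if and only if $\rho=\phi(\pi)$ contains $\D\D\U^{d+1}$ as a factor; equivalently, $\pi$ avoids $\si_d$ exactly when $\D\D\U^{d+1}(\rho)=0$. Combined with the bijectivity of $\phi$, this yields
$$
\sum_{\pi\in\Sym(\si_d,213)} x^{|\pi|} \;=\; \sum_{\rho:\,\D\D\U^{d+1}(\rho)=0} x^{|\rho|},
$$
where the sum on the right ranges over all Dyck paths having no $\D\D\U^{d+1}$ factor.

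Finally I would extract this factor-free sum from the full distribution computed in Proposition~\ref{factor_distrib}. Specializing the identity $\sum_{\rho} q^{\D\D\U^{d+1}(\rho)}\,x^{|\rho|} = Q_d(q-1,x)$ at $q=0$ and using the convention $0^0=1$, every term indexed by a path carrying at least one $\D\D\U^{d+1}$ factor is annihilated, while each factor-free path survives with coefficient $1$. Hence
$$
\sum_{\rho:\,\D\D\U^{d+1}(\rho)=0} x^{|\rho|} \;=\; Q_d(0-1,x) \;=\; Q_d(-1,x).
$$
Chaining the two displayed equalities with the reduction of the first paragraph gives the claimed formula.

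Because all of the genuine combinatorial content has been front-loaded into the bijection $\phi$, Lemma~\ref{sigma_iff_factor}, and Proposition~\ref{factor_distrib}, I do not expect any real obstacle in this final step; it is essentially bookkeeping. The one point requiring a moment's care is the $q=0$ specialization, where one must be sure that the $0^0=1$ convention is precisely what isolates the paths avoiding the factor, rather than, say, the $q=-1$ evaluation that computes a signed count.
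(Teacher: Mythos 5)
Your proposal is correct and follows exactly the route the paper takes: identify $\F_d(213)$ with $\Sym(\si_d,213)$ via the preceding proposition, transport across the bijection $\phi$ using Lemma~\ref{sigma_iff_factor}, and specialize Proposition~\ref{factor_distrib} at $q=0$ to isolate the $\D\D\U^{d+1}$-factor-free Dyck paths, yielding $Q_d(-1,x)$. Your closing remark about the $q=0$ evaluation is also the right one--setting $q=0$ in $Q_d(q-1,x)$ kills every path with at least one marked factor and retains the factor-free paths with coefficient $1$, which is precisely how the paper reads off the answer.
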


For a fixed small $d$ one can derive an explicit expression for
$Q_d(-1, x)$ by solving the corresponding system of functional
equations. We have done so for $d\leq 2$:
\begin{align*}
  Q_0(-1,x) &\,=\, \frac{1-x}{1-2x}; \\[2ex]
  Q_1(-1,x) &\,=\, \frac{2(1 - x)}{1 - 2x + x^2 + \sqrt{1 - 4x + 2x^2 + x^4}}; \\[2ex]
  Q_2(-1,x) &\,=\, \frac{2(1 - x)}{1 - 2x + 2x^2 + \sqrt{1-4x+ 4x^3}}.
\end{align*}
Since $\F_d(213)=\Sym(\si_d,213)$ and $\#\Sym_n(213)=C_n$,
the $n$th Catalan number, we find that the sequence of series
$\{Q_d(-1,x)\}_{d\geq 0}$ converges to the generating function for the
Catalan numbers:
\[
  \lim_{d\to\infty} Q_d(-1,x) = \frac{2}{1+\sqrt{1-4x}}
\]
The coefficient of $x^n$ in $Q_0(-1,x)$ is $2^{n-1}$ for
$n\geq 1$, and hence one might say that the coefficients in $Q_d(-1,x)$
``interpolate'' between $2^{n-1}$ and $C_n$; in Table~\ref{table-F213}
we list the first few coefficients of $Q_d(-1,x)$ for $d\leq 5$.
\begin{table}
\[
\begin{array}{c|rrrrrrrrrrrrr}
  d\hspace{2pt}\backslash\hspace{1pt} n \rule[-0.9ex]{0pt}{0pt}
  & 0 & 1 & 2 & 3 & 4 & 5 & 6 & 7 & 8 & 9 & 10 & 11 & 12 \\
  \hline \rule{0pt}{2.6ex}
  0 & 1 & 1 & 2 & 4 & 8 & 16 & 32 & 64 & 128 & 256 & 512 & 1024 & 2048  \\
  1 & 1 & 1 & 2 & 5 & 13 & 35 & 97 & 275 & 794 & 2327 & 6905 & 20705 & 62642 \\
  2 & 1 & 1 & 2 & 5 & 14 & 41 & 124 & 384 & 1212 & 3885 & 12614 & 41400 & 137132 \\
  3 & 1 & 1 & 2 & 5 & 14 & 42 & 131 & 420 & 1375 & 4576 & 15434 & 52639 & 181230 \\
  4 & 1 & 1 & 2 & 5 & 14 & 42 & 132 & 428 & 1420 & 4796 & 16432 & 56966 & 199448 \\
  5 & 1 & 1 & 2 & 5 & 14 & 42 & 132 & 429 & 1429 & 4851 & 16718 & 58331 & 205632
\end{array}
\]
\caption{Number of $213$-avoiding $d$-Fishburn permutations of length $n$.}
\label{table-F213}
\end{table}

The transport of patterns between Fishburn permutations and modified
ascent sequences developed by the first two authors~\cite{CC:tpb}
applies to $d$-Fishburn permutations and modified $d$-ascent sequences as well.
Call two Cayley permutations~$\al$ and~$\be$  {\em equivalent} if
$\burget(\al)=\burget(\be)$, and let $[\Cay]$ denotes the set of equivalence classes over~$\Cay$ defined this way. Moreover, an element~$[\al]$
of~$[\Cay]$ contains~$[\rho]$ if~$\al'$ contains~$\rho'$ for
some~$\al'\in[\al]$ and~$\rho'\in[\rho]$.
We denote by
$[\Cay][\rho]$ the
set of classes that avoid~$[\rho]$. 
By the transport theorem on equivalence classes of Cayley
permutations~\cite[Theorem~4.9]{CC:tpb}, the Burge transpose induces
a bijection
$$
\burget:[\Cay][\rho]\to\Sym\bigl(\burget(\rho)\bigr).
$$
Since each equivalence
class contains at most one modified ascent sequence
and~$\burget(\kModasc{0})=\F_0$, we obtain a size-preserving bijection
$$
\burget:\kModasc{0}[\rho]\to\F_0\bigl(\burget(\rho)\bigr),
$$
where $\kModasc{0}[\rho]$ is the set of modified ascent sequences
avoiding every pattern in~$[\rho]$. Equivalently~\cite[Theorem~5.1]{CC:tpb},
for every permutation~$\tau$ we have a size-preserving bijection
$$
\burget:\kModasc{0}(B_{\tau})\to\F_0(\tau),
$$
where $B_{\tau}=[\tau^{-1}]$ is the Fishburn basis of $\tau$.
A constructive procedure to compute~$B_{\tau}$ was given in the
same reference.

Now we have proved in Proposition~\ref{t-inj} that the map~$\burget$
is injective on $\dModasc$ for every $d\ge 0$. Therefore, each
equivalence class of Cayley permutations contains at most one
modified $d$-ascent sequence. Since $\burget(\dModasc)=\F_d$,
we obtain the following transport theorem.

\begin{theorem}\label{transport-thm}
For any $d\ge 0$ and permutation~$\tau$,
$$
\burget:\dModasc(B_{\tau})\longrightarrow\F_d(\tau)
$$
is a size-preserving bijection, where $B_{\tau}$ is the Fishburn basis
of $\tau$, $\dModasc(B_{\tau})$ is the set of modified $d$-ascent
sequences avoiding every pattern in $B_{\tau}$, and $F_d(\tau)$ is the
set of $d$-Fishburn permutations avoiding~$\tau$. In particular,

\eqqed{
\#\F_{d,n}(\tau)=\#\Modasc_{d,n}(B_{\tau}).
}
\end{theorem}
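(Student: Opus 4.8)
The plan is to reduce the claim to the transport machinery for Cayley permutations established in~\cite{CC:tpb}, which is insensitive to $d$; the only $d$-dependent inputs are the identity of the canonical class representatives and their image, and both are already available. Concretely, I would run the argument that takes one from the class-level statement \cite[Theorem~4.9]{CC:tpb} to the representative-level statement \cite[Theorem~5.1]{CC:tpb}, substituting $\dModasc$ for $\kModasc{0}$ and $\F_d$ for $\kModasc{0}$'s classical target $\F_0$.

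First I would record the unrestricted bijection. Since $\dModasc=\dhat(\dA)$, Corollary~\ref{burget_bij} together with Theorem~\ref{act_iff} shows that $\burget$ maps $\dModasc$ bijectively onto $\img_d=\F_d$. This uses exactly two properties: that $\burget$ is injective on $\dModasc$ (Proposition~\ref{t-inj}), so that every equivalence class of Cayley permutations contains \emph{at most one} modified $d$-ascent sequence; and that $\burget(\dModasc)=\F_d$ (Theorem~\ref{act_iff}). For $d=0$ these are precisely the two facts that promote the class-level bijection to a statement about modified ascent sequences, and I would now use them in the same role.

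Next I would invoke \cite[Theorem~4.9]{CC:tpb}: the Burge transpose induces a bijection $\burget\colon[\Cay][\rho]\to\Sym(\burget(\rho))$ on equivalence classes, for every Cayley pattern $\rho$. Specializing to $\rho=\tau^{-1}$ and using $\burget(\tau^{-1})=(\tau^{-1})^{-1}=\tau$ gives a bijection $[\Cay][\tau^{-1}]\to\Sym(\tau)$. Because each class meeting $\dModasc$ has a unique modified $d$-ascent representative $\alh$, with $\burget(\alh)\in\F_d$, the previous paragraph identifies the classes that meet $\dModasc$, via their representative, with $\F_d$. What remains is to match the two notions of avoidance: for $\alh\in\dModasc$ I must show that $\alh$ avoids every pattern in $B_\tau=[\tau^{-1}]$ if and only if the class $[\alh]$ avoids $[\tau^{-1}]$, the latter being equivalent by the transport theorem to $\burget(\alh)\in\F_d(\tau)$.

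The main obstacle is this equivalence of avoidances. One direction is immediate: if $[\alh]$ avoids $[\tau^{-1}]$ then so does its member $\alh$, since $\alh\in[\alh]$. The hard direction is the converse, namely that $\alh$ avoiding every pattern in $B_\tau$ forces $\burget(\alh)$ to avoid $\tau$; in contrapositive form, an occurrence of $\tau$ in $\burget(\alh)$ must be transported back to an occurrence of some basis pattern in the \emph{single} Cayley permutation $\alh$. This is exactly the single-occurrence transport that underlies the passage from \cite[Theorem~4.9]{CC:tpb} to \cite[Theorem~5.1]{CC:tpb}, and that argument refers only to $\alh$ being the unique modified ($d$-)ascent sequence of its class with image in the target class---never to $d$ otherwise. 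Hence the same reasoning yields that $\burget$ restricts to a bijection $\dModasc(B_\tau)\to\F_d(\tau)$, and since $\burget$ preserves length the enumerative identity $\#\F_{d,n}(\tau)=\#\Modasc_{d,n}(B_\tau)$ drops out immediately.
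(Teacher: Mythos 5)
Your proposal is correct and follows essentially the same route as the paper: the paper also derives the theorem by combining the class-level transport theorem \cite[Theorem~4.9]{CC:tpb} (and its representative-level form \cite[Theorem~5.1]{CC:tpb}) with exactly the two $d$-dependent inputs you identify, namely injectivity of $\burget$ on $\dModasc$ (Proposition~\ref{t-inj}), which guarantees each equivalence class contains at most one modified $d$-ascent sequence, and the equality $\burget(\dModasc)=\F_d$ (Theorem~\ref{act_iff}). Your write-up merely makes explicit the avoidance-matching step that the paper leaves implicit in its appeal to the $d=0$ argument of~\cite{CC:tpb}.
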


For instance, $B_{213}=\{112,213\}$ and by combining Theorems~\ref{Fd213}
and \ref{transport-thm} we get the following result.

\begin{corollary}
  For any $d\geq 0$,
  
  \eqqed{
    \sum_{\alpha\in\dModasc(112,213)} x^{|\alpha|} \,=\, Q_d(-1, x).
  }
\end{corollary}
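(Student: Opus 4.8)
The plan is to obtain this corollary as an immediate consequence of the transport theorem, Theorem~\ref{transport-thm}, specialized to $\tau=213$, together with the generating function already computed in Theorem~\ref{Fd213}. The only ingredient external to those two results is the Fishburn basis $B_{213}$, and this is supplied by the text preceding the corollary.

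First I would record that $B_{213}=\{112,213\}$. Recall that $B_\tau=[\tau^{-1}]$ and that $213$ is an involution, so $B_{213}=[213]$; the constructive procedure for Fishburn bases from~\cite{CC:tpb} identifies this equivalence class as $\{112,213\}$. With this basis in hand, Theorem~\ref{transport-thm} applied to $\tau=213$ furnishes a size-preserving bijection
\[
\burget:\dModasc(112,213)\longrightarrow\F_d(213).
\]

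Second, because this bijection preserves size, it restricts to a bijection between the length-$n$ components for every $n$, whence $\#\Modasc_{d,n}(112,213)=\#\F_{d,n}(213)$. Summing $x^n$ against these equal cardinalities yields the identity of ordinary generating functions
\[
\sum_{\alpha\in\dModasc(112,213)}x^{|\alpha|}=\sum_{\pi\in\F_d(213)}x^{|\pi|}.
\]
Finally I would invoke Theorem~\ref{Fd213}, which evaluates the right-hand side as $Q_d(-1,x)$, completing the argument.

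I do not anticipate a genuine obstacle here, since both constituent results are already established; the corollary is essentially a bookkeeping step. The one point worth stating explicitly is that it is precisely the \emph{size-preserving} nature of the Burge-transpose bijection that licenses passing from equality of cardinalities in each degree to equality of the full generating functions. If any verification deserves care, it is the claim $B_{213}=\{112,213\}$, but this is routine via the procedure of~\cite{CC:tpb} and is already asserted in the surrounding discussion.
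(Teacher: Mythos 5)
Your proposal is correct and follows exactly the paper's own route: the paper deduces this corollary in one line by noting $B_{213}=\{112,213\}$ and combining Theorem~\ref{Fd213} with Theorem~\ref{transport-thm}. Your write-up simply makes explicit the bookkeeping (size-preservation giving equality of coefficients degree by degree) that the paper leaves implicit.
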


It would be interesting to make a deeper study of pattern avoidance in $d$-Fishburn permutations
and (modified) $d$-ascent sequences.

\section{Final remarks}\label{section_final}

It would be desirable to have a better understanding of $\Modinv$.
Computer calculations show that the first few terms of
the sequence $|\Modinv_n|$, starting from $n=0$, are
$$
1,1,3,10,43,224,1396,10136,84057.
$$
We also recall the open problem from Section~\ref{section_modinv}.
\begin{problem}
	Find a characterization of which Cayley permutations lie in $\Modinv$,
	perhaps similar to that of $\kAscseq{0}$ in equation~\eqref{eq_modasc_char}.
\end{problem}

There are many properties of the bijection $\maxhat$ which remain
to be investigated.
In Section~\ref{sec_subd}, we characterized the image of $\Ascseq_0$
under this map. It is natural to ask which sets of permutations are obtained
by restricting $\maxhat$ to the set $\kAscseq{0}(p)$ of ascent sequences which
avoid a pattern $p$. In this regard, we have several conjectures.
\begin{conjecture}
The map $\maxhat$ restricts to the following bijections.
\begin{enumerate}
    \item $\Ascseq_0(123)\longrightarrow\Sym(123,213)$,
    \item $\Ascseq_0(112)\longrightarrow\Sym(213,312)$,
    \item $\Ascseq_0(121)\longrightarrow\Sym(213,231)$,
    \item $\Ascseq_0(213)\longrightarrow\Sym(213,45123)$.
\end{enumerate}
\end{conjecture}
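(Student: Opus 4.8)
The plan is to make the bijection explicit by first inverting $\maxhat$. From the recursion $\maxhat(\be a)=\maxhat(\be)^+a$ (Lemma~\ref{lemma_nhat}) one shows by a short induction on length that, writing $\pi=\maxhat(\al)=p_1\ldots p_n$, the entries of $\al$ are recovered as
$$
a_i=1+\#\{\,j<i : p_j<p_i\,\}\qquad(1\le i\le n);
$$
indeed the value sitting in position $i$ is incremented exactly once for each later insertion that lands below it, so its final value is $p_i=a_i+\#\{j>i:p_j<p_i\}$, and rearranging gives the displayed formula. Thus $\maxhat^{-1}$ sends a permutation to its sequence of ``left-smaller counts'', and since $\maxhat(\Ascseq_0)=\irsub$ (Theorem~\ref{subD_iff}) it suffices, for each pattern $p$ in the list, to translate ``$\al$ contains $p$'' into a containment condition on $\pi$ and to check that the resulting forbidden permutations are exactly the ones named.

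Case (a) is a clean model for this. Since $a_i=1+\#\{j<i:p_j<p_i\}$, one has $a_i\le 2$ for all $i$ if and only if no entry of $\pi$ has two smaller entries to its left, which is precisely avoidance of both $123$ and $213$. It therefore remains to identify $\Ascseq_0(123)$ with the set of (ascent) sequences whose entries are at most $2$. Any inversion sequence with entries in $\{1,2\}$ is automatically an ascent sequence (for $i\ge 2$ we have $\asc\al_{i-1}\ge 1$), and clearly avoids $123$. Conversely, if $\al\in\Ascseq_0$ has a smallest index $i$ with $a_i\ge 3$, then $\asc\al_{i-1}\ge 2$, so besides the forced ascent at position $1$ there is an ascent at some $j\le i-1$ with $a_{j-1}=1<a_j=2$; then $a_{j-1}a_ja_i$ is a $123$ pattern. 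Hence $\Ascseq_0(123)=\{\al:a_i\le 2\}$ and $\maxhat$ restricts to the bijection of (a).

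For (b) and (c) I would follow the same two-step recipe but now through a structural description of the target classes: $\Sym(213,312)$ consists of the permutations in which every entry is a left-to-right or a right-to-left maximum, while $\Sym(213,231)$ consists of those in which, for every triple of positions, the leftmost entry is the smallest or the largest of the three. The task is to match these with the structural descriptions of $\Ascseq_0(112)$ and $\Ascseq_0(121)$ and to verify compatibility with a single rightmost insertion: since $\maxhat(\be a)=\maxhat(\be)^+a$, both trees are generated by the same insertion process, so it is enough to prove, by induction on length, the per-step equivalence ``$\be a$ avoids $p$'' $\iff$ ``$\maxhat(\be)^+a$ avoids the paired patterns''. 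The friction here is that the recursion of $\maxhat$ peels off the \emph{last} letter of $\al$, whereas the natural descriptions of $\Sym(213,312)$ and $\Sym(213,231)$ are phrased in terms of early entries of $\pi$; reconciling the two will require recasting those classes recursively from the right.

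The real obstacle is case (d). Unlike the others, $\#\Ascseq_0(213)$ is not $2^{n-1}$ but a genuinely richer sequence, and the target involves the length-five pattern $45123$ alongside $213$. Consequently there is no hope of reducing ``$\al$ avoids $213$'' to a bound on a single statistic; the left-smaller-count formula shows that a $213$ in $\al$ corresponds to an intricate configuration in $\pi$, and deciding whether a rightmost insertion creates a $45123$ depends on non-local information about $\pi$. My plan would be to prove the equivalence by induction while carrying an auxiliary invariant that records just enough of the increasing-run and minimum structure of $\pi$ to determine both when a new rightmost entry completes a $45123$ and when the corresponding insertion completes a $213$ in $\al$; identifying the right invariant, and showing it is preserved and sufficient, is where I expect the main difficulty—and the reason the statement is posed only as a conjecture—to lie.
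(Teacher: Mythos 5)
The statement you set out to prove is posed in the paper only as a conjecture: the paper contains \emph{no} proof of any of the four cases, so there is nothing on the paper's side to compare against. Judged on its own terms, your proposal makes real progress but does not prove the full statement. Your inversion formula for $\maxhat$ is correct: since $\maxhat(\be a)=\maxhat(\be)^+a$ (Lemma~\ref{lemma_nhat}) and the rescaling $\be\mapsto\be^+$ preserves the relative order of existing entries, the entry in position $i$ is incremented at a later step $k$ exactly when it ends up above $p_k$, giving $p_i=a_i+\#\{k>i : p_k<p_i\}$, and hence $a_i=1+\#\{j<i : p_j<p_i\}$ because $\pi$ is a permutation of $[n]$. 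Your case (a) is then complete and correct: by this formula and Corollary~\ref{HatMatCor}, the condition $a_i\le 2$ for all $i$ translates exactly into ``no entry of $\pi$ has two smaller entries to its left'', which is avoidance of $123$ and $213$; and your identification $\Ascseq_0(123)=\{\al\in\I : \max\al\le 2\}$ is proved soundly in both directions (the minimal-index argument producing a $123$ from an entry $\ge 3$ correctly uses the paper's convention that position $1$ is always an ascent). So you have actually settled part (a), which is more than the paper does.

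The genuine gap is that cases (b), (c) and (d) remain plans rather than proofs. For (b) and (c) your structural characterizations of the targets are correct ($\Sym(213,312)$ consists of permutations in which every entry is a left-to-right or right-to-left maximum, and $\Sym(213,231)$ of those in which the leftmost entry of any triple is extreme among the three), but the decisive step --- the per-insertion equivalence ``$\be a$ avoids the pattern if and only if $\maxhat(\be)^+a$ avoids the paired patterns'' --- is never carried out, and you yourself name the obstruction (the recursion peels off the \emph{last} letter of $\al$ while your characterizations constrain \emph{early} entries of $\pi$) without resolving it. Note also that, unlike in case (a), avoidance of $112$ or $121$ is not equivalent to a bound on $\max\al$, so the clean translation through the left-smaller-count formula has no direct analogue there; some new invariant really is required. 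For (d) you offer only a statement of intent, with the key object (the auxiliary invariant tracking run and minimum structure) unidentified. As it stands, then, the proposal proves one quarter of the conjecture --- a worthwhile contribution, and the inversion formula for $\maxhat$ is a lemma of independent use --- but it does not establish the statement.
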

We note that the enumeration of $\kAscseq{0}(p)$, for
$p\in\{111,211,221,231,312\}$, is currently open.

One could also hope to find analogues of the characterization of
$\maxhat(\Ascseq_0)$ in terms of ir-subdiagonal permutations for
larger $d$.
\begin{question}
    What can we say about $\maxhat(\dA)$, for $d>0$?
    Since $\kAscseq{0}\subseteq\dA$, can we describe $\maxhat(\dA)$
    by a similar notion of subdiagonality?
\end{question}

The approach adopted in Section~\ref{sec_subd} can be generalized as
follows. Let $U\subseteq\I$ be any subset of $\I$. Given any $\al\in U$,
choose uniquely a nonnegative integer $d_{\al}$, with $d_{\al}\ge\mind\al$.
By Proposition~\ref{Hset_disj}, we obtain an injection
\begin{align*}
\{(\al,d_{\al})\}_{\al\in U}\;&\longrightarrow\;\Modinv\\
(\al,d_{\al})\;&\longmapsto\;\khat{d_{\al}}(\al).
\end{align*}
What other choices of $U$ and $d_{\al}$ give interesting examples?
A natural choice consists in using $d_{\al}=\mind\al$. Can we
describe the corresponding subset of $\Modinv$?
Conversely, what sets of permutations $T\subseteq\Sym$ can
be pulled back to interesting sets of pairs
$\{(\al,d_{\al})\}_{\al\in U}$?

\medskip

{\bf Acknowledgment.}  We would like to thank Robin D.~P.~Zhou and two anonymous referees for their careful reading of the paper and for pointing out a number of typographical errors.

\nocite{*}
\bibliographystyle{alpha}
\newcommand{\etalchar}[1]{$^{#1}$}

\end{document}